\numberwithin{equation}{section}
\newtheorem{Theorem}{Theorem}[section]
\newtheorem*{Theorem*}{Theorem}
\newtheorem*{TheoremA}{Theorem~A}
\newtheorem*{TheoremB}{Theorem~B}
\newtheorem{Corollary}[Theorem]{Corollary}
\newtheorem{Lemma}[Theorem]{Lemma}
\newtheorem{Proposition}[Theorem]{Proposition}
 { \theoremstyle{definition}
\newtheorem{Definition}[Theorem]{Definition}

\newtheorem{Remark}[Theorem]{Remark} }
\begin{document}

\allowdisplaybreaks

\newcommand{\arXivNumber}{2212.01695}

\renewcommand{\PaperNumber}{067}

\FirstPageHeading

\ShortArticleName{Real Slices of ${\rm SL}(r,\mathbb{C})$-Opers}

\ArticleName{Real Slices of $\boldsymbol{{\rm SL}(r,\mathbb{C})}$-Opers}

\Author{Indranil BISWAS~$^{\rm a}$, Sebastian HELLER~$^{\rm b}$ and Laura P.~SCHAPOSNIK~$^{\rm c}$}

\AuthorNameForHeading{I.~Biswas, S.~Heller and L.P.~Schaposnik}

\Address{$^{\rm a)}$~Department of Mathematics, Shiv Nadar University,\\
\hphantom{$^{\rm a)}$}~NH91, Tehsil Dadri, Greater Noida, Uttar Pradesh 201314, India}
\EmailD{\href{mailto:indranil.biswas@snu.edu.in}{indranil.biswas@snu.edu.in}}

\Address{$^{\rm b)}$~Beijing Institute of Mathematical Sciences and Applications, Beijing 101408, P.R.~China}
\EmailD{\href{mailto:sheller@bimsa.cn}{sheller@bimsa.cn}}

\Address{$^{\rm c)}$~Department of Mathematics, Statistics, and Computer Science,\\
\hphantom{$^{\rm c)}$}~University of Illinois at Chicago, 851 S Morgan St, Chicago, IL 60607, USA}
\EmailD{\href{mailto:schapos@uic.edu}{schapos@uic.edu}}

\ArticleDates{Received April 12, 2023, in final form September 05, 2023; Published online September 16, 2023}

\Abstract{Through the action of an anti-holomorphic involution $\sigma$ (a real structure) on a~Riemann surface $X$, we consider the induced actions on ${\rm SL}(r,\mathbb{C})$-opers and study the real slices fixed by such actions. By constructing this involution for different descriptions of the space of ${\rm SL}(r,\mathbb{C})$-opers, we are able to give a natural parametrization of the fixed point locus via differentials on the Riemann surface, which in turn allows us to study their geometric properties.}

\Keywords{opers; real structure; differential operator; anti-holomorphic involution; real slice}

\Classification{14H60; 33C80; 53A55}

\section{Introduction}\label{intro.}

Anti-holomorphic involutions on Riemann surfaces have long been studied, and their induced
actions on associated moduli spaces (such as the moduli space of vector bundles or Higgs
bundles on them) have led to very fruitful results. In particular, by considering the
induced actions on those spaces, one can consider the special subsets of fixed points,
often leading to interesting Lagrangians of the moduli spaces (in the case of Higgs
bundles, see, for instance, \cite{real1,real2}).

The space of ${\rm SL}(r,\mathbb{C})$-opers on $X$, which we denote by ${\rm OSL}_X(r)$, admits several
diverse, but equivalent, descriptions. In the present paper, starting with an anti-holomorphic involution $\sigma$ (a real
structure) on a compact Riemann surface $X$, we construct the induced action of ${\mathbb Z}/2\mathbb Z$
on various spaces associated to $X$:
\begin{itemize}\itemsep=0pt
\item an involution $F$ on the space of projective structures on $X$ (see Lemma \ref{lemmae8}),
\item an involution $\mathcal A_1$ induced by $F$ on the space of projective structures on $X$ times the space of holomorphic differentials \eqref{d4},
\item an involution $\beta$ on
the space of holomorphic differential operators (see Lemma \ref{lem3}),
\item an involution $\mathcal B$ on the space of vector bundles with connections (see Lemma \ref{bigb}),
\item and an involution $\Gamma$ on the
space of equivariant projective embeddings of the universal cover (see Lemma \ref{newinvo}).
\end{itemize}
The fixed point locus in ${\rm OSL}_X(r)$ for the action of ${\mathbb Z}/2\mathbb Z$ on it will be called the
real slice of ${\rm SL}(r,{\mathbb C})$-opers.

Along the paper, we first describe the induced involution from the point of view of just one description of
${\rm OSL}_X(r)$.
Then the space ${\rm OSL}_X(2)$ of ${\rm SL}(2,{\mathbb C})$-opers on $X$ is identified with the space of all
projective structures on $X$. The anti-holomorphic involution $\sigma$ of $X$ produces
an involution of the space of projective structures on $X$. Also $\sigma$ produces a conjugate linear involution of
$H^0\bigl(X, K^{\otimes i}_X\bigr)$ for every $i \geq 0$ by sending any $\omega \in H^0\bigl(X, K^{\otimes i}_X\bigr)$
to $\sigma^*\overline{\omega}$. On the other hand, we have an isomorphism
\[
\Psi\colon\ {\rm OSL}_X(r) \longrightarrow {\rm OSL}_X(2)\times \left(\bigoplus_{i=3}^r H^0\bigl(X, K^{\otimes i}_X\bigr)\right).
\]

\begin{TheoremA}
The above isomorphism $\Psi$ takes the involution of ${\rm OSL}_X(r)$ given by $\sigma$ to the
involution of ${\rm OSL}_X(2)\times \bigl(\bigoplus_{i=3}^r H^0\bigl(X, K^{\otimes i}_X\bigr)\bigr)$ given by the diagonal action of
${\mathbb Z}/2\mathbb Z$ on the product space. In particular, an element
\[
\left(\eta, \bigoplus_{i=3}^r \omega_i\right) \in
{\rm OSL}_X(2)\times \left(\bigoplus_{i=3}^r H^0\bigl(X, K^{\otimes i}_X\bigr)\right)
\]
is in the real slice of ${\rm SL}(r,{\mathbb C})$-opers
if and only if $\eta$ is in the real slice of ${\rm SL}(2,{\mathbb C})$-opers and
$\omega_i = \sigma^*\overline{\omega}_i$ for all $3 \leq i \leq r$.
\end{TheoremA}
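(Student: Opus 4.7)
The plan is to prove Theorem~A by exploiting the naturality of the isomorphism $\Psi$ with respect to pullback, together with the fact that the antiholomorphicity of $\sigma$ forces the pullback of a holomorphic differential to be complex conjugated. This reduces the statement to a compatibility check on each factor of the product decomposition on the right-hand side.

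First, I would recall that $\Psi$ is obtained by writing an ${\rm SL}(r,\mathbb{C})$-oper in its canonical normal form relative to the underlying projective structure $\eta$ determined by the associated ${\rm SL}(2,\mathbb{C})$-oper. Concretely, with respect to a local projective coordinate $z$ for $\eta$, the oper corresponds to a scalar differential operator $\partial_z^r+\sum_{i=2}^r q_i(z)\partial_z^{r-i}$, in which $q_2$ is absorbed into the data of $\eta$ and the remaining coefficients globalize to holomorphic differentials $\omega_i := q_i(z)({\rm d}z)^i \in H^0\bigl(X,K_X^{\otimes i}\bigr)$ for $3 \le i \le r$. The assignment sending an oper to $(\eta,\omega_3,\ldots,\omega_r)$ is precisely $\Psi$, and its canonicity follows from the uniqueness of this normal form relative to a chosen projective structure.

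Second, I would verify compatibility with $\sigma$ factor by factor. For the ${\rm SL}(2,\mathbb{C})$-factor, the underlying projective structure of the $\sigma$-transform of an oper is by construction $F(\eta)$, so the induced action on the first factor coincides with the action of $F$ from Lemma~\ref{lemmae8}. For the higher-differential factors a local coordinate computation suffices: if $w=\sigma(z)$ then $\overline{w}$ is a local projective coordinate for $F(\eta)$, and the change-of-coordinates rule for scalar differential operators converts each $q_i(w)({\rm d}w)^i$ into $\overline{q_i(\sigma(z))}\bigl({\rm d}\overline{\sigma(z)}\bigr)^i$, which is exactly the value of $\sigma^*\overline{\omega_i}$. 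Hence $\Psi$ intertwines the two involutions as claimed.

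The main obstacle, I expect, will be keeping the bookkeeping straight for the coefficient $q_2$: since $q_2$ is not among the free parameters on the right-hand side but is instead encoded in the projective structure $\eta$, one must verify that the Schwarzian-type contribution arising from the change of projective coordinate under $\sigma$ does not leak into the higher coefficients $q_i$ for $i \ge 3$. This decoupling should follow from the very construction of the oper normal form, which is designed precisely to separate the ${\rm SL}(2,\mathbb{C})$-data from the higher-differential data. Granted this, the final ``in particular'' statement is immediate, since a point is fixed by a diagonal involution on a product if and only if each of its coordinates is fixed individually.
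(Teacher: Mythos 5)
Your overall strategy (local normal form of the scalar operator relative to projective coordinates, then a factor-by-factor conjugation check) is a legitimate and genuinely different route from the paper's, which defines $\Psi$ intrinsically --- extracting $\delta_0$ from the cube of the $(r+1)$-th root $S_\delta$ of the bidifferential section \eqref{sd} and the higher differentials as $\operatorname{trace}\bigl((\theta_\delta)^i\bigr)$ with $\theta_\delta = D - \operatorname{Sym}^{r-1}(D_0)'$ --- and then deduces equivariance from Theorem~\ref{prop2} together with the naturality of each of these constructions under $\mathcal B$. However, your argument as written has a concrete error precisely at the point you flagged as the ``main obstacle'' and then dismissed. The coefficients $q_i$ of the normal form $\partial_z^r + \sum_{i=2}^r q_i\,\partial_z^{r-i}$ do \emph{not} individually globalize to sections of $K_X^{\otimes i}$ for $i \ge 3$, even when $z$ ranges over projective coordinates for $\eta$. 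Already for $r=3$: under a coordinate change $z = z(w)$ the tensorial quantity is $q_3 - \tfrac12 q_2'$, not $q_3$; one computes $\tilde q_3 = q_3\,(z')^3 + q_2\, z' z''$, and the correction term $q_2\, z' z''$ does not vanish for a general M\"obius map (only for affine ones). So the $q_2$-data does ``leak'' into the higher coefficients, and the fix is to replace each $q_i$ by the standard corrected combination (a universal polynomial in $q_2,\dots,q_i$ and their derivatives with rational coefficients). Your conjugation argument then still goes through, because these universal combinations have real coefficients and hence commute with the operation $q \mapsto \overline{q\circ\sigma}$, but the step must be stated for the corrected quantities.

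A second, smaller gap: Theorem~A is a statement about the specific isomorphism $\Psi$ of \eqref{d2}, whose second component is built from traces of powers of $\theta_\delta$ as in \eqref{d3}, not from normal-form coefficients. Your parametrization and the paper's differ by universal polynomial re-coordinatizations of $\bigoplus_{i=3}^r H^0\bigl(X, K_X^{\otimes i}\bigr)$ (fibered over the first factor), so the equivariance of one implies that of the other for the same reality reason as above; but this identification, or at least the observation that it is immaterial, needs to be said. Your final reduction --- a point of a product is fixed by a diagonal involution iff each coordinate is fixed --- is of course correct and is exactly how the ``in particular'' clause follows in the paper as well (Corollary~\ref{corpr3}).
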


The above mentioned ${\mathbb Z}/2\mathbb Z$ actions produce involutions of ${\rm OSL}_X(r)$
from four different points of view, and we finish the paper by showing that all these different involutions of
${\rm OSL}_X(r)$ actually coincide, leading to our second main result.

\begin{TheoremB}
The four involutions $\Gamma$, $\beta$, $\mathcal B$ and $\mathcal A=\Psi^{-1}\circ\mathcal A_1\circ\Psi$ of
${\rm OSL}_X(r)$ coincide.
\end{TheoremB}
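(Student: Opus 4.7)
The plan is to exhibit all four involutions as shadows of one intrinsic operation---``pull back by $\sigma$ and apply complex conjugation''---expressed in four different coordinate systems on ${\rm OSL}_X(r)$. Since $\sigma$ is fixed once and for all and complex conjugation is functorial, the work reduces to checking that each of the canonical isomorphisms linking the descriptions intertwines the $\sigma$-pullback-plus-conjugation on the two sides. I would organize the argument as the chain of equalities
\[
\Gamma \;=\; \mathcal{B} \;=\; \beta \;=\; \mathcal{A},
\]
one link per canonical identification.

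To prove $\Gamma=\mathcal{B}$, I would use that a ${\rm PSL}(r,\mathbb{C})$-equivariant immersion $f\colon\widetilde X\to\mathbb{P}^{r-1}$ of the universal cover produces, via its osculating flag, a flat ${\rm SL}(r,\mathbb{C})$-bundle with full flag---that is, an oper in the bundle-with-connection description. Under $\Gamma$, the map $f$ is replaced by $\overline{f}\circ\widetilde\sigma$ for a lift $\widetilde\sigma$ of $\sigma$; because the osculating-flag construction is natural in both domain and target, the resulting flat bundle coincides with the $\sigma$-pullback and conjugation of the original, which is exactly $\mathcal{B}$. For $\mathcal{B}=\beta$, I would invoke the cyclic-vector equivalence identifying an ${\rm SL}(r,\mathbb{C})$-oper with a scalar order-$r$ holomorphic differential operator between appropriate powers of a theta characteristic on $X$; the $\sigma$-pullback of the connection composed with complex conjugation acts on the associated operator by coefficient-wise conjugation after coordinate substitution, which is the definition of $\beta$ in Lemma~\ref{lem3}.

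The hard part will be $\beta=\mathcal{A}$, and this is where I expect the bulk of the work. Here I would use the explicit form of $\Psi$: after fixing a projective structure on $X$, an oper decomposes as the base ${\rm SL}(2,\mathbb{C})$-oper (the projective structure itself) plus Schwarzian-type correction terms that are holomorphic differentials of weights $3,\ldots,r$. The projective-structure factor transforms by the involution $F$ of Lemma~\ref{lemmae8}, while each $\omega_i$ transforms by $\sigma^*\overline{\omega_i}$, so the combined action on the product side is exactly $\mathcal{A}_1$. What must be checked carefully is that the decomposition underlying $\Psi$ is genuinely canonical---depends only on the projective structure and on no further complex-linear datum---so that $\sigma$-pullback-plus-conjugation on the differential-operator side is transported term by term to the matching operation on ${\rm OSL}_X(2)\times\bigoplus_{i=3}^{r} H^0\bigl(X,K_X^{\otimes i}\bigr)$. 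Once this compatibility is verified in local coordinates adapted to the projective structure, Theorem~A is essentially the statement $\mathcal{A}=\beta$, and the chain of equalities closes.
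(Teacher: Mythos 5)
Your proposal follows essentially the same route as the paper: the paper establishes exactly the three pairwise links you describe, namely that the differential-operator involution agrees with the connection involution via the cyclic-vector/jet-bundle identification (Theorem~\ref{prop2}), that $\Gamma$ agrees with $\beta$ through the identification of $\mathcal{M}(r)$ with ${\rm OSL}_X(r)$ (Proposition~\ref{prop3}), and that $\mathcal{A}=\Psi^{-1}\circ\mathcal{A}_1\circ\Psi$ agrees with the others via the decomposition into a base ${\rm SL}(2,\mathbb{C})$-oper and differentials of weights $3,\dots,r$ (Proposition~\ref{lem4}), before combining them. The only discrepancy is notational and traceable to the paper itself: you follow the introduction's labelling in which $\beta$ acts on differential operators and $\mathcal{B}$ on vector bundles with connections, whereas in the body Lemma~\ref{lem3} defines $\beta$ on connections and Lemma~\ref{bigb} defines $\mathcal{B}$ on differential operators, so your middle link is the paper's Theorem~\ref{prop2} with the two names interchanged.
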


\section{Projective structures}

\subsection{Involution of the space of projective structures}

The standard action of the group ${\rm GL}(2,{\mathbb C})$ on ${\mathbb C}^2$ produces an action of
${\rm PGL}(2,{\mathbb C})$ on the complex projective line ${\mathbb C}{\mathbb P}^1$, giving
an identification of ${\rm PGL}(2,{\mathbb C})$ with the holomorphic automorphism group ${\rm Aut}
\bigl({\mathbb C}{\mathbb P}^1\bigr)$ of ${\mathbb C}{\mathbb P}^1$. Through this identification, one can define
a projective structure on an oriented $C^\infty$ surface in the following fashion.

Let $S$ be a compact oriented $C^\infty$ surface of genus $g$, with $g \geq 2$.
Recall that a
coordinate chart on $S$ is a pair $(U, \varphi)$, where $U \subset S$ is an open
subset and $\varphi\colon U \longrightarrow {\mathbb C}{\mathbb P}^1$ is an orientation
preserving $C^\infty$ embedding, and a coordinate atlas is a collection of coordinate charts $\{(U_i,
\varphi_i)\}_{i\in I}$ such that $\bigcup_{i\in I} U_i = S$.

\begin{Definition}
A {\it projective structure} on $S$ is an equivalence class of %coordinate atlases
$\{(U_i,\allowbreak \varphi_i)\}_{i\in I}$ such that for all $i, j \in I$, and every nonempty
connected component $U^c_{ij} \subset U_i\cap U_j$, there is an
element
\begin{equation}\label{tc}
T^c_{ij} \in {\rm PGL}(2,{\mathbb C})
\end{equation}
such that the restriction of the maps $\varphi_i\circ \varphi^{-1}_j$ to $\varphi_j(U^c_{ij})$
coincides with the restriction of the automorphism $T^c_{ij} \in {\rm Aut}\bigl({\mathbb C}{\mathbb P}^1\bigr)$. Two
coordinate atlases are called {\it equivalent} if
their union also satisfies the condition \eqref{tc} on the transition functions.
\end{Definition}

{\samepage Note that the condition \eqref{tc} above uniquely
determines the element $T^c_{ij}$. Also a projective structure on $S$ defines
a complex structure on $S$ because the automorphisms of ${\mathbb C}{\mathbb P}^1$ given by the
elements of ${\rm PGL}(2,{\mathbb C})$ are holomorphic.
In what follows we shall denote by
\begin{gather}\label{e0}
\widetilde{\mathcal P}(S)
\end{gather}
the space of all projective structures on $S$.}

A projective structure on a Riemann surface $X$ is a projective structure $P_0$ on the
underlying~$C^\infty$ surface $X_0$ such that the complex structure on $X_0$ given by $P_0$
coincides with the complex structure defining $X$. Consider the orientation reversing involution
\begin{align}
\sigma_0\colon\ {\mathbb C}{\mathbb P}^1 &\longrightarrow {\mathbb C}{\mathbb P}^1,\nonumber\\
\quad(z_1, z_2) &\longmapsto (\overline{z}_1, \overline{z}_2),\label{e1}
\end{align}
for $z_i \in \mathbb C$, and let
\begin{equation}\label{e2}
\sigma\colon\ S \longrightarrow S
\end{equation}
be an orientation reversing diffeomorphism of $S$ such that $\sigma\circ\sigma = {\rm Id}_S$.

\begin{Lemma}\label{invoproj}
Given a projective structure $P$ on $S$ defined by a coordinate atlas
\[
\{(U_i, \varphi_i)\}_{i\in I},\]
the
corresponding atlas
\[
\big\{\bigl(\sigma^{-1}(U_i), \sigma_0\circ\varphi_i\circ\sigma\bigr)\big\}_{i\in
I}
\] also defines a projective structure on $S$, where $\sigma_0$ and $\sigma$ are the involutions
in \eqref{e1} and \eqref{e2} respectively. This produces
an involution of the space $\widetilde{\mathcal P}(S)$ in \eqref{e0}.
\end{Lemma}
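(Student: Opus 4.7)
The plan is to verify directly that the new atlas satisfies the two conditions in the definition of a projective structure, namely that each chart is an orientation preserving $C^\infty$ embedding into ${\mathbb C}{\mathbb P}^1$, and that every transition function agrees with an element of ${\rm PGL}(2,{\mathbb C})$ on each connected component of overlap. The involutivity will then follow from the facts that $\sigma\circ\sigma={\rm Id}_S$ and $\sigma_0\circ\sigma_0={\rm Id}_{{\mathbb C}{\mathbb P}^1}$.

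First I would check the orientation condition. The map $\sigma$ is orientation reversing on $S$, $\varphi_i$ is orientation preserving from $U_i$ to ${\mathbb C}{\mathbb P}^1$, and $\sigma_0$ (complex conjugation) is orientation reversing on ${\mathbb C}{\mathbb P}^1$. Hence the composition $\sigma_0\circ\varphi_i\circ\sigma$ reverses orientation twice, so it is an orientation preserving $C^\infty$ embedding of $\sigma^{-1}(U_i)$ into ${\mathbb C}{\mathbb P}^1$. Moreover $\{\sigma^{-1}(U_i)\}_{i\in I}$ covers $S$ since $\sigma$ is a diffeomorphism.

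Next I would verify the transition function condition, which is the main point. For a connected component $V^c_{ij}$ of $\sigma^{-1}(U_i)\cap\sigma^{-1}(U_j) = \sigma^{-1}(U_i\cap U_j)$, the transition function for the new atlas is
\[
(\sigma_0\circ\varphi_i\circ\sigma)\circ(\sigma_0\circ\varphi_j\circ\sigma)^{-1}
=\sigma_0\circ\bigl(\varphi_i\circ\varphi_j^{-1}\bigr)\circ\sigma_0,
\]
evaluated on $\sigma_0(\varphi_j(\sigma(V^c_{ij})))$. By hypothesis, on the connected component $\sigma(V^c_{ij})\subset U_i\cap U_j$ the map $\varphi_i\circ\varphi_j^{-1}$ coincides with some $T^c_{ij}\in{\rm PGL}(2,{\mathbb C})$. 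If $T^c_{ij}$ is represented by the matrix $\begin{pmatrix}a&b\\c&d\end{pmatrix}$, then a direct computation shows that $\sigma_0\circ T^c_{ij}\circ\sigma_0$ is the M\"obius transformation represented by $\begin{pmatrix}\overline{a}&\overline{b}\\\overline{c}&\overline{d}\end{pmatrix}$, which again lies in ${\rm PGL}(2,{\mathbb C})$. Thus the transition condition \eqref{tc} holds for the new atlas. The same argument shows that if two atlases on $S$ are equivalent in the sense of the definition, their images under the operation $(U_i,\varphi_i)\mapsto(\sigma^{-1}(U_i),\sigma_0\circ\varphi_i\circ\sigma)$ remain equivalent, so the operation descends to equivalence classes and therefore defines a well-defined self-map of $\widetilde{\mathcal P}(S)$.

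Finally, to see that this self-map is an involution, I would apply the operation twice and observe that
\[
\sigma_0\circ(\sigma_0\circ\varphi_i\circ\sigma)\circ\sigma=\varphi_i
\]
on $\sigma^{-1}(\sigma^{-1}(U_i))=U_i$, recovering the original atlas. The only subtle point in the whole argument is confirming that conjugation by $\sigma_0$ preserves ${\rm PGL}(2,{\mathbb C})$, which is where the anti-holomorphicity of $\sigma$ and $\sigma_0$ conspire to give a holomorphic (rather than anti-holomorphic) transition function; I expect this to be the step requiring the most care, although it reduces, as above, to a one-line matrix computation.
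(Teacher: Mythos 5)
Your proposal is correct and follows essentially the same route as the paper: compute the transition function of the new atlas, observe that the two anti-holomorphic maps $\sigma_0$ cancel to give $\sigma_0\circ\bigl(\varphi_i\circ\varphi_j^{-1}\bigr)\circ\sigma_0 = \sigma_0\circ T^c_{ij}\circ\sigma_0 \in {\rm PGL}(2,{\mathbb C})$, check that equivalence of atlases is preserved, and deduce involutivity from $\sigma\circ\sigma={\rm Id}_S$ and $\sigma_0\circ\sigma_0={\rm Id}$. Your explicit matrix verification and the orientation check are slightly more detailed than the paper's proof but add nothing essentially different.
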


\begin{proof}
Given any $i, j \in I$ and any nonempty connected component $U^c_{ij} \subset U_i\cap U_j$,
the composition of maps on $\sigma_0\bigl(\varphi_j(U^c_{ij})\bigr)$
\[
\sigma_0\circ\varphi_i\circ\sigma\circ (\sigma_0\circ\varphi_j\circ\sigma)^{-1} =
\sigma_0\circ \bigl(\varphi_i\circ \varphi^{-1}_j\bigr)\circ\sigma^{-1}_0 =
\sigma_0\circ \bigl(\varphi_i\circ \varphi^{-1}_j\bigr)\circ\sigma_0
\]
is holomorphic, and it is the restriction of the global holomorphic automorphism
$\sigma_0\circ T^c_{ij}\circ\sigma_0$ of ${\mathbb C}{\mathbb P}^1$.
If $\{(U_j, \varphi_j)\}_{j\in J}$ is
equivalent to $\{(U_i, \varphi_i)\}_{i\in I}$, then $\{(\sigma(U_j),
\sigma_0\circ\varphi_j\circ\sigma)\}_{j\in J}$ is also equivalent to $\{(\sigma(U_i),
\sigma_0\circ\varphi_i\circ\sigma)\}_{i\in I}$. Consequently, we get a map on the space of all projective structures on $S$
\begin{equation}\label{e3}
\widetilde{\sigma} \colon\ \widetilde{\mathcal P}(S)
\longrightarrow \widetilde{\mathcal P}(S)
\end{equation}
that sends any projective structure defined by a coordinate
atlas $\{(U_i, \varphi_i)\}_{i\in I}$ to the one defined by the
coordinate atlas $\{(\sigma(U_i), \sigma_0\circ\varphi_i\circ\sigma)\}_{i\in I}$. From the construction of
$\widetilde{\sigma}$ it follows immediately that
 $\widetilde{\sigma}\circ\widetilde{\sigma} = {\rm Id}_{\widetilde{\mathcal
P}(S)}$.
\end{proof}

\begin{Remark}\label{rem1}
The map $\widetilde{\sigma}$ defined in \eqref{e3} is independent
of the choice of the anti-holomorphic involution $\sigma_0$ of ${\mathbb C}{\mathbb P}^1$, but
in general it depends on $\sigma$. Indeed, in its construction, one may replace~$\sigma_0$ (defined in \eqref{e1}) by any other anti-holomorphic involution $\sigma'_0$ of
${\mathbb C}{\mathbb P}^1$ without changing the map $\widetilde{\sigma}$ because
$\sigma'_0$ and $\sigma_0$ differ by an element of ${\rm PGL}(2,{\mathbb C}) =
{\rm Aut}\bigl({\mathbb C}{\mathbb P}^1\bigr)$.
\end{Remark}

We shall denote the group of orientation preserving diffeomorphisms of $S$ by ${\rm Diff}^+(S)$. Let
\[
{\rm Diff}^0(S) \subset {\rm Diff}^+(S)
\]
be the connected component of it consisting of diffeomorphisms homotopic to the
identity map of $S$; it is a normal subgroup of ${\rm Diff}^+(S)$. The
mapping class group can then be seen as the quotient
\[
{\rm MCG}(S) := {\rm Diff}^+(S)/{\rm Diff}^0(S).
\]
Moreover, considering the space ${\mathcal C}(S)$ of complex structures on $S$
compatible with its $C^\infty$ structure and orientation, the Teichm\"uller space can be seen as the quotient
\begin{equation}\label{e5}
{\mathcal T}(S) := {\mathcal C}(S)/{\rm Diff}^0(S),
\end{equation}
which is a complex manifold of dimension $3(g-1)$.

The group ${\rm Diff}^+(S)$ acts on $\widetilde{\mathcal P}(S)$ as follows. Let $P$ be a projective structure on
$S$ given by~$\{(U_i, \varphi_i)\}_{i\in I}$. Then the action of any $\psi \in {\rm Diff}^+(S)$ sends it to
the projective structure $\psi\cdot P$ defined by $\big\{\bigl(\psi(U_i), \varphi_i\circ\psi^{-1}\bigr)\big\}_{i\in I}$.
Consider the corresponding quotient space
\begin{equation}\label{e6}
{\mathcal P}(S) :=
\widetilde{\mathcal P}(S)/{\rm Diff}^0(S),
\end{equation}
which is a complex manifold of dimension $\dim {\mathcal P}(S) = 6(g-1) = 2\dim {\mathcal T}(S)$.
Since a projective structure on $S$ produces a complex structure on $S$, we have a map
\begin{equation}\label{e7}
\Phi\colon\ {\mathcal P}(S) \longrightarrow {\mathcal T}(S),
\end{equation}
where ${\mathcal P}(S)$ and ${\mathcal T}(S)$ are constructed in \eqref{e6} and \eqref{e5}, respectively.

\begin{Remark} Any Riemann surface admits a projective structure.
For example, there is a~natural projective structure given by the uniformization theorem. Consequently, the map
$\Phi$ in \eqref{e7} is surjective. In fact, $\Phi$ is a holomorphic map, and ${\mathcal
P}(S)$ is a holomorphic torsor over ${\mathcal T}(S)$ for the holomorphic cotangent bundle
$T^*{\mathcal T}(S)$ (see \cite{Gu, He, Hu}).
\end{Remark}

\begin{Lemma}\label{lemmae8}
The map $\widetilde{\sigma}$ in \eqref{e3} produces an anti-holomorphic
involution
\begin{equation}\label{e8}
F\colon\ {\mathcal P}(S) \longrightarrow {\mathcal P}(S).
\end{equation}
\end{Lemma}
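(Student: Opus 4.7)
The plan is to first show that $\widetilde{\sigma}$ descends to the quotient $\mathcal{P}(S) = \widetilde{\mathcal{P}}(S)/{\rm Diff}^0(S)$, then note that the involution property is inherited from Lemma \ref{invoproj}, and finally argue anti-holomorphicity via the torsor structure over Teichm\"uller space.

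For the descent, the key computation is to check how $\widetilde{\sigma}$ interacts with the ${\rm Diff}^+(S)$-action on $\widetilde{\mathcal{P}}(S)$. Given $\psi \in {\rm Diff}^+(S)$ and a projective structure $P$ represented by $\{(U_i, \varphi_i)\}_{i\in I}$, a direct unwinding of the definitions yields
\[
\widetilde{\sigma}(\psi \cdot P) = \bigl(\sigma\psi\sigma^{-1}\bigr)\cdot\widetilde{\sigma}(P),
\]
since the charts of $\widetilde{\sigma}(\psi\cdot P)$ are $\bigl(\sigma\psi(U_i),\, \sigma_0\circ\varphi_i\circ\psi^{-1}\circ\sigma\bigr)$, while the charts of $(\sigma\psi\sigma^{-1})\cdot\widetilde{\sigma}(P)$ are $\bigl(\sigma\psi\sigma^{-1}(\sigma U_i),\, \sigma_0\circ\varphi_i\circ\sigma\circ(\sigma\psi\sigma^{-1})^{-1}\bigr)$, which match upon using $\sigma^2 = {\rm Id}_S$. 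Now conjugation by $\sigma$ carries ${\rm Diff}^0(S)$ into itself: if $\psi$ is homotopic to the identity through a path $\psi_t$, then $\sigma\psi_t\sigma^{-1}$ is a homotopy from $\sigma\psi\sigma^{-1}$ to the identity. Hence $\widetilde{\sigma}$ descends to a well-defined map $F\colon \mathcal{P}(S) \to \mathcal{P}(S)$, and $F\circ F = {\rm Id}_{\mathcal{P}(S)}$ is inherited from the corresponding equality for $\widetilde{\sigma}$ established in Lemma \ref{invoproj}.

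To prove that $F$ is anti-holomorphic, I would use the projection $\Phi\colon \mathcal{P}(S) \to \mathcal{T}(S)$ from \eqref{e7} and the fact that $\mathcal{P}(S)$ is a holomorphic torsor over $\mathcal{T}(S)$ for $T^*\mathcal{T}(S)$. The involution $\widetilde{\sigma}$ covers the classical involution on $\mathcal{T}(S)$ sending a complex structure $J$ to $-\sigma^* J$; indeed, if $P$ is given by charts $(U_i, \varphi_i)$ holomorphic with respect to $J$, then the charts $\sigma_0\circ\varphi_i\circ\sigma$ are holomorphic precisely with respect to $-\sigma^* J$ because both $\sigma$ and $\sigma_0$ are anti-holomorphic, making their composition with $\varphi_i$ holomorphic after the sign flip on $J$. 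This induced map on $\mathcal{T}(S)$ is well known to be anti-holomorphic (this is the classical real structure on Teichm\"uller space associated to $\sigma$).

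Finally, one must verify that the fiber-wise action of $F$ is anti-linear relative to the affine structure modelled on $T^*\mathcal{T}(S)$. A chart change in the target of $\widetilde{\sigma}$ is $\sigma_0\circ(\varphi_i\circ\varphi_j^{-1})\circ\sigma_0$, so the associated Schwarzian cocycle measuring the difference between two projective structures lying over conjugate points of $\mathcal{T}(S)$ is obtained by pulling back the original Schwarzian cocycle by $\sigma$ and conjugating by $\sigma_0$. On the level of holomorphic quadratic differentials in $H^0(X, K_X^{\otimes 2})$, this operation is exactly $q \mapsto \sigma^*\overline{q}$, which is conjugate linear. Combined with the anti-holomorphicity of $F$ on the base $\mathcal{T}(S)$, this shows $F$ is anti-holomorphic on the total space $\mathcal{P}(S)$. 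The main obstacle is this last step: making the verification of conjugate-linearity on fibers rigorous, which requires choosing a convenient trivialization of the torsor (for instance via the uniformization section) and tracking how $\widetilde{\sigma}$ acts in that trivialization.
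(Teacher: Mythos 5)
Your proposal is correct and follows essentially the same route as the paper: the descent to $\mathcal{P}(S)$ rests on the identity $\widetilde{\sigma}(\psi\cdot P)=(\sigma\psi\sigma^{-1})\cdot\widetilde{\sigma}(P)$ together with the fact that conjugation by $\sigma$ preserves ${\rm Diff}^0(S)$, exactly as in the paper. The only difference is that the paper simply asserts anti-holomorphicity of $F$, while you supply the (correct) justification via the torsor structure over $\mathcal{T}(S)$ and the conjugate-linear fiberwise action $q\mapsto\sigma^*\overline{q}$, which is consistent with the paper's equations \eqref{ec} and \eqref{fe}.
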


\begin{proof}
Since ${\rm Diff}^0(S)$ is the connected component of ${\rm Diff}(S)$ containing the identity element, it is a normal subgroup
of ${\rm Diff}(S)$. Moreover, ${\rm Diff}^+(S)$ is also a normal subgroup of ${\rm Diff}(S)$, and~${\rm Diff}(S)/{\rm
Diff}^+(S) = {\mathbb Z}/2{\mathbb Z}$. Take any $\psi \in {\rm Diff}^0(S)$ and any projective structure $P$ on
$S$ given by $\{(U_i, \varphi_i)\}_{i\in I}$. Then the projective structure $\widetilde{\sigma}(\psi\cdot P)$ is given
by $\big\{\bigl(\sigma\circ \psi(U_i), \sigma_0\circ\varphi_i\circ\psi^{-1}\circ\sigma\bigr)\big\}_{i\in I}$. Therefore, we have
\[
\widetilde{\sigma}(\psi\cdot P) = (\sigma\circ\psi\circ\sigma)\cdot\widetilde{\sigma}(P).
\]
Since $\sigma\circ\psi\circ\sigma \in {\rm Diff}^0(S)$ (recall that ${\rm Diff}^0(S)$ is a normal subgroup), we
conclude $\widetilde{\sigma}$ produces a~self-map $F$ of ${\mathcal P}(S)$. This map $F$ is evidently an involution.
Also $F$ is clearly anti-holomorphic.\looseness=1
\end{proof}

For any $C^\infty$ complex structure $J\colon TS \longrightarrow TS$ on $S$ compatible with its orientation,
the pullback $-\sigma^*J$
is a $C^\infty$ complex structure on $S$ compatible with its orientation, and thus there is an induced anti-holomorphic involution
\begin{align}
f\colon\ {\mathcal T}(S) &\longrightarrow {\mathcal T}(S),\nonumber \\
J &\longmapsto -\sigma^*J. \label{e9}
\end{align}
For $f$ and $F$ the involutions constructed in \eqref{e9} and \eqref{e8}, respectively, one can see that
\begin{equation}\label{ec}
\Phi\circ F = f\circ\Phi,
\end{equation}
where $\Phi$ is the projection in \eqref{e7}. Given any Riemann surface $X \!\!\in\! {\mathcal T}(S)$, the fiber
${\Phi^{-1}(X)\! \subset \!{\mathcal P}(S)}$ is an affine space for
$H^0\bigl(X, K^{\otimes 2}_X\bigr)$, where $K_X$ is the holomorphic cotangent bundle
of $X$ \cite{ Gu, Hu}. Moreover, using the isomorphism
\[
T^*_X {\mathcal T}(S) = H^0\bigl(X, K^{\otimes 2}_X\bigr),
\]
${\mathcal P}(S)$ is realized as a holomorphic affine bundle over
${\mathcal T}(S)$ for the holomorphic vector bundle~$T^*{\mathcal T}(S)$ \cite{Gu,Hu}.
The involution $F$ satisfies the equation
\begin{equation}\label{fe}
F(P+\omega) = F(P)+ \sigma^*\overline{\omega}
\end{equation}
for all $P \in \Phi^{-1}(X)$ and $\omega \in H^0\bigl(X, K^{\otimes 2}_X\bigr)$, where
$\sigma^*\overline{\omega}$ is the corresponding holomorphic quadratic differential on the
Riemann surface $f(X) \in {\mathcal T}(S)$.

\section{Real slices of the character variety and Teichm\"uller space}

In what follows, we shall study the induced real slices of the character variety and Teichm\"uller space obtained through the
anti-holomorphic involutions introduced in the previous section.

\subsection{The character variety and symplectic form}

As before, $S$ is a compact oriented surface of genus at least two.
Given a base point $x_0 \in S$, recall that a homomorphism $\rho\colon \pi_1(S, x_0) \longrightarrow
\operatorname{PSL}(2,{\mathbb C})$ is called irreducible if the image of $\rho$ is not contained in any
Borel subgroup of $\operatorname{PSL}(2,{\mathbb C})$.
Let
\[
\operatorname{Hom}(\pi_1(S, x_0), \operatorname{PSL}(2,{\mathbb C}))^{\rm ir} \subset
\operatorname{Hom}(\pi_1(S, x_0), \operatorname{PSL}(2,{\mathbb C}))
\]
be the space of all irreducible homomorphisms.

\begin{Definition}
The {\it irreducible ${\rm PSL}(2,{\mathbb C})$-character variety of $S$} is the quotient space
\begin{equation}\label{e11}
{\mathcal R}_2(S) := \operatorname{Hom}\left(\pi_1(S, x_0), \operatorname{PSL}(2,{\mathbb C})\right)^{\rm ir}/\operatorname{PSL}(2,{\mathbb C})
\end{equation}
for the action of $\operatorname{PSL}(2,{\mathbb C})$ on $\operatorname{Hom}\left(\pi_1(S, x_0), \operatorname{PSL}(2,{\mathbb
C})\right)^{\rm ir}$ given by the conjugation action of~$\operatorname{PSL}(2,{\mathbb C})$ on itself.
\end{Definition}

It should be mentioned that ${\mathcal R}_2(S)$ is independent of the choice of the base point $x_0$.
In fact, ${\mathcal R}_2(S)$ parametrizes the isomorphism classes of flat $\operatorname{PSL}(2,{\mathbb C})$-connections
on $S$.

The space ${\mathcal R}_2(S)$ has two connected components, which are parametrized
by the second Stiefel--Whitney class of the projective bundles associated to the
$\operatorname{PSL}(2,{\mathbb C})$-bundles on $S$. Also each connected component
of ${\mathcal R}_2(S)$ is an irreducible complex affine
variety of dimension $6(g-1)$. We note that its algebraic structure is given by the algebraic structure of
$\operatorname{PSL}(2,{\mathbb C})$ and the fact that the group $\pi_1(S, x_0)$ is
finitely presented. Moreover, the character variety ${\mathcal R}_2(S)$ is a~smooth orbifold
and is equipped with an algebraic symplectic structure \cite{ AB, Go}, which we shall~denote~by\looseness=1
\begin{equation}\label{e13}
\Theta_2 \in H^0\bigl({\mathcal R}_2(S), \Omega^2_{{\mathcal R}_2(S)}\bigr).
\end{equation}

The involution $\sigma$ of $S$ in \eqref{e2} induces an algebraic involution
\begin{align*}
I_2\colon\ {\mathcal R}_2(S) &\longrightarrow {\mathcal R}_2(S), \\
(E, \nabla)&\longmapsto (\sigma^*E, \sigma^*{\nabla}),
\end{align*}
where $\nabla$ is a flat connection on a principal $\operatorname{PSL}(2,{\mathbb C})$-bundle $E$,
equivalently, $I_2$ sends any homomorphism
\[
\rho\colon\
\pi_1(S, x_0) \longrightarrow \operatorname{PSL}(2, {\mathbb C})
\]
 to the homomorphism
$\pi_1(S, \sigma(x_0)) \longrightarrow \operatorname{PSL}(2,{\mathbb C})$ that maps any
$\gamma \in \pi_1(S, \sigma(x_0))$ to
\[
\rho(\sigma(\gamma))
\in \operatorname{PSL}(2,{\mathbb C}).\]
The anti-holomorphic involution $A \longmapsto \overline{A}$
of $\operatorname{PSL}(2,{\mathbb C})$ defines an anti-holomorphic involution of the representation space
$\operatorname{Hom}\left(\pi_1(S, x_0), \operatorname{PSL}(2,{\mathbb C})\right)^{\rm ir}$; this anti-holomorphic involution
\[
\operatorname{Hom}(\pi_1(S, x_0), \operatorname{PSL}(2,{\mathbb C}))^{\rm ir} \longrightarrow
 \operatorname{Hom}(\pi_1(S, x_0), \operatorname{PSL}(2,{\mathbb C}))^{\rm ir}
 \] in turn produces an
anti-holomorphic involution
\begin{equation}\label{eb}
b\colon\ {\mathcal R}_2(S) \longrightarrow {\mathcal R}_2(S)
\end{equation}
of ${\mathcal R}_2(S)$. It is straightforward to check that $b\circ I_2 = I_2\circ b$,
and hence the composition
\begin{equation}\label{e14}
h := b\circ I_2\colon\ {\mathcal R}_2(S) \longrightarrow {\mathcal R}_2(S)
\end{equation}
is an anti-holomorphic involution of ${\mathcal R}_2(S)$ (recall that the map $I_2$
is holomorphic).

\begin{Lemma}\label{lem1}
The symplectic form $\Theta_2$ in \eqref{e13} satisfies the equation
\[
h^*\Theta_2 = -\overline{\Theta}_2,
\]
where $h$ is the involution in \eqref{e14}.
\end{Lemma}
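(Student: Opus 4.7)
The plan is to analyse the two involutions $I_2$ and $b$ separately using the Goldman--Atiyah--Bott description of $\Theta_2$, and then combine the two calculations. Recall that at a class $[\nabla] \in {\mathcal R}_2(S)$, the holomorphic tangent space is naturally identified with $H^1(S, \operatorname{ad}(E))$ (hypercohomology of $S$ twisted by the adjoint flat bundle), and that
\[
\Theta_2(\alpha,\beta) \,=\, \int_S \langle \alpha\wedge\beta\rangle,
\]
where $\langle\cdot,\cdot\rangle$ denotes the trace pairing on $\mathfrak{psl}(2,{\mathbb C})$ (or any invariant ${\mathbb C}$-bilinear form on the Lie algebra).

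First I would compute $I_2^*\Theta_2$. The differential of $I_2$ sends $\alpha \in H^1(S, \operatorname{ad}(E))$ to $\sigma^*\alpha \in H^1(S, \operatorname{ad}(\sigma^*E))$, and $\sigma^*$ commutes pointwise with the trace pairing. Since $\sigma$ is orientation-reversing, one has $\int_S \sigma^*\eta = -\int_S \eta$ for every complex-valued top form $\eta$. Applying this to $\eta = \langle\alpha\wedge\beta\rangle$ yields $I_2^*\Theta_2 = -\Theta_2$, as an identity of holomorphic $(2,0)$-forms on ${\mathcal R}_2(S)$.

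Next I would compute $b^*\Theta_2$. The involution $b$ is induced by $A\longmapsto \overline{A}$ on $\operatorname{PSL}(2,{\mathbb C})$, so its differential acts on tangent vectors by complex conjugation of the $\mathfrak{psl}(2,{\mathbb C})$-valued forms, i.e., by $\alpha\longmapsto \overline{\alpha}$. Since the trace pairing is ${\mathbb C}$-bilinear,
\[
b^*\Theta_2(\alpha,\beta) \,=\, \int_S \langle\overline{\alpha}\wedge\overline{\beta}\rangle \,=\, \overline{\int_S \langle\alpha\wedge\beta\rangle} \,=\, \overline{\Theta_2}(\alpha,\beta),
\]
so $b^*\Theta_2 = \overline{\Theta_2}$; this is also consistent with $\Theta_2$ being of type $(2,0)$ and $b$ being anti-holomorphic, so that $b^*\Theta_2$ must be of type $(0,2)$.

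Finally I would combine the two computations. Using $h=b\circ I_2$ together with the fact that $I_2$ is holomorphic (so $I_2^*$ commutes with complex conjugation of forms), one gets
\[
h^*\Theta_2 \,=\, I_2^*\bigl(b^*\Theta_2\bigr) \,=\, I_2^*\overline{\Theta_2} \,=\, \overline{I_2^*\Theta_2} \,=\, \overline{-\Theta_2} \,=\, -\overline{\Theta_2},
\]
which is the desired identity. The only real difficulty is to set up the induced actions on tangent spaces precisely and to keep track of how the two sign sources (orientation reversal of $\sigma$ on the surface, and complex conjugation on the Lie algebra) interact; once that is done, the proof reduces to a short manipulation of the Goldman--Atiyah--Bott integral formula.
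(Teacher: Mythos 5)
Your proposal is correct and follows essentially the same route as the paper: the paper's proof likewise reduces the claim to the two facts $I_2^*\Theta_2 = -\Theta_2$ (from $\sigma$ being orientation reversing) and $b^*\Theta_2 = \overline{\Theta}_2$, then combines them via $h = b\circ I_2$. Your write-up simply supplies the Goldman--Atiyah--Bott integral formula details that the paper leaves implicit.
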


\begin{proof}
We have $I^*_2\Theta_2 = -\Theta_2$, because $\sigma$ is an orientation reversing involution.
We also have~$b^*\Theta_2 = \overline{\Theta}_2$. In view of
\eqref{e14}, these two together
imply that $h^*\Theta_2 = -\overline{\Theta}_2$.
\end{proof}

Giving a projective structure on a Riemann surface $X$ is equivalent to giving a
holomorphic fiber bundle $p\colon Z \longrightarrow X$, a holomorphic connection
$\nabla$ on the fiber bundle and a holomorphic section $s\colon X \longrightarrow Z$
of $p$, such that
\begin{itemize}\itemsep=0pt
\item the fibers of $p$ are isomorphic to ${\mathbb C}{\mathbb P}^1$, and

\item $s$ is transversal to the horizontal distribution for the connection $\nabla$
\end{itemize}
(see \cite{Gu,Hu} for details). Therefore the fiber bundle $Z$ produces a holomorphic principal $\operatorname{PSL}(2,
{\mathbb C})$-bundle $\mathcal P$ over $X$; the fiber of $\mathcal P$ over any $x \in X$ is the space of
all holomorphic isomorphisms from ${\mathbb C}{\mathbb P}^1$ to the fiber $p^{-1}(x)$.
The connection $\nabla$ produces a holomorphic connection on $\mathcal P$. The monodromy representation
$\pi_1(X, x_0) \longrightarrow \operatorname{PSL}(2, {\mathbb C})$ for $\nabla$ is irreducible.

\begin{Lemma}
The symplectic form $\Theta_2$ on the character variety ${\mathcal R}_2(S)$ defined in
\eqref{e13} induces a natural symplectic form $\Theta_{\mathcal P}$ on the
quotient space of projective structures ${\mathcal P}(S)$ introduced in~\eqref{e6}.
\end{Lemma}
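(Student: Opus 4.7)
The plan is to define $\Theta_{\mathcal P}$ as the pullback of $\Theta_2$ along the monodromy map from projective structures to the character variety, and then verify closedness and non-degeneracy.

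First, I would construct the monodromy map at the level of $\widetilde{\mathcal P}(S)$. Given a projective structure $P$ on a Riemann surface $X$, the description recalled just above the lemma identifies $P$ with a triple $(Z, \nabla, s)$, where $Z\to X$ is a $\mathbb{C}\mathbb{P}^1$-bundle, $\nabla$ is a holomorphic connection on $Z$, and $s$ is a transversal holomorphic section. Forgetting $s$ and passing to the associated principal $\operatorname{PSL}(2,\mathbb C)$-bundle $\mathcal P \to X$, the connection $\nabla$ lifts to a flat holomorphic connection on $\mathcal P$ whose monodromy representation $\pi_1(X, x_0)\longrightarrow \operatorname{PSL}(2,\mathbb C)$ is irreducible. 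Its conjugacy class defines an element of ${\mathcal R}_2(S)$, yielding a holomorphic map $\widetilde{\mathcal M}\colon \widetilde{\mathcal P}(S) \longrightarrow {\mathcal R}_2(S)$. Any $\psi \in {\rm Diff}^0(S)$ is isotopic to the identity, so it acts on $\pi_1(S,x_0)$ by an inner automorphism, and the pullback of $(\mathcal P, \nabla)$ by $\psi$ produces a representation in the same $\operatorname{PSL}(2,\mathbb C)$-conjugacy class. Hence $\widetilde{\mathcal M}$ descends to a holomorphic map
\[
\mathcal M\colon\ {\mathcal P}(S) \longrightarrow {\mathcal R}_2(S).
\]

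Next, I would set $\Theta_{\mathcal P} := \mathcal M^* \Theta_2$. Closedness is automatic from closedness of $\Theta_2$, so the content of the lemma reduces to non-degeneracy, equivalently to showing that the differential of $\mathcal M$ is injective at every point. Both ${\mathcal P}(S)$ and each connected component of ${\mathcal R}_2(S)$ are smooth complex of the same dimension $6(g-1)$, so injectivity of $d\mathcal M$ will in fact upgrade $\mathcal M$ to a local biholomorphism, and then $\mathcal M^*\Theta_2$ is a genuine holomorphic symplectic form.

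The main obstacle is this non-degeneracy step. The cleanest route is to invoke the classical result (Hejhal, Earle, Hubbard; see \cite{Gu,Hu}) that the monodromy map from projective structures to the character variety is a local biholomorphism. A self-contained verification would identify $T_{\mathcal M(P)}{\mathcal R}_2(S)$ with the first hypercohomology of the de Rham complex of the flat adjoint bundle $\operatorname{ad}(\mathcal P)$, identify $T_P {\mathcal P}(S)$ with the corresponding hypercohomology of the Atiyah-type complex governing deformations of the pair $(\mathcal P,\nabla)$, and read off injectivity of $d\mathcal M$ from vanishing of $H^0(X,\operatorname{ad}(\mathcal P))$, which is forced by irreducibility of the monodromy. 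Naturality (independence from the choice of base point and compatibility with the ${\rm Diff}^0(S)$-quotient) is then immediate from the construction.
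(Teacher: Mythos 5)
Your proposal is correct and follows essentially the same route as the paper: the paper defines $\mu\colon {\mathcal P}(S) \to {\mathcal R}_2(S)$ by assigning the monodromy representation of the flat $\operatorname{PSL}(2,{\mathbb C})$-connection attached to a projective structure, cites the classical fact that $\mu$ is a local biholomorphism (Hubbard, Hejhal), and sets $\Theta_{\mathcal P} := \mu^*\Theta_2$. Your additional remarks on descent through the ${\rm Diff}^0(S)$-quotient and the hypercohomological verification of injectivity of the differential are consistent elaborations of the same argument.
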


\begin{proof}
We have seen above that a projective structure on $S$ defines a flat
$\operatorname{PSL}(2,{\mathbb C})$-connection on $S$. Therefore, assigning
the monodromy representation to the flat connections we get a map
\begin{equation}\label{e12}
\mu\colon\ {\mathcal P}(S) \longrightarrow {\mathcal R}_2(S),
\end{equation}
from the quotient space of projective structures ${\mathcal P}(S)$ introduced in \eqref{e6} to
the $\operatorname{PSL}(2,{\mathbb C})$-character variety ${\mathcal R}_2(S)$ constructed in \eqref{e11}.
It is known that $\mu$ is a local biholomorphism (see~\cite[p.~272]{Hu} and \cite{He}) and thus the
pullback
\begin{equation}\label{e16}
\Theta_{\mathcal P} := \mu^*\Theta_2 \in H^0\bigl({\mathcal P}(S), \Omega^2_{{\mathcal P}(S)}\bigr)
\end{equation}
of the symplectic form in \eqref{e13} is a holomorphic symplectic form
on ${\mathcal P}(S)$.\end{proof}

\begin{Remark}From the constructions of $F$ and $h$ in \eqref{e8} and \eqref{e14} respectively, it follows that
\begin{equation}\label{e17}
\mu \circ F = h\circ\mu,
\end{equation}
where $\mu$ is the monodromy map in \eqref{e12}.\end{Remark}

\begin{Corollary}\label{cor1}
The symplectic form $\Theta_{\mathcal P}$ in \eqref{e16} satisfies the equation
\[
F^*\Theta_{\mathcal P} = -\overline{\Theta}_{\mathcal P}.
\]
\end{Corollary}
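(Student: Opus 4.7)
The plan is to deduce the corollary directly from the three ingredients already on the table: the definition \eqref{e16} of $\Theta_{\mathcal P}$ as the pullback $\mu^*\Theta_2$, the intertwining relation \eqref{e17} between $F$ and $h$ via the monodromy map $\mu$, and Lemma \ref{lem1} which gives $h^*\Theta_2 = -\overline{\Theta}_2$. Since no further structure on $\mathcal P(S)$ needs to be exploited, the argument will be essentially a naturality computation for pullbacks of $2$-forms.

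More concretely, I would first rewrite $F^*\Theta_{\mathcal P}$ using \eqref{e16} as $F^*\mu^*\Theta_2 = (\mu\circ F)^*\Theta_2$, then apply \eqref{e17} to replace $\mu\circ F$ by $h\circ\mu$, obtaining $(h\circ\mu)^*\Theta_2 = \mu^*(h^*\Theta_2)$. At this point Lemma \ref{lem1} gives $h^*\Theta_2 = -\overline{\Theta}_2$, so what remains is $\mu^*(-\overline{\Theta}_2) = -\mu^*\overline{\Theta}_2$.

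The one point that needs a word of justification is the commutation of pullback with complex conjugation on forms, namely $\mu^*\overline{\Theta}_2 = \overline{\mu^*\Theta_2}$. This uses that $\mu$ is holomorphic (it is even a local biholomorphism, as recalled right after \eqref{e12}), so that its pullback commutes with the conjugation acting on the coefficients of a $(2,0)$-form. Once this is noted, the chain closes up to $-\overline{\mu^*\Theta_2} = -\overline{\Theta}_{\mathcal P}$, which is the desired identity.

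I do not anticipate any real obstacle here: everything needed has already been proved in the preceding lemma and remark, and the only substantive input is the holomorphy of $\mu$. The corollary should therefore be presented as a short naturality calculation, with a single parenthetical remark explaining why conjugation passes through $\mu^*$.
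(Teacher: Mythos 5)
Your proposal is correct and follows exactly the paper's own argument: pull back via \eqref{e16}, use the intertwining relation \eqref{e17}, then apply Lemma \ref{lem1}. The extra remark that $\mu^*$ commutes with conjugation because $\mu$ is holomorphic is a sensible clarification of a step the paper leaves implicit.
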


\begin{proof}
{}From \eqref{e17} we have $F^*\Theta_{\mathcal P} = F^*\mu^*\Theta_2 = \mu^*h^*
\Theta_2$. Therefore, from Lemma \ref{lem1} and \eqref{e16} it follows that $F^*
\Theta_{\mathcal P} = -\mu^*\overline{\Theta}_2 = -\overline{\Theta}_{\mathcal P}$.
\end{proof}

\subsection{Real projective structures}

A projective structure $P \in {\mathcal P}(S)$ is called \textit{real} if it is a fixed point of the anti-holomorphic involution
\[F\colon\ {\mathcal P}(S) \longrightarrow {\mathcal P}(S)\]
introduced in \eqref{e8}. Moreover, given a real projective structure $P \in {\mathcal P}(S)$, from \eqref{ec} one obtains an
induced real point $\Phi(P) \in {\mathcal T}(S)$ with respect to the anti-holomorphic involution $f$, meaning
\[
f(\Phi(P)) = \Phi(P).
\]
The following lemma is a sort of converse of it.

\begin{Lemma}\label{lem2}
Given any $J \in {\mathcal T}(S)$ such that $f(J) = J$, there exists a real projective structure~$P$
such that $\Phi(P) = J$.
\end{Lemma}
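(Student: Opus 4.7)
The plan is to exploit the fact that the fiber $\Phi^{-1}(J) \subset \mathcal{P}(S)$ is an affine space for $H^0\bigl(X, K^{\otimes 2}_X\bigr)$ (where $X$ denotes $S$ with complex structure $J$), together with the explicit description of $F$ on each fiber given by \eqref{fe}, and simply average.

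First I would choose any projective structure $P_0 \in \Phi^{-1}(J)$; such a $P_0$ exists because $\Phi$ is surjective (for example by uniformization). From the intertwining relation $\Phi\circ F = f\circ\Phi$ in \eqref{ec} and the hypothesis $f(J)=J$, we get $\Phi(F(P_0)) = f(\Phi(P_0)) = J$, so $F(P_0)$ lies in the same fiber as $P_0$. Thus there is a unique $\eta \in H^0\bigl(X, K^{\otimes 2}_X\bigr)$ with
\[
F(P_0) = P_0 + \eta.
\]

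Next I would determine the symmetry constraint on $\eta$. Applying $F$ again and using \eqref{fe}, together with $F^2 = \mathrm{Id}$, gives
\[
P_0 = F(F(P_0)) = F(P_0 + \eta) = F(P_0) + \sigma^*\overline{\eta} = P_0 + \eta + \sigma^*\overline{\eta},
\]
so $\sigma^*\overline{\eta} = -\eta$; that is, $\eta$ lies in the $(-1)$-eigenspace of the conjugate-linear involution $\omega \mapsto \sigma^*\overline{\omega}$ on $H^0\bigl(X, K^{\otimes 2}_X\bigr)$.

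Finally I would set
\[
P := P_0 + \tfrac{1}{2}\eta,
\]
which makes sense because $\Phi^{-1}(J)$ is an affine space over a complex vector space. Using \eqref{fe} and the conjugate-linearity of $\omega \mapsto \sigma^*\overline{\omega}$, one computes
\[
F(P) = F(P_0) + \sigma^*\overline{\tfrac{1}{2}\eta} = P_0 + \eta + \tfrac{1}{2}\sigma^*\overline{\eta} = P_0 + \eta - \tfrac{1}{2}\eta = P_0 + \tfrac{1}{2}\eta = P,
\]
so $P$ is real and $\Phi(P) = J$. There is no genuine obstacle here: the whole argument reduces to the observation that an anti-holomorphic involution of a nonempty complex affine space always has a fixed point, obtained as the midpoint of any orbit of size two. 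The only subtlety to flag is ensuring the scaling by $\tfrac{1}{2}$ is meaningful on the affine fiber, which is immediate from its $H^0\bigl(X, K^{\otimes 2}_X\bigr)$-torsor structure.
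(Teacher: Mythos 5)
Your proof is correct, but it takes a genuinely different route from the paper. The paper's argument is a one-liner: given $J$ with $f(J)=J$, take $P$ to be the projective structure coming from the uniformization of $(S,J)$; since uniformization is natural, $F(P)$ is the uniformizing structure of $f(J)=J$, so $P$ is fixed and $\Phi(P)=J$. You instead avoid invoking any canonical choice and argue abstractly on the fiber: you use surjectivity of $\Phi$, the intertwining relation \eqref{ec} to see that $F$ preserves the fiber $\Phi^{-1}(J)$, the affine action formula \eqref{fe}, and the fact that $F^2=\mathrm{Id}$ forces the translation vector $\eta$ into the $(-1)$-eigenspace of $\omega\mapsto\sigma^*\overline{\omega}$, whence the midpoint $P_0+\tfrac12\eta$ is fixed (the key point being that $\tfrac12$ is real, so it commutes with the conjugate-linear involution). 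Both arguments are sound. Yours is more robust and more general --- it is exactly the observation that an involution of a nonempty affine space whose linear part is a (conjugate-linear) involution has a fixed point, and it anticipates the torsor structure established in Lemma~\ref{affinebun}; its only dependency is the unproved-but-stated formula \eqref{fe}. The paper's proof is shorter and, as a bonus, exhibits a distinguished real point in the fiber (the Fuchsian one) rather than merely proving existence.
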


\begin{proof}
Let $P$ be the projective structure given by the uniformization of the Riemann surface~$(S, J)$. Then it
is evident that $P$ is real and satisfies the equation $\Phi(P) = J$.
\end{proof}

In what follows we shall denote by
${\mathcal T}(S)^f \subset {\mathcal T}(S)$ and $ {\mathcal P}(S)^F \subset {\mathcal P}(S)$
 the fixed point loci for the involutions $f$ and $F$ respectively. The fixed point locus ${\mathcal P}(S)^F$ \big(respectively, ${\mathcal T}(S)^f$\big) is a~$C^\infty$ real
submanifold of ${\mathcal P}(S)$ (respectively, ${\mathcal T}(S)$) of dimension $6(g-1)$
(respectively, $3(g-1)$). Let\looseness=1
\begin{equation}\label{ep0}
\Phi_0\colon\ {\mathcal P}(S)^F \longrightarrow {\mathcal T}(S)^f
\end{equation}
be the restriction of the map $\Phi$ in \eqref{e7}; as noted above, from \eqref{ec} it follows immediately that
$\Phi$ maps ${\mathcal P}(S)^F$ to ${\mathcal T}(S)^f$. From Lemma \ref{lem2}, it follows that the map
$\Phi_0$ is surjective.

The differential ${\rm d}f$ of the anti-holomorphic involution $f$ of ${\mathcal T}(S)$ produces an anti-holomorphic
involution of the holomorphic cotangent bundle $T^*{\mathcal T}(S)$. Let
\[
T^*{\mathcal T}(S)^f \subset T^*{\mathcal T}(S)
\]
be the fixed point locus of this involution. This $T^*{\mathcal T}(S)^f$ is a $C^\infty$ real
vector bundle over ${\mathcal T}(S)^f$. We note that $T^*{\mathcal T}(S)^f$ is identified with the (real) cotangent
bundle $T^*\big({\mathcal T}(S)^f\big)$ of ${\mathcal T}(S)^f$.

\begin{Lemma} \label{affinebun}The space ${\mathcal P}(S)^F$ is an affine bundle over
${\mathcal T}(S)^f$ for the real vector bundle $T^*{\mathcal T}(S)^f = T^*\bigl({\mathcal T}(S)^f\bigr)$.
\end{Lemma}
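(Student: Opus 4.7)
The plan is to combine equation \eqref{fe}, which describes the affine action compatibility with $F$, together with Lemma \ref{lem2}, which ensures every fiber of $\Phi_0$ is nonempty, and finally invoke the general fact that the fixed locus of an anti-holomorphic involution of a holomorphic bundle is a real smooth subbundle modeled on the fixed locus of the underlying vector bundle involution.

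First, I would fix $X \in {\mathcal T}(S)^f$ and describe the fiber $\Phi_0^{-1}(X) \subset {\mathcal P}(S)^F$. By Lemma \ref{lem2}, there exists at least one real projective structure $P_0 \in \mathcal{P}(S)^F$ with $\Phi(P_0) = X$; since $\Phi^{-1}(X)$ is an affine space for $H^0\bigl(X, K_X^{\otimes 2}\bigr)$, any other element of the fiber has the form $P_0 + \omega$. Applying \eqref{fe} and using $F(P_0) = P_0$, we obtain $F(P_0 + \omega) = P_0 + \sigma^*\overline{\omega}$, so $P_0 + \omega \in {\mathcal P}(S)^F$ if and only if $\omega = \sigma^*\overline{\omega}$. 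Thus $\Phi_0^{-1}(X)$ is an affine space modeled on the real subspace
\[
H^0\bigl(X, K_X^{\otimes 2}\bigr)^{\sigma} := \bigl\{\omega \in H^0\bigl(X, K_X^{\otimes 2}\bigr) \, : \, \sigma^*\overline{\omega} = \omega\bigr\}.
\]

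Second, I would identify this real vector space with $T^*_X\bigl({\mathcal T}(S)^f\bigr)$. Under the natural isomorphism $T^*_X{\mathcal T}(S) \cong H^0\bigl(X, K_X^{\otimes 2}\bigr)$, the anti-holomorphic involution induced by $f$ on the cotangent bundle is precisely $\omega \mapsto \sigma^*\overline{\omega}$ (which matches \eqref{fe}). Since $f$ is a smooth anti-holomorphic involution and $X$ is a fixed point, its fixed locus ${\mathcal T}(S)^f$ is a smooth totally real submanifold whose cotangent space at $X$ coincides canonically with the fixed subspace $\bigl(T^*_X{\mathcal T}(S)\bigr)^{{\rm d} f}$; this is precisely the paragraph preceding the lemma. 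Combining this with the preceding step identifies the model vector space of $\Phi_0^{-1}(X)$ with $T^*_X\bigl({\mathcal T}(S)^f\bigr)$, fiberwise.

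Third, I would upgrade this fiberwise description to a smooth affine bundle structure. Since ${\mathcal P}(S) \to {\mathcal T}(S)$ is a holomorphic affine bundle modeled on $T^*{\mathcal T}(S)$ and $F$ is a smooth (anti-holomorphic) bundle involution covering $f$, the fixed locus ${\mathcal P}(S)^F \to {\mathcal T}(S)^f$ inherits the structure of a smooth real affine bundle modeled on the fixed vector subbundle $T^*{\mathcal T}(S)^f$; local triviality is obtained by averaging local holomorphic affine trivializations of ${\mathcal P}(S)$ over the ${\mathbb Z}/2{\mathbb Z}$ action generated by $F$ (or, equivalently, by applying the implicit function theorem to the smooth equation $F(P) = P$ using the transversality implied by \eqref{fe}). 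Combined with the identification $T^*{\mathcal T}(S)^f = T^*\bigl({\mathcal T}(S)^f\bigr)$ already noted in the text, this gives the desired conclusion.

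I do not anticipate a genuine obstacle here: the only mildly nontrivial point is the smooth local triviality of the fixed-point bundle, and this follows from standard equivariant constructions once one knows, from \eqref{fe}, that the linearization of $F$ along the affine fibers is precisely the model involution $\omega \mapsto \sigma^*\overline{\omega}$ whose fixed subspace has constant real rank $3(g-1)$.
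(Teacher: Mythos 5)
Your proposal is correct and follows essentially the same route as the paper: identify each fiber of $\Phi_0$ as an affine space for $H^0\bigl(X, K_X^{\otimes 2}\bigr)^\sigma$ using the affine structure of $\Phi^{-1}(X)$ together with \eqref{fe} (with nonemptiness supplied by Lemma \ref{lem2}), and then identify that real vector space with $T^*_X\bigl({\mathcal T}(S)^f\bigr)$. You simply spell out the fiberwise computation and the local triviality in more detail than the paper, which states these steps as immediate.
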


\begin{proof}
Given any $J \in {\mathcal T}(S)^f$, let $X$ be the corresponding Riemann surface $(S, J)$, and denote the
holomorphic cotangent bundle of $X$ by $K_X$. Let
\[
H^0\bigl(X, K^{\otimes 2}_X\bigr)^\sigma \subset H^0\bigl(X, K^{\otimes 2}_X\bigr)
\]
be the fixed point locus of the conjugate linear involution $w \longmapsto \sigma^*\overline{w}$
of $H^0\bigl(X, K^{\otimes 2}_X\bigr)$. The fiber over $X$ of the map $\Phi_0\colon \mathcal P(S)^F
 \longrightarrow {\mathcal T}(S)^f$ in \eqref{ep0} is actually an affine space for
the real vector space $H^0\bigl(X, K^{\otimes 2}_X\bigr)^\sigma$. Indeed, this follows
immediately from the combination of the fact that
the space of all projective structures on the Riemann surface $X$ is an affine
space, for the complex vector space $H^0\bigl(X, K^{\otimes 2}_X\bigr)$, and \eqref{fe}.
We note that
\[
T^*_X \bigl({\mathcal T}(S)^f\bigr) = H^0\bigl(X, K^{\otimes 2}_X\bigr)^\sigma.\]
Consequently, ${\mathcal P}(S)^F$ is an affine bundle over
${\mathcal T}(S)^f$ for the real vector bundle $T^*\bigl({\mathcal T}(S)^f\bigr)$ as needed.
\end{proof}

\begin{Proposition}\label{prop1}
The manifold ${\mathcal P}(S)^F$ is connected, and it is homeomorphic to the Euclidean space.
\end{Proposition}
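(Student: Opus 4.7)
The plan is to combine the affine bundle structure furnished by Lemma \ref{affinebun} with the known topology of the fixed locus $\mathcal{T}(S)^f$ of the anti-holomorphic involution $f$ on Teichm\"uller space. First I would invoke the classical fact that $\mathcal{T}(S)^f$ is connected and homeomorphic to the Euclidean space $\mathbb{R}^{3(g-1)}$; this is a well-known result on real Teichm\"uller spaces, going back to work of Klein, with a modern treatment by Earle and others, and it is not reproved here but cited. This is the only non-elementary input to the argument.

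Once that is in hand, I would use Lemma \ref{lem2} to produce a canonical section of the projection $\Phi_0 \colon \mathcal{P}(S)^F \longrightarrow \mathcal{T}(S)^f$ from \eqref{ep0}. Namely, sending each $J \in \mathcal{T}(S)^f$ to the projective structure on $(S,J)$ determined by the uniformization theorem gives a well-defined map $u\colon \mathcal{T}(S)^f \longrightarrow \mathcal{P}(S)^F$ with $\Phi_0 \circ u = \mathrm{Id}$, and the section $u$ is continuous in $J$ because the uniformization depends continuously on the complex structure.

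Using $u$ as an origin, the affine bundle $\mathcal{P}(S)^F \to \mathcal{T}(S)^f$ of Lemma \ref{affinebun} is trivialized as the total space of the real cotangent bundle $T^*\bigl(\mathcal{T}(S)^f\bigr)$: the map $P \longmapsto \bigl(\Phi_0(P),\, P - u(\Phi_0(P))\bigr)$ is a homeomorphism from $\mathcal{P}(S)^F$ onto the total space of this real vector bundle, with the fiberwise difference landing in $H^0\bigl(X, K^{\otimes 2}_X\bigr)^\sigma$. Because $\mathcal{T}(S)^f$ is homeomorphic to Euclidean space and in particular contractible (and paracompact), any real vector bundle over it is trivial, so the total space of $T^*\bigl(\mathcal{T}(S)^f\bigr)$ is homeomorphic to $\mathbb{R}^{3(g-1)}\times \mathbb{R}^{3(g-1)} = \mathbb{R}^{6(g-1)}$. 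Connectedness is then immediate, and the proposition follows.

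The main obstacle is the input about $\mathcal{T}(S)^f$ itself being connected and Euclidean; once this is accepted the remaining argument is essentially a trivialization of an affine bundle by a distinguished section, with the uniformization section playing the crucial role of making the construction canonical and compatible with the real structure.
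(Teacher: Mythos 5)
Your proposal follows the same route as the paper: cite the classical fact that ${\mathcal T}(S)^f$ is connected and homeomorphic to Euclidean space (Earle, Kravetz), then conclude via the affine bundle structure of Lemma \ref{affinebun}. The paper leaves the final step implicit where you make it explicit with the uniformization section and the triviality of vector bundles over a contractible paracompact base, but the argument is essentially identical and correct.
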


\begin{proof}
The manifold ${\mathcal T}(S)^f$ is connected, and it is homeomorphic to the Euclidean space
(see~\cite[p.~122]{Ea} and \cite{Kr}). On the other hand, ${\mathcal P}(S)^F$ is an affine bundle over
${\mathcal T}(S)^f$ for the vector bundle $T^*{\mathcal T}(S)^f$ from Lemma \ref{affinebun}, and hence
the proposition follows.
\end{proof}

Since $F$ is an anti-holomorphic involution, Corollary \ref{cor1} gives the following.

\begin{Corollary}\label{cor2}
Let $\Theta^F_{\mathcal P}$ be the restriction of the holomorphic symplectic form $\Theta_{\mathcal P}$ to
${\mathcal P}(S)^F$. Then the real part $\operatorname{Re}\bigl(\Theta^F_{\mathcal P}\bigr)$ satisfies the equation
\[
\operatorname{Re}\bigl(\Theta^F_{\mathcal P}\bigr) = 0,
\]
and the imaginary part $\operatorname{Im}\bigl(\Theta^F_{\mathcal P}\bigr)$ is a real symplectic form on ${\mathcal P}(S)^F$.
\end{Corollary}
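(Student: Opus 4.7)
The plan is to derive both assertions directly from Corollary \ref{cor1}, namely $F^*\Theta_{\mathcal P} = -\overline{\Theta}_{\mathcal P}$, by restricting to the fixed locus and analyzing the eigenspace decomposition of ${\rm d}F$.

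First I would fix a point $p \in {\mathcal P}(S)^F$ and observe that ${\rm d}F_p$ is a real-linear involution of $T_p{\mathcal P}(S)$ that anticommutes with the complex structure $J$ (since $F$ is anti-holomorphic). Writing the $\pm 1$-eigenspace decomposition as $T_p{\mathcal P}(S) = T^+\oplus T^-$, the subspace $T^+$ is the real tangent space $T_p{\mathcal P}(S)^F$, and $T^- = J\,T^+$. For any $u,v \in T^+$, the identity $({\rm d}F_p)u = u$, $({\rm d}F_p)v = v$ combined with Corollary \ref{cor1} gives
\[
\Theta_{\mathcal P}(u,v) = (F^*\Theta_{\mathcal P})(u,v) = -\overline{\Theta_{\mathcal P}(u,v)},
\]
so $\Theta_{\mathcal P}(u,v)$ is purely imaginary. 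This proves $\operatorname{Re}(\Theta^F_{\mathcal P}) = 0$ and yields $\Theta^F_{\mathcal P} = {\rm i}\operatorname{Im}(\Theta^F_{\mathcal P})$.

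For the second statement I would verify the three defining properties of a real symplectic form on ${\mathcal P}(S)^F$. Being a real-valued $2$-form and closedness both follow immediately: $\operatorname{Im}(\Theta^F_{\mathcal P})$ is the imaginary part of the pullback of a closed form, and $\rm d$ commutes with both pullback and the operation of taking imaginary parts. The only nontrivial property is non-degeneracy. Given any non-zero $u \in T^+$, suppose for contradiction that $\Theta^F_{\mathcal P}(u,v) = 0$ for all $v \in T^+$. Since $\Theta_{\mathcal P}$ is a holomorphic $(2,0)$-form, one has the $\mathbb{C}$-linearity relation $\Theta_{\mathcal P}(u,Jv) = {\rm i}\,\Theta_{\mathcal P}(u,v)$, so the vanishing propagates to all of $T^- = J\,T^+$, hence to all of $T_p{\mathcal P}(S)$; this contradicts the non-degeneracy of the holomorphic symplectic form~$\Theta_{\mathcal P}$. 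Therefore $\Theta^F_{\mathcal P}$ is non-degenerate on $T^+$, and since $\Theta^F_{\mathcal P} = {\rm i}\operatorname{Im}(\Theta^F_{\mathcal P})$, non-degeneracy transfers to the imaginary part.

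I expect the only genuinely delicate step to be the non-degeneracy argument, where one must be careful that the holomorphic $2$-form relation $\Theta_{\mathcal P}(\cdot,J\,\cdot) = {\rm i}\,\Theta_{\mathcal P}(\cdot,\cdot)$ is used correctly to pass from vanishing on $T^+$ to vanishing on the whole tangent space; everything else is a routine bookkeeping of real and imaginary parts under the identity from Corollary \ref{cor1}.
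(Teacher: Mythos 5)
Your proposal is correct and follows exactly the route the paper intends: the paper gives no written proof, merely asserting that the corollary is immediate from Corollary \ref{cor1} together with the anti-holomorphicity of $F$, and your argument supplies precisely those details (the eigenspace decomposition $T_p{\mathcal P}(S)=T^+\oplus J\,T^+$ under ${\rm d}F_p$, the purely imaginary values of $\Theta_{\mathcal P}$ on $T^+$, and the propagation of non-degeneracy via complex bilinearity of the $(2,0)$-form). Nothing further is needed.
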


\subsection{Projective structures with Fuchsian monodromy}

It should be mentioned that the notion of {\em real projective structure} is used in the literature also in a
different context than ours above, see, for example, \cite{Fa} and also \cite{Hel, Tak,Te}. In these papers, a real projective structure
is a projective structure with Fuchsian monodromy.
In our notation, projective structures with Fuchsian monodromy are given by certain components of
\[\{P\in\mathcal P(S) \mid \mu(P) = b\circ\mu(P)\},\]
where $b$ is the involution in \eqref{eb}.
Examples are given by the uniformization of (any) Riemann surface, but there are others obtained by grafting. In fact, it was shown by Goldman \cite{Go2} that any projective structure with Fuchsian monodromy is obtained by the process of $2\pi$-grafting of the projective structure obtained from
the uniformization of an appropriate Riemann surface.

\section[Real and quaternionic SL(r,C)-opers]{Real and quaternionic $\boldsymbol{{\rm SL}(r,\mathbb{C})}$-opers}\label{realoper}

Having studied anti-holomorphic involutions on character varieties, projective structures and Teichm\"uller space, we shall now consider
the induced actions on different moduli spaces related to them, and in particular, on ${\rm SL}(r,\mathbb{C})$-opers.

\subsection{Real and quaternionic bundles}\label{invo}
As in the previous sections, consider a pair $(X, \sigma_X)$ of a compact connected Riemann surface $X$ of genus
$ g \geq 2$, and an anti-holomorphic involution
\begin{equation}\label{e18}
\sigma_X\colon\ X \longrightarrow X.
\end{equation}
For any complex vector bundle $E$ on $X$, let $\overline{E}$ denote the
complex vector bundle on $X$ whose underlying real vector bundle is identified with $E$, while the multiplication
by $\sqrt{-1}$ on $\overline{E}$ coincides with the multiplication by $-\sqrt{-1}$ on $E$.

\begin{Lemma}For any holomorphic vector bundle $V$ on $X$, there is a holomorphic structure
on the $C^\infty$ vector bundle $\sigma^*_X \overline{V}$ given by the holomorphic structure on $V$.
\end{Lemma}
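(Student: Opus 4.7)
The plan is to exhibit a holomorphic structure on $\sigma^*_X \overline{V}$ by means of transition functions, since the construction is cleanest there. Fix an open cover $\{U_i\}_{i \in I}$ of $X$ that trivializes $V$, with holomorphic transition functions $g_{ij}\colon U_i \cap U_j \longrightarrow \mathrm{GL}(r,\mathbb{C})$. Because the $C^\infty$ bundle $\overline{V}$ has the same underlying real bundle as $V$ with the opposite fiberwise complex structure, the same cover trivializes $\overline{V}$, but now with transition functions $\overline{g_{ij}}$, which are anti-holomorphic.

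Next I would pull back by $\sigma_X$. The cover $\bigl\{\sigma^{-1}_X(U_i)\bigr\}_{i \in I}$ trivializes $\sigma^*_X \overline{V}$, and the transition functions become the compositions $\overline{g_{ij}} \circ \sigma_X$ on $\sigma^{-1}_X(U_i)\cap \sigma^{-1}_X(U_j)$. Since $\sigma_X$ is anti-holomorphic and $\overline{g_{ij}}$ is anti-holomorphic, the composite is holomorphic. These functions manifestly satisfy the cocycle condition on triple intersections (it is pulled back from the cocycle condition for the $\overline{g_{ij}}$), and therefore they define a holomorphic vector bundle structure on the $C^\infty$ bundle $\sigma^*_X \overline{V}$.

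Finally I would verify that this holomorphic structure is independent of the trivializing cover: a different choice is related to the original one by holomorphic gluing data for $V$, which conjugates to anti-holomorphic gluing data for $\overline{V}$, which in turn pulls back by $\sigma_X$ to holomorphic gluing data for $\sigma^*_X \overline{V}$. Equivalently, one can phrase the whole construction in terms of Dolbeault operators: if $\overline{\partial}_V$ is the operator on $V$, then $\sigma^*_X \bigl(\overline{\overline{\partial}_V}\bigr)$ defines a $(0,1)$-connection on $\sigma^*_X \overline{V}$ whose $(0,2)$-curvature vanishes for the same reason the transition functions are holomorphic, yielding the same holomorphic structure by the Newlander--Nirenberg theorem. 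The only real point to watch is to keep track of the distinction between the conjugate complex structure on the fibers of $\overline{V}$ and the anti-holomorphic pullback by $\sigma_X$; the two conjugations cancel, which is precisely what makes the resulting transition functions holomorphic.
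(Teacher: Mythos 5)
Your proof is correct, but it is organized differently from the paper's. The paper avoids trivializations entirely: it simply declares a local $C^\infty$ section $s$ of $\sigma^*_X\overline{V}$ over $U$ to be holomorphic precisely when the corresponding section $\widetilde{s}$ of $V$ over $\sigma_X(U)$ (obtained from the identification $V = \overline{V}$ as real bundles) is holomorphic; this defines the sheaf of holomorphic sections in one stroke, with no cover to choose and hence no well-definedness to check. You instead build the structure from holomorphic transition functions $\overline{g_{ij}\circ\sigma_X}$, which forces you to verify the cocycle condition and independence of the trivializing cover -- both of which you do correctly, and your observation that the two conjugations (fiberwise and base) cancel is exactly the mechanism underlying the paper's definition as well. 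Your approach has the advantage of making the holomorphic bundle completely explicit in \v{C}ech terms (useful, e.g., for computing $\sigma^*_X\overline{V}$ for specific $V$), while the paper's sheaf-theoretic definition is shorter and manifestly canonical; the Dolbeault-operator reformulation you sketch at the end is a third equivalent packaging and is also fine.
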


\begin{proof}
We shall construct the holomorphic structure on $\sigma^*_X \overline{V}$ in the following way. Consider a
$C^\infty$ section $s$ of $\sigma^*_X\overline{V}$ defined over an open subset
$U \subset X$. So $s$ gives a $C^\infty$ section of $\overline{V}$ over~$\sigma_X(U)$.
As $V = \overline{V}$ as real bundles, this section of $\overline{V}$ over $\sigma_X(U)$
gives a $C^\infty$ section of $V$, which we denote by $\widetilde{s}$, over $\sigma_X(U)$.
The holomorphic structure on $\sigma^*_X \overline{V}$ is defined by the following condition:
Any section~$s$ of $\sigma^*_X\overline{V}$ as above is holomorphic if and only if the corresponding section~$\widetilde{s}$ of $V$ is holomorphic.\end{proof}

Recall that a theta characteristic on a compact connected Riemann surface $Y$ is a holomorphic line
bundle $K^{1/2}_Y$ on $Y$ such that $K^{1/2}_Y\otimes K^{1/2}_Y$ is holomorphically isomorphic to
the holomorphic cotangent bundle $K_Y$. There are exactly $2^{2k}$ theta characteristics on $Y$, where
$k$ is the genus of~$Y$.

It is known that there are theta characteristics $K^{1/2}_X$ on $X$ such that $\sigma^*_X
\overline{K^{1/2}_X}$ is holomorphically isomorphic to $K^{1/2}_X$ (see \cite[p.~61, Remark
2]{At2}).

Take a holomorphic line bundle $\mathbf L$ on $X$ such that $\sigma^*_X \overline{\mathbf
L}$ is holomorphically isomorphic to $\mathbf L$. For any holomorphic isomorphism $h\colon
{\mathbf L} \longrightarrow \sigma^*_X \overline{\mathbf L}$, the composition of
homomorphisms $\bigl(\sigma^*_X \overline{h}\bigr)\circ h$ is a holomorphic automorphism of ${\mathbf
L}$. Multiplying $h$ by a suitable nonzero complex number, we may ensure that $\bigl(\sigma^*_X
\overline{h}\bigr)\circ h$ is either the identity map ${\rm Id}_{\mathbf L}$ of $\mathbf L$ or
it is $-{\rm Id}_{\mathbf L}$. The holomorphic line bundle $\mathbf L$ is called {\it real}
(respectively, \textit{quaternionic}) if $\bigl(\sigma^*_X \overline{h}\bigr)\circ h$ can be made to
be ${\rm Id}_{\mathbf L}$ (respectively, $-{\rm Id}_{\mathbf L}$); see \cite[p.~206,
Definition 3.3]{BHH}.

Given a theta characteristic $K^{1/2}_X$ on $X$ such that $\sigma^*_X \overline{K^{1/2}_X}$
is holomorphically isomorphic to~$K^{1/2}_X$, since $\text{degree}\bigl(K^{1/2}_X\bigr) = g-1$, the following two hold:
\begin{itemize}\itemsep=0pt
\item[(1)] Assume that $\sigma_X$ does not have any fixed point. If $g$ is even, then $K^{1/2}_X$ must be quaternionic
because $\text{degree}\bigl(K^{1/2}_X\bigr)$ is odd \cite[p.~210, Proposition 4.2]{BHH}.

\item[(2)] Assume that $\sigma_X$ has fixed points. Then $K^{1/2}_X$ must be real (see \cite[p.~210, Section 4.2]{BHH}).
\end{itemize}

\begin{Definition}\label{dtc}
For simplicity, we shall denote the dual line bundle $\bigl(K^{1/2}_X\bigr)^*$ by $\mathcal L$, and thus one has that $\mathcal L$ is
isomorphic to $\sigma^*_X \overline{\mathcal L}$. We note that $\mathcal L$ is real
(respectively, quaternionic) if and only if $K^{1/2}_X$ is real (respectively,
quaternionic).
\end{Definition}

\subsection{Definition of opers}

We shall briefly recall here the construction of the jet bundles $J^m(W)$, $m \geq 0$, of
a holomorphic vector bundle $W$ on $X$. For $i = 1, 2$, let
\begin{equation}\label{z1}
p_i\colon\ X\times X \longrightarrow X
\end{equation}
be the projection to the $i$-th factor, and let
\begin{equation}\label{z2}
\Delta := \{(x, x) \in X\times X \mid x \in X\} \subset X\times X
\end{equation}
be the reduced diagonal divisor. Then for any $m \geq 0$, one can define the associated Jet bundle as
\begin{equation}\label{dj}
J^m(W) := p_{1*}((p^*_2W)/(p^*_2W\otimes
{\mathcal O}_{X\times X}(-(m+1)\Delta))) \longrightarrow X.
\end{equation}
The vector bundle $J^m(W)$ fits in the short exact sequence of holomorphic vector bundles
\begin{equation}\label{ej}
0 \longrightarrow K^{\otimes (m+1)}_X\otimes W \longrightarrow J^{m+1}(W)
 \longrightarrow J^m(W) \longrightarrow 0.
\end{equation}
Using \eqref{dj}, it is straightforward to deduce that there is a natural holomorphic
isomorphism
\begin{equation}\label{dj2}
\sigma^*_X \overline{J^m(W)} \stackrel{\sim}{\longrightarrow}
J^m\bigl(\sigma^*_X\overline{W}\bigr).
\end{equation}

\begin{Lemma}\label{4.7}
Take a holomorphic line bundle $\mathcal L$ on $X$ such that ${\mathcal L}\otimes{\mathcal L} = TX$. The determinant
line bundle $\det {\rm Sym}^{r-1}\bigl(J^1(\mathcal L)\bigr)$ of the $(r-1)$-th symmetric product is
\begin{equation}\label{f1}
\det {\rm Sym}^{r-1}\bigl(J^1(\mathcal L)\bigr) = \bigwedge\nolimits^r {\rm Sym}^{r-1}\bigl(J^1(\mathcal L)\bigr)
 = {\mathcal O}_X.
\end{equation}
\end{Lemma}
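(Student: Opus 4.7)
The plan is to reduce the computation of $\det \operatorname{Sym}^{r-1}(J^1(\mathcal L))$ to the computation of $\det J^1(\mathcal L)$, using the standard fact that for a rank~$2$ vector bundle $V$ one has $\det \operatorname{Sym}^{r-1}(V) = (\det V)^{\otimes r(r-1)/2}$.

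First I would apply the jet short exact sequence \eqref{ej} with $m = 0$ and $W = \mathcal L$, namely
\[
0 \longrightarrow K_X\otimes \mathcal L \longrightarrow J^1(\mathcal L) \longrightarrow J^0(\mathcal L) = \mathcal L \longrightarrow 0,
\]
which gives
\[
\det J^1(\mathcal L) \;=\; (K_X\otimes \mathcal L)\otimes \mathcal L \;=\; K_X\otimes \mathcal L^{\otimes 2}.
\]
Since by hypothesis $\mathcal L\otimes \mathcal L = TX = K_X^{-1}$, this simplifies to $\det J^1(\mathcal L) = \mathcal O_X$.

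Next I would invoke the general identity for the top exterior power of a symmetric power of a rank~$2$ bundle: if $V$ has Chern roots $a$ and $b$, then $\operatorname{Sym}^{r-1}(V)$ has Chern roots $(r-1-i)a + ib$ for $i = 0,\dots,r-1$, so
\[
c_1\bigl(\operatorname{Sym}^{r-1}(V)\bigr) \;=\; \tfrac{r(r-1)}{2}(a+b) \;=\; \tfrac{r(r-1)}{2}\,c_1(\det V).
\]
Applying this to $V = J^1(\mathcal L)$ and using the previous step yields $\det \operatorname{Sym}^{r-1}(J^1(\mathcal L)) = (\det J^1(\mathcal L))^{\otimes r(r-1)/2} = \mathcal O_X$, as required.

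There is no real obstacle here; both ingredients are standard. The only mild point of care is the verification that the determinant formula for $\operatorname{Sym}^{r-1}$ of a rank~$2$ bundle holds at the level of isomorphism of line bundles (not merely of Chern classes), but this follows either from the splitting principle or directly by choosing a local frame, writing $\operatorname{Sym}^{r-1}(V)$ in the induced basis, and computing the determinant of the transition matrices, which is a polynomial in the transition data of $V$ whose total degree in the determinant yields the exponent $r(r-1)/2$.
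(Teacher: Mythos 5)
Your proposal is correct and follows essentially the same route as the paper: compute $\det J^1(\mathcal L) = K_X\otimes\mathcal L^{\otimes 2} = \mathcal O_X$ from the jet exact sequence, then apply the identity $\det \operatorname{Sym}^{r-1}(V) = (\det V)^{\otimes r(r-1)/2}$ for a rank~$2$ bundle. The only difference is that you justify this last identity (via Chern roots / transition matrices), whereas the paper simply asserts it.
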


\begin{proof}
{}From \eqref{ej}, it follows that the line bundle $\det J^1(\mathcal L) :=
\bigwedge^2 J^1(\mathcal L)$ is
\begin{equation}\label{e4}
\det J^1(\mathcal L) = K_X\otimes {\mathcal L}\otimes {\mathcal L} = K_X\otimes TX = {\mathcal O}_X.
\end{equation}
Since $\det {\rm Sym}^{r-1}\bigl(J^1(\mathcal L)\bigr) = \bigl(\det J^1(\mathcal L)\bigr)^{\otimes r(r-1)/2}$, the
isomorphism in \eqref{f1} follows from \eqref{e4} for every $r \geq 2$.
\end{proof}

Via the notion of jet bundles, we can now recall from
\cite{BD} the definition of an ${\rm SL}(r, {\mathbb C})$-oper on Riemann surfaces.

\begin{Definition}\label{de-o}
An ${\rm SL}(r, {\mathbb C})$-oper on a Riemann surface $X$ of genus
$g \geq 2$ is a holomorphic connection $D$ on
${\rm Sym}^{r-1}\bigl(J^1(\mathcal L)\bigr)$ such that the connection on $\det {\rm
Sym}^{r-1}\bigl(J^1(\mathcal L)\bigr)$ induced by $D$ is the trivial connection on ${\mathcal O}_X$.
\end{Definition}

\begin{Remark}\label{rem-th}
Let ${\mathcal L}'$ be another holomorphic line bundle on $X$ such that
${\mathcal L}'\otimes{\mathcal L}' = TX$. So the holomorphic line bundle $\xi := {\mathcal L}'\otimes {\mathcal L}^*$
satisfies the equation $\xi\otimes\xi = {\mathcal O}_X$. Therefore, $\xi\otimes\xi$ has a unique holomorphic connection
$D_0$ with trivial monodromy. In other words, $D_0$ is the trivial connection. There is a unique holomorphic connection $D_\xi$
on $\xi$ such that the connection on $\xi\otimes\xi$ induced by $D_\xi$ coincides with $D_0$. Using $D_\xi$, there is a canonical
isomorphism
\[
\phi_\xi\colon\ J^1({\mathcal L})\otimes\xi \stackrel{\sim}{\longrightarrow} J^1({\mathcal L}').
\]
To construct $\phi_\xi$, let $\underline{\xi}$ denote the locally constant sheaf on $X$ defined by the sheaf of flat sections of
$\xi$ for the connection $D_\xi$. Consider the natural homomorphism
\[
J^1({\mathcal L})\otimes_{\mathbb C}\underline{\xi} \longrightarrow J^1({\mathcal L}').
\]
This homomorphism extends uniquely to $\phi_\xi$. Note that $\phi_\xi$ produces a holomorphic isomorphism
\begin{gather}\label{phir}
\phi^r_\xi\colon\ {\rm Sym}^{r-1}\bigl(J^1({\mathcal L})\bigr)\otimes \xi^{\otimes r}
 \stackrel{\sim}{\longrightarrow} {\rm Sym}^{r-1}\bigl(J^1({\mathcal L}')\bigr).
\end{gather}
The connection $D_\xi$ on $\xi$ induces a connection $D^r_\xi$ on $\xi^{\otimes r}$. Therefore, the isomorphism $\phi^r_\xi$ in
\eqref{phir} produces a bijection between the space of holomorphic connections on ${\rm Sym}^{r-1}\bigl(J^1({\mathcal L})\bigr)$
and the space of holomorphic connections on ${\rm Sym}^{r-1}\bigl(J^1({\mathcal L}')\bigr)$. Consequently,
the space of holomorphic connections $D$ on ${\rm Sym}^{r-1}\bigl(J^1(\mathcal L)\bigr)$ such that the connection on $\det {\rm
Sym}^{r-1}\bigl(J^1(\mathcal L)\bigr)$ induced by~$D$ is the trivial connection on ${\mathcal O}_X$
(see Definition \ref{de-o}) does not depend on the choice of the line bundle $\mathcal L$.
\end{Remark}

Let ${\mathcal C}_X(r)$ denote the space of all holomorphic connections on ${\rm Sym}^{r-1}\bigl(J^1(\mathcal L)\bigr)$ such that their induced connection on $\det {\rm Sym}^{r-1}\bigl(J^1(\mathcal
L)\bigr)$ is the trivial connection on ${\mathcal O}_X$. Let
\[
\text{Aut}\bigl({\rm
Sym}^{r-1}\bigl(J^1(\mathcal L)\bigr)\bigr)\]
 denote the group of all holomorphic automorphisms of ${\rm
Sym}^{r-1}\bigl(J^1(\mathcal L)\bigr)$. If $D$ is a holomorphic connection on ${\rm
Sym}^{r-1}\bigl(J^1(\mathcal L)\bigr)$ such that the connection on the line bundle
\[
\det {\rm
Sym}^{r-1}\bigl(J^1(\mathcal L)\bigr)\]
 induced by $D$ is the trivial connection, and
\[
A \in \text{Aut}\bigl({\rm Sym}^{r-1}\bigl(J^1(\mathcal L)\bigr)\bigr),
\]
then %the connection
$(A\otimes{\rm
Id}_{K_X})\circ D\circ A^{-1}$ also has the property that the connection on $\det {\rm
Sym}^{r-1}\bigl(J^1(\mathcal L)\bigr)$ induced by it is the trivial connection. Indeed, this follows immediately from
the fact that the holomorphic automorphisms of a line bundle $\xi$ act trivially on the space of all
holomorphic connections on $\xi$.

\begin{Definition}\label{deo}
The moduli space of ${\rm SL}(r, {\mathbb C})$-opers on $X$, which will be denoted by ${\rm OSL}_X(r)$, is defined as follows
\begin{equation}\label{f2}
{\rm OSL}_X(r) := {\mathcal C}_X(r)/\text{Aut}\bigl({\rm
Sym}^{r-1}\bigl(J^1(\mathcal L)\bigr)\bigr).
\end{equation}\end{Definition}

{}Using the isomorphism $\phi^r_\xi$ in \eqref{phir}, we conclude that
\[
\text{Aut}\bigl({\rm Sym}^{r-1}\bigl(J^1({\mathcal L}')\bigr)\bigr)
 = \text{Aut}\bigl({\rm Sym}^{r-1}\bigl(J^1(\mathcal L)\bigr)\bigr).\]
 Therefore, from Remark \ref{rem-th} it follows that
${\rm OSL}_X(r)$ in Definition \ref{deo} is independent of the choice of the line bundle $\mathcal L$.

\begin{Remark}
We need to explain the reason for the assumption that the genus of $X$ is at least two. On ${\mathbb C}{\mathbb P}^1$ there
is no nontrivial ${\rm SL}(r, {\mathbb C})$-oper. If $\text{genus}(X) = 1$, then the underlying holomorphic vector
bundle for an ${\rm SL}(r, {\mathbb C})$-oper is \textit{not} unique up to tensoring with line bundles of order two (unlike
in the case of higher genus Riemann surfaces). For example, when ${\text{genus}(X) = 1}$, both
${\mathcal O}_X\oplus {\mathcal O}_X$ and the unique nontrivial extension of ${\mathcal O}_X$ by ${\mathcal O}_X$ admit an~${\rm SL}(2, {\mathbb C})$-oper structure.
\end{Remark}

\subsection{Differential operators and opers}

For completion, we shall quickly recall the definition of holomorphic differential operators. Given holomorphic
vector bundles $V$ and $W$ on $X$, and a nonnegative integer $d$, define the holomorphic vector bundle
\[
\operatorname{Diff}^d_X(W, V) := V\otimes J^d(W)^* = \operatorname{Hom}\bigl(J^d(W), V\bigr).
\]
A holomorphic differential operator of order $d$ from $W$ to $V$ is a holomorphic
section of the vector bundle $\operatorname{Diff}^d_X(W, V)$.
Consider the homomorphism $K^{\otimes d}_X\otimes W \hookrightarrow J^d(W)$
in \eqref{ej}. Using its dual%
\[
\nu\colon\ J^d(W)^* \longrightarrow \bigl(K^{\otimes d}_X\otimes W\bigr)^*,
\]
we get a surjective homomorphism
\begin{equation}\label{es}
\operatorname{Diff}^d_X(W, V) = V\otimes J^d(W)^*
 \xrightarrow{ {\rm Id}\otimes\nu } V\otimes \bigl(K^{\otimes d}_X\otimes W\bigr)^*
 = (TX)^{\otimes d}\otimes \operatorname{Hom}(W, V),
\end{equation}
which is known as the {\it symbol map}.

The following notation will be used: For a line bundle $\xi$ on $X$ and a positive integer $d$, the line bundle
$(\xi^*)^{\otimes d}$ will be denoted by $\xi^{\otimes -d}$. Also $\xi^0$ will denote the trivial line bundle
${\mathcal O}_X$.

When considered on
${\rm Diff}^r_X\bigl({\mathcal L}^{\otimes (r-1)},
{\mathcal L}^{-r-1}\bigr)$, the symbol map in \eqref{es} sends it to
\[
(TX)^{\otimes r}\otimes \operatorname{Hom}\bigl({\mathcal L}^{\otimes (r-1)}, {\mathcal L}^{-r-1}\bigr)
 = (TX)^{\otimes r}\otimes {\mathcal L}^{-2r} = {\mathcal O}_X,
\]
where we use for a holomorphic line bundle $A$ and a positive integer $d$ the convention ${A^{-d}\!\!=\!(\!A^*\!)^{\otimes d}}$\!.

The following proposition is well known.

\begin{Proposition}\label{f3}
The moduli space of ${\rm SL}(r, {\mathbb C})$-opers on $X$ in \eqref{f2} is isomorphic to the subset
\begin{gather}
\mathcal U \subset H^0\bigl(X, {\rm Diff}^r_X\bigl({\mathcal L}^{\otimes (r-1)},
{\mathcal L}^{-r-1}\bigr)\bigr) = H^0\bigl(X,
{\rm Diff}^r_X\bigl({\mathcal L}^{\otimes (r-1)}, \bigl({\mathcal L}^{\otimes (r+1)}\bigr)^*\bigr)\bigr)\nonumber
\end{gather}
consisting of all differential operators $\delta \in H^0\bigl(X, {\rm Diff}^r_X\bigl({\mathcal
L}^{\otimes (r-1)}, \bigl({\mathcal L}^{\otimes (r+1)}\bigr)^*\bigr)\bigr)$ such that the symbol of $\delta$
is the constant function $1$ on $X$ and the sub-leading term of $\delta$ is vanishing
identically.\looseness=1
\end{Proposition}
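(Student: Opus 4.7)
The strategy I would follow is the classical Beilinson--Drinfeld correspondence between ${\rm SL}(r,\mathbb C)$-opers and scalar differential operators of a specific shape; for $r=2$ this specializes to the well-known identification of projective structures on $X$ with holomorphic quadratic differentials relative to a reference (the Schwarzian construction). The key geometric input is that the bundle $E:={\rm Sym}^{r-1}\bigl(J^1(\mathcal L)\bigr)$ carries a canonical decreasing filtration, inherited from the jet bundle exact sequence \eqref{ej} by taking symmetric powers, whose graded pieces are $\mathcal L^{\otimes (r-1-2i)}$ for $i=0,\ldots,r-1$ (using $\mathcal L\otimes\mathcal L=TX$); in particular its top quotient is $\mathcal L^{\otimes (r-1)}$ and its bottom sub-line bundle is $\mathcal L^{-(r-1)}$. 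Moreover, there is a natural identification $E\cong J^{r-1}\bigl(\mathcal L^{\otimes (r-1)}\bigr)$ compatible with the two filtrations, which allows the two descriptions to be used interchangeably.

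Using this identification, I would construct the map $\mathcal U\to{\rm OSL}_X(r)$ as follows. Given $\delta\in\mathcal U$, the jet exact sequence \eqref{ej}
\[
0\longrightarrow K_X^{\otimes r}\otimes\mathcal L^{\otimes (r-1)}\longrightarrow J^r\bigl(\mathcal L^{\otimes (r-1)}\bigr)\longrightarrow J^{r-1}\bigl(\mathcal L^{\otimes (r-1)}\bigr)\longrightarrow 0
\]
together with the hypothesis that the symbol of $\delta$ is $1$ implies that the restriction of $\delta$ to the sub-line bundle $K_X^{\otimes r}\otimes\mathcal L^{\otimes (r-1)}$ is the canonical isomorphism onto $\mathcal L^{-r-1}$. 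Hence $\delta$ provides a splitting of this short exact sequence, which is the same data as a holomorphic connection $D_\delta$ on $J^{r-1}(\mathcal L^{\otimes (r-1)})\cong E$. A local computation in a filtration-adapted frame identifies the sub-leading term of $\delta$ with the trace of the connection form of $D_\delta$, so the vanishing of the sub-leading term is equivalent to the induced connection on $\det E={\mathcal O}_X$ being the trivial connection, i.e.\ $D_\delta\in\mathcal C_X(r)$. The inverse map reverses this bijection: a connection $D\in\mathcal C_X(r)$ determines a splitting of the jet sequence, which defines a scalar operator of order $r$ from $\mathcal L^{\otimes (r-1)}$ to $\mathcal L^{-r-1}$ whose symbol is $1$ and whose sub-leading term vanishes because the trace of $D$ does.

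The step I expect to be the main obstacle is verifying that the assignment $D\mapsto\delta_D$ descends to the quotient by ${\rm Aut}(E)$ and induces a bijection onto $\mathcal U$. Concretely, one must check that for every automorphism $\phi\in{\rm Aut}(E)$, the connections $D$ and $\phi D\phi^{-1}$ yield the same scalar operator, and conversely that any two connections producing the same operator are ${\rm Aut}(E)$-conjugate; this comes down to analyzing how ${\rm Aut}(E)$ acts on splittings of the jet exact sequence under the identification $E\cong J^{r-1}(\mathcal L^{\otimes (r-1)})$. Alternatively, one may argue by structure: both $\mathcal U$ and ${\rm OSL}_X(r)$ are affine spaces modeled on the same vector space $\bigoplus_{i=2}^{r}H^0\bigl(X,K_X^{\otimes i}\bigr)$, and the construction above is affine with underlying linear isomorphism, so the bijection follows.
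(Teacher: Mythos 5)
Your proposal is correct and follows essentially the same route as the paper, which treats the statement as well known and only sketches the two mutually inverse maps: a differential operator with unit symbol corresponds, via its sheaf of solutions (equivalently, via the splitting of the jet sequence that you describe), to a holomorphic connection on $J^{r-1}\bigl({\mathcal L}^{\otimes (r-1)}\bigr)\cong {\rm Sym}^{r-1}\bigl(J^1(\mathcal L)\bigr)$, with the vanishing of the sub-leading term matching the triviality of the induced determinant connection. The only points where you go beyond the paper's sketch are the explicit trace computation and the verification of descent to the ${\rm Aut}\bigl({\rm Sym}^{r-1}\bigl(J^1(\mathcal L)\bigr)\bigr)$-quotient, both of which are routine and correctly flagged.
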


\begin{Remark}One should note that the above condition in Proposition \ref{f3} that the sub-leading term of $\delta$ vanishes
identically makes sense intrinsically. Indeed, this condition is equivalent to the condition
that the local system on $X$ given by the sheaf of solutions of $D$ is an ${\rm SL}(r,
{\mathbb C})$-system (meaning the $r$-th exterior product of it is the constant sheaf with
stalk $\mathbb C$).\end{Remark}

We shall briefly describe here the isomorphism
\[{\rm OSL}_X(r) \stackrel{\sim}
{\longrightarrow} \mathcal U \subset H^0\bigl(X, {\rm Diff}^r_X
\bigl({\mathcal L}^{\otimes (r-1)}, \bigl({\mathcal L}^{\otimes (r+1)}\bigr)^*\bigr)\bigr)\]
in Proposition \ref{f3} in more detail. Given a differential operator
\[
\delta \in H^0\bigl(X, {\rm Diff}^r_X\bigl({\mathcal
L}^{\otimes (r-1)}, \bigl({\mathcal L}^{\otimes (r+1)}\bigr)^*\bigr)\bigr)\]
 such that the symbol of $\delta$ is the constant function
$1$ on $X$, its sheaf of solutions produces a~flat vector bundle on $X$. The holomorphic
vector bundle underlying this flat bundle is isomorphic to $J^{r-1}(\mathcal L)$. In other
words, the sheaf of solutions of $\delta$ produces a holomorphic connection $D$ on
$J^{r-1}(\mathcal L)$. The holomorphic vector bundle $J^{r-1}(\mathcal L)$ is
holomorphically isomorphic to~${\rm Sym}^{r-1}\bigl(J^1(\mathcal L)\bigr)$. After fixing a
holomorphic isomorphism between $J^{r-1}(\mathcal L)$ and ${\rm Sym}^{r-1}\bigl(J^1(\mathcal
L)\bigr)$, the holomorphic connection on $D$ on $J^{r-1}(\mathcal L)$ gives a holomorphic
connection on ${\rm Sym}^{r-1}\bigl(J^1(\mathcal L)\bigr)$.

To describe the reverse map
\begin{equation}\label{z1b}
{\rm OSL}_X(r) \longrightarrow H^0\bigl(X, {\rm Diff}^r_X
\bigl({\mathcal L}^{\otimes (r-1)}, \bigl({\mathcal L}^{\otimes (r+1)}\bigr)^*\bigr)\bigr),
\end{equation}
take a holomorphic connection $D$ on ${\rm Sym}^{r-1}\bigl(J^1(\mathcal
L)\bigr)$ such that the connection on the line bundle $\det {\rm Sym}^{r-1}\bigl(J^1(\mathcal L)\bigr)$ induced by $D$ is
the trivial connection on ${\mathcal O}_X$. Using $D$, we will construct a~holomorphic
isomorphism between ${\rm Sym}^{r-1}\bigl(J^1(\mathcal L)\bigr)$ and $J^{r-1}(\mathcal L)$. Consider
the projection
\[
\psi\colon\ {\rm Sym}^{r-1}\bigl(J^1(\mathcal L)\bigr) \longrightarrow {\rm Sym}^{r-1}\bigl(J^0(\mathcal L)\bigr) =
{\mathcal L}^{\otimes (r-1)},
\]
which is the $(r-1)$-th symmetric power of the natural projection $J^1(\mathcal L)
\longrightarrow \mathcal L$ (see the exact sequence in \eqref{ej}). Take any $x \in X$ and $v \in {\rm
Sym}^{r-1}\bigl(J^1(\mathcal L)\bigr)_x$. Let $\widetilde{v}$ denote the unique flat section of ${\rm
Sym}^{r-1}\bigl(J^1(\mathcal L)\bigr)\big\vert_U$, for the connection $D$, defined on a simply
connected open neighborhood $x \in U \subset X$, such that $\widetilde{v}(x) =
v$. Restricting $\psi(\widetilde{v})$ to the $(r-1)$-th order infinitesimal neighborhood of
$x$ we get an element $\widetilde{v}' \in J^{r-1}\bigl({\mathcal L}^{\otimes (r-1)}\bigr)_x$.
Consequently, we have a holomorphic map
\begin{align}
\widetilde{\psi}\colon\ {\rm Sym}^{r-1}\bigl(J^1(\mathcal L)\bigr) & \longrightarrow J^{r-1}\bigl({\mathcal L}^{\otimes (r-1)}\bigr),\nonumber\\
v& \longmapsto \widetilde{v}',\label{psn}
\end{align}
which is in fact an isomorphism. Let $\bigl(\widetilde{\psi}^{-1}\bigr)^*D$
be the holomorphic connection on the vector bundle $J^{r-1}\bigl({\mathcal L}^{\otimes (r-1)}\bigr)$ obtained by pulling
back the connection $D$ on ${\rm Sym}^{r-1}\bigl(J^1(\mathcal L)\bigr)$ using $\widetilde{\psi}^{-1}$.
Then there is a unique
\begin{equation}\label{a2}
\delta \in H^0\bigl(X, {\rm Diff}^r_X\bigl({\mathcal L}^{\otimes (r-1)},
\bigl({\mathcal L}^{\otimes (r+1)}\bigr)^*\bigr)\bigr)
\end{equation}
{\samepage such that
\begin{itemize}\itemsep=0pt
\item the symbol of $\delta$ is the constant function $1$ on $X$, and

\item the flat vector bundle on $X$ given by the sheaf of solutions of $\delta$ is isomorphic
to $\bigl(\widetilde{\psi}^{-1}\bigr)^*D$ on $J^{r-1}\bigl({\mathcal L}^{\otimes (r-1)}\bigr)$ equipped with the above
flat connection.
\end{itemize}}
Since $\bigl(\widetilde{\psi}^{-1}\bigr)^*D$ is a holomorphic $\text{SL}(r,{\mathbb C})$ connection, the
sub-leading term of $\delta$ vanishes identically as required.

\subsection[Real and quaternionic slices of SL(r,C)-opers]{Real and quaternionic slices of $\boldsymbol{{\rm SL}(r,\mathbb{C})}$-opers}

As before, $\mathcal L$ is a holomorphic line bundle on $X$ with ${\mathcal L}\otimes {\mathcal L} = TX$.
The Riemann surface is now equipped with an anti-holomorphic involution $\sigma_X$.

In order to define and study real and quaternionic slices of ${\rm SL}(r,\mathbb{C})$-opers, we shall consider certain
compatibility conditions with the anti-holomorphic involutions studied in Section \ref{invo}.
We shall begin by fixing a holomorphic isomorphism of line bundles
\begin{equation}\label{f4}
\widehat{\sigma}\colon\ {\mathcal L} \longrightarrow \sigma^*_X \overline{\mathcal L}
\end{equation}
such that $\bigl(\sigma^*_X \overline{\widehat{\sigma}}\bigr)\circ\widehat{\sigma} \in \pm {\rm Id}_{\mathcal L}$,
where $\sigma_X$ is the anti-holomorphic involution in \eqref{e18}. Note that such pairs $\bigl({\mathcal K}, \widehat{\sigma}\bigr)$
exist. The isomorphism $\widehat{\sigma}$ in \eqref{f4} above produces a holomorphic isomorphism%
\begin{gather}\label{s2}
\widehat{\sigma}^m_k\colon\ J^m\bigl({\mathcal L}^k\bigr) \longrightarrow
J^m\bigl(\sigma^*_X \overline{\mathcal L}^k\bigr)
 = \sigma^*_X \overline{J^m\bigl({\mathcal L}^k\bigr)}
\end{gather}
for all $m \geq 0$ and $k \in \mathbb Z$, where the second isomorphism in \eqref{s2} is given by
\eqref{dj2}. Moreover, since $\bigl(\sigma^*_X \overline{\widehat{\sigma}}\bigr)
\circ\widehat{\sigma} \in \pm {\rm Id}_{\mathcal L}$ (see \eqref{f4}), it follows that
\begin{equation}\label{s2a}
\bigl(\sigma^*_X \overline{\widehat{\sigma}^m_k}\bigr)\circ\widehat{\sigma}^m_k \in \pm {\rm Id}_{J^m({\mathcal L}^k)}.
\end{equation}
The homomorphism $\widehat{\sigma}^1_1$ defined through \eqref{s2} produces, in turn, a
holomorphic isomorphism of symmetric products
\begin{equation}\label{f5}
{\rm Sym}^{r-1}\bigl(\widehat{\sigma}^1_1\bigr)\colon\ {\rm Sym}^{r-1}\bigl(J^1(\mathcal L)\bigr)
\longrightarrow {\rm Sym}^{r-1}\bigl(\sigma^*_X
\overline{J^1(\mathcal L)}\bigr) = \sigma^*_X\overline{{\rm Sym}^{r-1}\bigl(J^1({\mathcal L})\bigr)}.
\end{equation}
{}From \eqref{s2a} for $m = 1$, we deduce that
\[
\bigl(\sigma^*_X \overline{{\rm Sym}^{r-1}\bigl(\widehat{\sigma}^1_1\bigr)}\bigr)\circ{\rm Sym}^{r-1}\bigl(\widehat{\sigma}^1_1\bigr) \in
\pm {\rm Id}_{{\rm Sym}^{r-1}(J^1(\mathcal L))}.
\]
Observe that $\bigl(\sigma^*_X \overline{{\rm Sym}^{r-1}\bigl(\widehat{\sigma}^1_1\bigr)}\bigr)\circ
{\rm Sym}^{r-1}\bigl(\widehat{\sigma}^1_1\bigr) =
{\rm Id}_{{\rm Sym}^{r-1}(J^1(\mathcal L))}$ whenever $r$ is an odd integer.

Recall that ${\mathcal C}_X(r)$ denotes the space of all holomorphic connections on ${\rm
Sym}^{r-1}\bigl(J^1(\mathcal L)\bigr)$ such that their induced connection on $\det {\rm Sym}^{r-1}\bigl(J^1(\mathcal
L)\bigr)$ is the trivial connection on ${\mathcal O}_X$. Let
\begin{equation}\label{f6}
\gamma\colon\ {\mathcal C}_X(r) \longrightarrow {\mathcal C}_X(r)
\end{equation}
be the anti-holomorphic map that sends any holomorphic connection $D$ to the
holomorphic connection on ${\rm
Sym}^{r-1}\bigl(J^1(\mathcal L)\bigr)$ given by the holomorphic connection ${\rm
Sym}^{r-1}\bigl(\widehat{\sigma}^1_1\bigr)^*\sigma^*_X\overline{D}$ through the
isomorphism in \eqref{f5}, where $\sigma^*_X\overline{D}$ is the connection on
$\sigma^*_X\overline{J^1(\mathcal L)}$ induced by $D$. It can be shown that $\gamma$ in \eqref{f6} is an
involution. Indeed, the automorphism of ${\rm Sym}^{r-1}\bigl(J^1(\mathcal L)\bigr)$ given by multiplication by $-1$ preserves any
connection on ${\rm Sym}^{r-1}\bigl(J^1(\mathcal L)\bigr)$. So even if the holomorphic line bundle
$\mathcal L$ is quaternionic, the map $\gamma$ remains to be of order two.

The group ${\mathbb Z}/2{\mathbb Z}$ acts on ${\mathcal C}_X(r)$ via the involution $\gamma$
in \eqref{f6}, and the group of automorphisms $\text{Aut}\bigl({\rm Sym}^{r-1}\bigl(J^1(\mathcal L)\bigr)\bigr)$ also acts on
${\mathcal C}_X(r)$ through \eqref{f2}. On the other hand, there is an anti-holomorphic group
automorphism of order two
\begin{gather}\label{f8}
\tau_r\colon\ \text{Aut}\bigl({\rm Sym}^{r-1}\bigl(J^1(\mathcal L)\bigr)\bigr) \longrightarrow
\text{Aut}\bigl({\rm Sym}^{r-1}\bigl(J^1(\mathcal L)\bigr)\bigr)
\end{gather}
that sends any $\beta \in \text{Aut}\bigl({\rm Sym}^{r-1}\bigl(J^1(\mathcal L)\bigr)\bigr)$ to
\[
\bigl({\rm Sym}^{r-1}\bigl(\widehat{\sigma}^1_1\bigr)\bigr)^{-1}\circ\bigl(\sigma^*_X\overline{\beta}\bigr)\circ
{\rm Sym}^{r-1}\bigl(\widehat{\sigma}^1_1\bigr) \in
\text{Aut}\bigl({\rm Sym}^{r-1}(J^1(\mathcal L))\bigr),
\]
where $\sigma^*_X\overline{\beta} \in \text{Aut}\bigl(\sigma^*_X\overline{{\rm Sym}^{r-1}
\bigl(J^1(\mathcal L)\bigr)}\bigr)$ is the automorphism given by $\beta$.

\begin{Lemma}\label{lem3}
The action of ${\mathbb Z}/2{\mathbb Z}$ on ${\mathcal C}_X(r)$ given by the involution $\gamma$
in \eqref{f6} descends to an involution
\begin{equation}\label{f7}
\beta\colon\ {\rm OSL}_X(r) \longrightarrow {\rm OSL}_X(r)
\end{equation}
of ${\mathbb Z}/2{\mathbb Z}$ on the quotient space of ${\rm SL}(r,\mathbb{C})$-opers
${\rm OSL}_X(r)$ $($see Definition {\rm \ref{deo}}$)$.
\end{Lemma}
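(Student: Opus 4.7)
The plan is to show that the already-established involution $\gamma$ of $\mathcal C_X(r)$ is equivariant with respect to the action of $\text{Aut}\bigl({\rm Sym}^{r-1}(J^1(\mathcal L))\bigr)$ on $\mathcal C_X(r)$, through the anti-holomorphic involution $\tau_r$ of \eqref{f8}. Once this equivariance is in place, $\gamma$ sends each $\text{Aut}$-orbit into an $\text{Aut}$-orbit and so descends to a well-defined map $\beta$ on the quotient \eqref{f2}; the descended map remains an involution because $\gamma$ already is one on $\mathcal C_X(r)$.

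The key step is to verify the identity $\gamma(A\cdot D)=\tau_r(A)\cdot\gamma(D)$ for every $A\in\text{Aut}\bigl({\rm Sym}^{r-1}(J^1(\mathcal L))\bigr)$ and every $D\in\mathcal C_X(r)$, where the group acts by $A\cdot D = (A\otimes{\rm Id}_{K_X})\circ D\circ A^{-1}$. Writing $\Sigma:={\rm Sym}^{r-1}\bigl(\widehat\sigma^1_1\bigr)$ and using that the assignment $\sigma_X^*\overline{(-)}$ is functorial with respect to composition and commutes with tensoring with ${\rm Id}_{K_X}$, I would expand $\gamma(A\cdot D)=\Sigma^{-1}\circ\sigma_X^*\overline{(A\cdot D)}\circ\Sigma$, insert $\Sigma\circ\Sigma^{-1}$ in two places, and recognize the central block as $\gamma(D)$ flanked by $\tau_r(A)^{\pm1}$, which is precisely the definition of $\tau_r(A)$ in \eqref{f8}. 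In parallel, I would check that $\gamma$ actually preserves the oper condition: since $\det\Sigma$ is a nowhere vanishing section of the trivial line bundle $\det{\rm Sym}^{r-1}(J^1(\mathcal L))=\mathcal O_X$ (Lemma~\ref{4.7}), and since $\sigma_X^*\overline{(-)}$ sends the trivial connection on $\mathcal O_X$ to itself, triviality of the induced determinant connection is preserved.

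To see that $\beta$ is an involution, I would invoke the remark already made by the authors that $\gamma$ itself has order two: a second application of $\gamma$ conjugates $D$ by $\bigl(\sigma_X^*\overline{\Sigma}\bigr)\circ\Sigma$, which by \eqref{s2a} equals $\pm{\rm Id}_{{\rm Sym}^{r-1}(J^1(\mathcal L))}$; in either sign, the action on a connection is trivial, because the scalar $-1$, viewed as a bundle automorphism, acts trivially on the affine space of connections. Passing to the quotient, $\beta^2={\rm Id}_{{\rm OSL}_X(r)}$ is then automatic.

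The main obstacle I anticipate is purely bookkeeping: one must keep precise track of where the extra $K_X$ tensor factor appears when transporting a connection across an isomorphism, and of how $\sigma_X^*$ and the conjugation $\overline{(-)}$ interact with the formation of the jet bundle (via \eqref{dj2}), the symmetric product, and composition of bundle morphisms. Once the conventions are aligned and \eqref{dj2} is used to freely identify $\sigma_X^*\overline{J^m(\mathcal L^k)}$ with $J^m(\sigma_X^*\overline{\mathcal L^k})$, the equivariance identity unwinds mechanically and no further input is required.
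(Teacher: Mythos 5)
Your proposal is correct and follows essentially the same route as the paper: the paper's proof consists precisely of the equivariance identity $\gamma\circ\psi(D)=\tau_r(\psi)\circ\gamma(D)$ for all $\psi\in\text{Aut}\bigl({\rm Sym}^{r-1}\bigl(J^1(\mathcal L)\bigr)\bigr)$ and $D\in{\mathcal C}_X(r)$, from which descent to the quotient \eqref{f2} is immediate, and the order-two property is handled exactly as you say, via the remark that $\bigl(\sigma^*_X\overline{\Sigma}\bigr)\circ\Sigma=\pm{\rm Id}$ acts trivially on connections. You simply spell out the bookkeeping that the paper leaves implicit.
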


\begin{proof}
The lemma follows from the above construction of $\gamma$ and from noticing that the
actions of ${\mathbb Z}/2{\mathbb Z}$ and ${\rm Aut}\bigl({\rm Sym}^{r-1}\bigl(J^1(\mathcal L)\bigr)\bigr)$
on ${\mathcal C}_X(r)$ are related as follows:
\[
\gamma\circ\psi (D) = \tau_r(\psi)\circ\gamma(D)
\]
for all $\psi \in {\rm Aut}\bigl({\rm Sym}^{r-1}\bigl(J^1(\mathcal L)\bigr)\bigr)$ and $D \in {\mathcal C}_X(r)$, where $\tau_r$
and $\gamma$ are constructed in \eqref{f8} and~\eqref{f6} respectively.
\end{proof}

\begin{Remark} Since $\gamma$ in \eqref{f6} is anti-holomorphic, it
follows immediately that $\beta$ in \eqref{f7} is also anti-holomorphic.
\end{Remark}

\begin{Definition}\label{de1}
An ${\rm SL}(r,{\mathbb C})$-{\it oper} on $X$ is in the real slice of ${\rm SL}(r,\mathbb{C})$-opers if it is a fixed point of the
involution $\beta$ in \eqref{f7} of Lemma \ref{lem3}.
\end{Definition}

In order to study the slice of real ${\rm SL}(r,{\mathbb C})$-{\it opers} on $X$, we shall construct a conjugate linear involution of $H^0\bigl(X, {\rm Diff}^r_X\bigl({\mathcal L}^{r-1},
{\mathcal L}^{-r-1}\bigr)\bigr)$.

\begin{Lemma}\label{bigb} There is a natural
conjugate linear involution \begin{equation}\label{f9}
{\mathcal B} \colon\ H^0\bigl(X, {\rm Diff}^r_X\bigl({\mathcal L}^{r-1}, {\mathcal L}^{-r-1}\bigr)\bigr) \longrightarrow
H^0\bigl(X, {\rm Diff}^r_X\bigl({\mathcal L}^{r-1}, {\mathcal L}^{-r-1}\bigr)\bigr)
\end{equation}
induced by the anti-holomorphic involution $\sigma$ of $X$.
\end{Lemma}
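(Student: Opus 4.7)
The plan is to construct $\mathcal{B}$ explicitly by transporting $\delta$ across $\sigma_X$ using the isomorphisms $\widehat{\sigma}^m_k$ from \eqref{s2}. Viewing a section $\delta \in H^0\bigl(X, {\rm Diff}^r_X\bigl({\mathcal L}^{r-1}, {\mathcal L}^{-r-1}\bigr)\bigr)$ as a holomorphic homomorphism of vector bundles $\delta\colon J^r({\mathcal L}^{r-1}) \longrightarrow {\mathcal L}^{-r-1}$, I would set
\[
\mathcal{B}(\delta) := \bigl(\widehat{\sigma}^0_{-r-1}\bigr)^{-1} \circ \sigma_X^*\overline{\delta} \circ \widehat{\sigma}^r_{r-1},
\]
where $\sigma_X^*\overline{\delta}\colon \sigma_X^*\overline{J^r({\mathcal L}^{r-1})} \longrightarrow \sigma_X^*\overline{{\mathcal L}^{-r-1}}$ is the holomorphic homomorphism obtained by composing the two anti-holomorphic operations of complex conjugation and $\sigma_X$-pullback applied to $\delta$.

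First I would check that $\mathcal{B}(\delta)$ is well-defined in the required space: the composite of two anti-holomorphic operations is holomorphic, and the outer holomorphic isomorphisms $\widehat{\sigma}^r_{r-1}$ and $\bigl(\widehat{\sigma}^0_{-r-1}\bigr)^{-1}$ translate the result back to a holomorphic homomorphism $J^r({\mathcal L}^{r-1}) \to {\mathcal L}^{-r-1}$. Conjugate linearity of $\mathcal{B}$ then follows at once from $\overline{\lambda \delta} = \overline{\lambda}\,\overline{\delta}$ for $\lambda \in {\mathbb C}$, together with the linearity of pullback and composition of bundle morphisms.

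The main step is verifying $\mathcal{B}^2 = {\rm Id}$. Applying $\mathcal{B}$ twice and using $\sigma_X \circ \sigma_X = {\rm Id}_X$ along with $\overline{\overline{\delta}} = \delta$, the innermost factors collapse to give
\[
\mathcal{B}^2(\delta) = \Bigl[\bigl(\widehat{\sigma}^0_{-r-1}\bigr)^{-1} \circ \sigma_X^*\overline{\bigl(\widehat{\sigma}^0_{-r-1}\bigr)^{-1}}\Bigr] \circ \delta \circ \Bigl[\sigma_X^*\overline{\widehat{\sigma}^r_{r-1}} \circ \widehat{\sigma}^r_{r-1}\Bigr].
\]
By \eqref{s2a}, each bracketed expression is a scalar multiple of the identity by a sign in $\{\pm 1\}$. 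The potential obstacle is that in the quaternionic case these signs need not individually be $+1$. The resolution is that they come out as $\epsilon^{r+1}$ and $\epsilon^{r-1}$, where $\epsilon \in \{\pm 1\}$ is the sign attached to $\widehat{\sigma}$ on $\mathcal{L}$; their product is $\epsilon^{2r} = 1$, since $\epsilon^2 = 1$, and hence $\mathcal{B}^2(\delta) = \delta$. This parity cancellation is the only delicate point in the proof, and it reflects the fact that the exponents $r-1$ and $-(r+1)$ differ by an even amount, so the construction is insensitive to whether $\mathcal{L}$ is real or quaternionic.
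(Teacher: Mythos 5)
Your construction is exactly the paper's: the paper defines $\mathcal B(\delta) = \bigl(\widehat{\sigma}^0_{-r-1}\bigr)^{-1}\circ\bigl(\sigma^*\overline{\delta}\bigr)\circ\widehat{\sigma}^r_{r-1}$ and simply asserts that it remains an involution even when $\mathcal L$ is quaternionic. Your explicit sign computation $\epsilon^{r+1}\epsilon^{r-1}=\epsilon^{2r}=1$ is correct and supplies the detail the paper leaves implicit, so the proposal is right and follows essentially the same approach.
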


\begin{proof} Given any $
\delta \in H^0\bigl(X, {\rm Diff}^r_X\bigl({\mathcal L}^{r-1},
{\mathcal L}^{-r-1}\bigr)\bigr),
$
the holomorphic differential operator on $\sigma^*_X \overline{\mathcal L}^{r-1}$ defined by
$
\sigma^*\overline{\delta}\colon J^r\bigl(\sigma^*_X \overline{\mathcal L}^{r-1}\bigr)
 \longrightarrow \sigma^*_X \overline{\mathcal L}^{-r-1}
$
sends any locally defined holomorphic section $s$ of ${\mathcal L}^{r-1}$ to
$\sigma^*\overline{\delta (s)}$; recall that the holomorphic sections of
$\sigma^*_X \overline{\mathcal L}^{r-1}$ over an open subset~$U \subset X$ are identified with the
holomorphic sections of ${\mathcal L}^{r-1}$ over $\sigma_X(U)$.
Consider the diagram
\[
\begin{matrix}
J^r\bigl({\mathcal L}^{r-1}\bigr) &\xrightarrow{\widehat{\sigma}^r_{r-1}} &
J^r\bigl(\sigma^*_X \overline{\mathcal L}^{r-1}\bigr)\\
 \Big\downarrow \delta && \Big\downarrow \sigma^*\overline{\delta}\\
{\mathcal L}^{-r-1} &\xrightarrow{\widehat{\sigma}^0_{-r-1}}&
\sigma^*_X \overline{\mathcal L}^{-r-1},
\end{matrix}
\]
where $\widehat{\sigma}^r_{r-1}$ and $\widehat{\sigma}^0_{-r-1}$ are the holomorphic isomorphisms
in \eqref{s2}, which needs not be commutative. This diagram shows that
\[
\bigl(\widehat{\sigma}^0_{-r-1}\bigr)^{-1}\circ \bigl(\sigma^*\overline{\delta}\bigr) \circ\widehat{\sigma}^r_{r-1}
 \in H^0\bigl(X, {\rm Diff}^r_X\bigl({\mathcal L}^{r-1}, {\mathcal L}^{-r-1}\bigr)\bigr).
\]

The map ${\mathcal B}$ from \eqref{f9} can then be defined as
the map that sends any $\delta$ to the differential operator
$\bigl(\widehat{\sigma}^0_{-r-1}\bigr)^{-1}\circ \bigl(\sigma^*\overline{\delta}\bigr)
\circ\widehat{\sigma}^r_{r-1}$ constructed above from it.
Finally, it should be noted that ${\mathcal B}$ is an
involution even if the holomorphic line bundle $\mathcal L$ is quaternionic.
\end{proof}

\begin{Theorem}\label{prop2}
The involution $\mathcal B$ in \eqref{f9} preserves the subset ${\rm OSL}_X(r)$ in Proposition {\rm \ref{f3}}.
The restriction of the map $\mathcal B$ to this subset ${\rm OSL}_X(r)$ coincides with the
involution $\beta$ in \eqref{f7}.
\end{Theorem}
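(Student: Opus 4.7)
The plan is to split the statement into two parts: first, that $\mathcal B$ preserves the oper locus $\mathcal U$ of Proposition~\ref{f3}; second, that under the identification ${\rm OSL}_X(r) \simeq \mathcal U$, the involution $\mathcal B$ agrees with $\beta$. For the first part, I would verify that $\mathcal B$ preserves the two defining conditions on $\delta$, namely that the symbol is the constant function $1$ and that the sub-leading term vanishes. The symbol map \eqref{es} is natural with respect to the isomorphisms $\widehat{\sigma}^m_k$ of \eqref{s2}, because these are all built from the single line-bundle datum $\widehat{\sigma}$ of \eqref{f4} together with the canonical isomorphism \eqref{dj2}, and the induced trivialization $(TX)^{\otimes r}\otimes {\mathcal L}^{-2r} = {\mathcal O}_X$ is matched by the analogous trivialization over $\sigma_X^*\overline{\cdot}$. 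Hence the symbol of $\mathcal B(\delta)$ equals $\sigma_X^*\overline{\operatorname{symb}(\delta)} = \sigma_X^*\overline{1} = 1$. For the sub-leading term, I would invoke the intrinsic reformulation mentioned right after Proposition~\ref{f3}: the condition is that the local system of solutions has trivial $r$-th exterior power. Since $\mathcal B(\delta)$ has, for its local system, the $\sigma_X^*$-conjugate of the one for $\delta$, and triviality of $\bigwedge^r$ is preserved by this operation, the sub-leading term of $\mathcal B(\delta)$ still vanishes.

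For the second part, I would trace through the correspondence ${\rm OSL}_X(r)\to \mathcal U$ of \eqref{z1b} from both sides. Starting from $D \in \mathcal C_X(r)$ corresponding to $\delta \in \mathcal U$, one should check that $\gamma(D)$ corresponds to $\mathcal B(\delta)$. Recall that $\delta$ is recovered by pushing $D$ through the isomorphism $\widetilde{\psi}\colon \operatorname{Sym}^{r-1}(J^1(\mathcal L))\to J^{r-1}(\mathcal L^{\otimes(r-1)})$ in \eqref{psn}, which is built from parallel transport for $D$ followed by the projection $\psi$ to ${\mathcal L}^{\otimes(r-1)}$ followed by $(r{-}1)$-jet formation. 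The key naturality claim I would establish is that the version of $\widetilde{\psi}$ constructed from $\gamma(D)$ is identified with $\sigma_X^*\overline{\widetilde{\psi}}$ via the isomorphisms $\operatorname{Sym}^{r-1}(\widehat{\sigma}^1_1)$ and $\widehat{\sigma}^{r-1}_{r-1}$. This follows from three compatibilities: the projection $\psi$ commutes with the $\widehat{\sigma}$-isomorphisms (both being induced from $\widehat{\sigma}$ itself); jet formation commutes with pullback by $\sigma_X$ via \eqref{dj2}; and parallel transport for $\operatorname{Sym}^{r-1}(\widehat{\sigma}^1_1)^*\sigma_X^*\overline{D}$ corresponds, under these identifications, to the pullback-conjugate of parallel transport for $D$.

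Combining the two compatibilities, the unique differential operator attached to $\gamma(D)$ by the procedure of Proposition~\ref{f3} is exactly $(\widehat{\sigma}^0_{-r-1})^{-1}\circ(\sigma_X^*\overline{\delta})\circ\widehat{\sigma}^r_{r-1}$, which is the definition of $\mathcal B(\delta)$. Since both $\beta$ and $\mathcal B$ are well-defined involutions (the action of $\operatorname{Aut}(\operatorname{Sym}^{r-1}(J^1(\mathcal L)))$ having been quotiented out on the $\beta$-side by Lemma~\ref{lem3}, and the right-hand side of \eqref{z1b} being a genuine function of $D$), this identity descends to ${\rm OSL}_X(r)$ and gives $\mathcal B|_{\mathcal U} = \beta$. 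The main obstacle I anticipate is the careful bookkeeping in the naturality claim for $\widetilde{\psi}$: one has to show that taking the $D$-flat section through a vector, projecting, and taking jets commutes with the operation $\operatorname{Sym}^{r-1}(\widehat{\sigma}^1_1)^{-1}\circ(\sigma_X^*\overline{\cdot})$, and also check that the possible sign $\pm\operatorname{Id}$ in \eqref{s2a} (arising in the quaternionic case) does not obstruct this, which was already noted after \eqref{f6} to be harmless because $-\operatorname{Id}$ preserves any connection.
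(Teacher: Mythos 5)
Your proposal is correct and follows essentially the same route as the paper: the first part is checked directly on the two defining conditions (symbol $1$ and vanishing sub-leading term), and the second part rests on exactly the intertwining relation $\widetilde{\psi}\circ{\rm Sym}^{r-1}\bigl(\widehat{\sigma}^1_1\bigr) = \widehat{\sigma}^{r-1}_{r-1}\circ\widetilde{\psi}$ (the paper's \eqref{a1}) combined with the uniqueness of the differential operator attached to $\bigl(\widetilde{\psi}^{-1}\bigr)^*D$. Your write-up is in fact more detailed than the paper's (which declares the first part ``evident'' and asserts \eqref{a1} without proof), including the helpful observation that the $\pm{\rm Id}$ ambiguity in the quaternionic case is harmless.
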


\begin{proof}
Recall from Proposition \ref{f3} that ${\rm OSL}_X(r)$ is the locus of all
\[
\delta \in H^0\bigl(X, {\rm
Diff}^r_X\bigl({\mathcal L}^{r-1}, {\mathcal L}^{-r-1}\bigr)\bigr)\]
 such that the symbol of $\delta$ is
the constant function $1$ and the sub-leading term of $\delta$ vanishes identically.
Therefore, it is evident that $\mathcal B$ preserves the subset ${\rm OSL}_X(r)$.
Given a holomorphic connection $D$ on
${\rm Sym}^{r-1}\bigl(J^1(\mathcal L)\bigr)$ such that the induced connection on $\det {\rm
Sym}^{r-1}\bigl(J^1(\mathcal L)\bigr)$ is the trivial connection on ${\mathcal O}_X$,
through the map \smash{$\widetilde{\psi}$} in \eqref{psn} we have
\begin{equation}\label{a1}
\widetilde{\psi}\circ{\rm Sym}^{r-1}\bigl(\widehat{\sigma}^1_1\bigr) = \widehat{\sigma}^{r-1}_{r-1}
\circ\widetilde{\psi}
\end{equation}
(see \eqref{f5} and \eqref{s2} for ${\rm Sym}^{r-1}\bigl(\widehat{\sigma}^1_1\bigr)$ and
$\widehat{\sigma}^{r-1}_{r-1}$, respectively). Moreover, from \eqref{a2}
there is a~unique differential operator $\delta$ such that
\begin{itemize}\itemsep=0pt
\item the symbol of $\delta$ is the constant function $1$ on $X$, and

\item the flat vector bundle on $X$ given by the sheaf of solutions of $\delta$ is isomorphic to
$J^{r-1}\!\bigl(\!{\mathcal L}^{\otimes (\!r-1\!)}\bigr)$ equipped with the above connection \smash{$\bigl(\widetilde{\psi}^{-1}\bigr)^*D$}.
\end{itemize}
Since $\delta$ is the image of $D$ under the map in \eqref{z1b}, the second
part of the theorem follows from~\eqref{a1}.
\end{proof}

Theorem \ref{prop2} has the following immediate consequence, which characterizes the real slice of ${\rm SL}(r,\mathbb{C})$-opers defined above.

\begin{Proposition}\label{corpr2}
An element of the moduli space of ${\rm SL}(r,\mathbb{C})$-opers
\[
\delta \in {\rm OSL}_X(r) \subset H^0\bigl(X,
{\rm Diff}^r_X\bigl({\mathcal L}^{\otimes (r-1)}, {\mathcal L}^{-r-1}\bigr)\bigr)
\]
is in the real slice of ${\rm SL}(r,{\mathbb C})$-opers if and only if
${\mathcal B}(\delta) = \delta$, where $\mathcal B$ is the involution defined in~\eqref{f9}.
\end{Proposition}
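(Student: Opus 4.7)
The plan is to simply combine Definition \ref{de1} with Theorem \ref{prop2}; there is essentially no remaining content to prove, since the hard identification of the two involutions has already been carried out in Theorem \ref{prop2}.

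First I would recall the identification of ${\rm OSL}_X(r)$ with the subset $\mathcal U$ of Proposition \ref{f3} consisting of those $\delta \in H^0\bigl(X, {\rm Diff}^r_X\bigl({\mathcal L}^{\otimes (r-1)}, \bigl({\mathcal L}^{\otimes (r+1)}\bigr)^*\bigr)\bigr)$ whose symbol is identically $1$ and whose sub-leading term vanishes. Under this identification, the two candidate involutions on ${\rm OSL}_X(r)$ are: the involution $\beta$ of Lemma \ref{lem3}, obtained by descending $\gamma$ from ${\mathcal C}_X(r)$; and the restriction of $\mathcal B$ from Lemma \ref{bigb}, which a priori only acts on the ambient space of differential operators.

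Next I would invoke Theorem \ref{prop2}: the first half tells us that $\mathcal B$ preserves the subset ${\rm OSL}_X(r)$, so it makes sense to restrict it there; the second half tells us that this restriction agrees with $\beta$. Hence for any $\delta \in {\rm OSL}_X(r)$ one has ${\mathcal B}(\delta) = \beta(\delta)$.

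By Definition \ref{de1}, $\delta$ lies in the real slice of ${\rm SL}(r,\mathbb C)$-opers if and only if $\beta(\delta) = \delta$. Combining this with the equality ${\mathcal B}(\delta) = \beta(\delta)$ just noted gives the equivalence claimed in the proposition. There is no serious obstacle: the only step worth flagging is the implicit use of the identification from Proposition \ref{f3} so that the equation ${\mathcal B}(\delta) = \delta$ is being read in the space of differential operators, where $\mathcal B$ is defined, rather than in ${\mathcal C}_X(r)$ where $\gamma$ lives.
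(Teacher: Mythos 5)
Your proposal is correct and matches the paper exactly: the paper presents Proposition \ref{corpr2} as an immediate consequence of Theorem \ref{prop2} combined with Definition \ref{de1}, which is precisely your argument. Nothing further is needed.
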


\section{Another description of the involution}

In order to study the real slice of ${\rm SL}(r,\mathbb{C})$-opers defined through Proposition \ref{corpr2} in the previous section, we shall dedicate this section to describing the involution from a different perspective.

\subsection{Anti-holomorphic involutions on differential operators} Consider as in previous sections (see \eqref{z1}) the projections
\[
p_i\colon\ X\times X \longrightarrow X.
\]
Then, given two holomorphic vector bundles $A$ and $B$ on $X$, we shall denote the holomorphic vector bundle $(p^*_1 A)\otimes (p^*_2 B)$ on
$X\times X$ by $A\boxtimes B$.
These holomorphic vector bundles produce the natural vector bundles
\begin{eqnarray}\label{abr}
\mathcal{J} := A\boxtimes (B^*\otimes K_X)\qquad \text{ and }\qquad
\mathcal{J}_d := A\boxtimes (B^*\otimes K_X)\otimes {\mathcal O}_{X\times X}((d+1){\Delta})\end{eqnarray} on $X\times X$,
where $\Delta$ is the diagonal divisor defined in \eqref{z2}. These vector bundles fit in a short exact sequence of coherent
sheaves on $X\times X$ given by
\begin{gather}
0 \longrightarrow \mathcal{J} \longrightarrow \mathcal{J}_d \longrightarrow {\mathcal Q}_d(A, B)
 := \frac{\mathcal{J}_d}{\mathcal{J}} \longrightarrow 0,
\label{k1}
\end{gather}
where the support of ${\mathcal Q}_d(A, B)$ in \eqref{k1}
is the non-reduced divisor $(d+1){\Delta}$. The direct image
\begin{equation}\label{k2}
{\mathcal K}_d(A, B) := p_{1*} {\mathcal Q}_d(A, B)
\end{equation}
is a holomorphic vector bundle on $X$, and from \cite[Section 2.1]{BS}, \cite[p.~25, equation~(5.1)]{Bi},
\cite[Section 3.1, p.~1314]{BB}) one has that
\begin{equation}\label{k3}
{\mathcal K}_d(A, B) = \operatorname{Hom}\bigl(J^d(B), A\bigr) = \operatorname{Diff}^d_X(B, A).
\end{equation}

For $d \geq 1$, the sheaf ${\mathcal Q}_d(A, B)$ in \eqref{k1} fits in the following short
exact sequence of sheaves on $X\times X$, where the abbreviations of \eqref{abr} are used,
\begin{gather}\label{ez1}
0 \longrightarrow {\mathcal Q}_{d-1}(A, B) \longrightarrow {\mathcal Q}_{d}(A, B) \longrightarrow
 \frac{\mathcal{J}_d}{\mathcal{J}_{d-1}} \longrightarrow 0.
\end{gather}
Note that the above sheaf $\mathcal{J}_d/\mathcal{J}_{d-1}$ is supported on the reduced divisor ${\Delta}$.
Taking direct image of the short exact sequence in \eqref{ez1} by the projection~$p_1$, we get the following
short exact sequence of holomorphic vector bundles on $X$
\begin{gather}
0 \longrightarrow p_{1*} {\mathcal Q}_{d-1}(A, B) \longrightarrow p_{1*} {\mathcal Q}_d(A, B)
 = {\mathcal K}_d(A, B)
\stackrel{p_0}{\longrightarrow} p_{1*} \left( \frac{\mathcal{J}_d}{\mathcal{J}_{d-1}}\right) \longrightarrow 0.\label{k4}
\end{gather}

From Poincar\'e adjunction formula, one has that ${\mathcal O}_{X\times X}({\Delta})\big\vert_{\Delta}$
is the normal bundle of ${\Delta}$ \cite[p.~146]{GH}, and therefore ${\mathcal O}_{X\times X}({\Delta})\big\vert_{\Delta} = TX$, using the identification of
${\Delta}$ with $X$ defined by~$x \longmapsto (x, x)$. Hence, we have
\[
p_{1*}\left( \frac{\mathcal{J}_d}{\mathcal{J}_{d-1}}\right) = \operatorname{Hom}(B, A)\otimes (TX)^{\otimes d}.
\]

\begin{Remark} The isomorphism in \eqref{k3} and the projection $p_0$ in \eqref{k4} together produce a~homomorphism
\begin{equation}\label{k5}
\operatorname{Diff}^d_X(B, A) \longrightarrow \operatorname{Hom}(B, A)\otimes (TX)^{\otimes d},
\end{equation}
which is the symbol map on differential operators constructed in \eqref{es}.\end{Remark}

Recall from Section \ref{invo} that $\mathcal L := \bigl(K^{1/2}_X\bigr)^*$, and consider now an element of the moduli space of
${\rm SL}(r, {\mathbb C})$-opers on $X$ given by
\begin{equation}\label{d1}
\delta \in {\rm OSL}_X(r) \subset
H^0\bigl(X, {\rm Diff}^r_X\bigl({\mathcal L}^{\otimes (r-1)}, {\mathcal L}^{-r-1}\bigr)\bigr).
\end{equation}
From \eqref{k2} and \eqref{k3}, we know that $\delta$ corresponds to a section
\begin{equation}\label{sd}
\widetilde{S}_\delta \in H^0((r+1)\Delta, \bigl({\mathcal L}^{-r-1}\boxtimes {\mathcal L}^{-r-1}\bigr)
{\mathcal O}_{X\times X}((r+1)\Delta)).
\end{equation}
Note that the restriction of
\[
\bigl({\mathcal L}^{-r-1}\boxtimes {\mathcal L}^{-r-1}\bigr){\mathcal O}_{X\times X}((r+1)\Delta)
 \longrightarrow X\times X
\]
to $\Delta \subset X\times X$ is ${\mathcal O}_\Delta$, and that $\mathcal{J}_r =
\bigl({\mathcal L}^{-r-1}\boxtimes {\mathcal L}^{-r-1}\bigr){\mathcal O}_{X\times X}((r+1)\Delta)$ for the vector
bundles in \eqref{abr} defined as
\[A = {\mathcal L}^{-r-1}\qquad \text{ and }\qquad B = {\mathcal L}^{r-1}.\]

\begin{Lemma}
There is a natural section
\[
S_\delta \in H^0(3\Delta, \mathcal{J}_0) = H^0\bigl(3\Delta,
\bigl({\mathcal L}^{-1}\boxtimes {\mathcal L}^{-1}\bigr){\mathcal O}_{X\times X}(\Delta)\bigr)
\]
such that
\begin{itemize}\itemsep=0pt
\item $\widetilde{S}_\delta\big\vert_{3\Delta} = (S_\delta)^{\otimes (r+1)}$,

\item the restriction of $S_\delta$ to $\Delta \subset X\times X$ coincides with the section of
${\mathcal O}_\Delta$ given by the constant function $1$, and

\item $S_\delta\big\vert_{2\Delta}$ is anti-invariant under the involution of $2\Delta$ obtained by restricting
the involution of $X\times X$ defined by $(x_1, x_2) \longmapsto (x_2, x_1)$, in other words, this
involution takes $S_\delta\big\vert_{2\Delta}$ to~$-S_\delta\big\vert_{2\Delta}$.
\end{itemize}
\end{Lemma}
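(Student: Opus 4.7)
The plan is to define $S_\delta$ as the unique formal $(r+1)$-th root of $\widetilde{S}_\delta|_{3\Delta}$ normalized by $S_\delta|_\Delta = 1$, and then to show that the anti-invariance of $S_\delta|_{2\Delta}$ under the swap involution is forced by the vanishing of the sub-leading term of $\delta$, i.e., by the ${\rm SL}(r,{\mathbb C})$-oper condition recorded in Proposition \ref{f3}. The key observation is that
\[
\bigl(\bigl({\mathcal L}^{-1}\boxtimes {\mathcal L}^{-1}\bigr)\otimes {\mathcal O}_{X\times X}(\Delta)\bigr)^{\otimes (r+1)} = \bigl({\mathcal L}^{-r-1}\boxtimes {\mathcal L}^{-r-1}\bigr)\otimes {\mathcal O}_{X\times X}((r+1)\Delta),
\]
which is the ambient line bundle of $\widetilde{S}_\delta$, and that the restriction of $\bigl({\mathcal L}^{-1}\boxtimes {\mathcal L}^{-1}\bigr)\otimes {\mathcal O}(\Delta)$ to $\Delta$ is canonically trivial via ${\mathcal L}^{-2}\otimes TX = K_X\otimes TX = {\mathcal O}_X$.

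For the construction I would work locally around a point of $\Delta$: fix a transverse coordinate $u = z_1 - z_2$ and a local trivialization of $\mathcal L$, producing a local generator $e$ of $\bigl({\mathcal L}^{-1}\boxtimes {\mathcal L}^{-1}\bigr)\otimes {\mathcal O}(\Delta)$ on which the swap involution acts by $e \mapsto -e$, since ${\mathcal L}^{-1}\boxtimes {\mathcal L}^{-1}$ is symmetric under the exchange of factors while $1/u \mapsto -1/u$. The symbol condition of an oper gives $\widetilde{S}_\delta|_{3\Delta} = (1 + a_1 u + a_2 u^2) e^{r+1}$ modulo $u^3$, and the standard formal binomial expansion yields a unique $(r+1)$-th root $S_\delta = (1 + b_1 u + b_2 u^2) e$ with $b_1 = a_1/(r+1)$. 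Uniqueness of the normalized root ensures that these local expressions glue to a global section on $3\Delta$; the first two properties of the lemma are then immediate.

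For the anti-invariance I would compute that a local section $(c_0 + c_1 u) e$ on $2\Delta$ is sent by the swap involution to $(-c_0 + c_1 u) e$, so the anti-invariance condition is equivalent to $c_1 = 0$. Applied to $S_\delta$ this amounts to $b_1 = 0$, i.e., $a_1 = 0$.

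The main obstacle, which I would tackle last, is to identify the vanishing $a_1 = 0$ with the vanishing of the sub-leading term of the differential operator $\delta$. Using the identification $\operatorname{Diff}^r_X({\mathcal L}^{r-1}, {\mathcal L}^{-r-1}) = p_{1*}{\mathcal Q}_r({\mathcal L}^{-r-1}, {\mathcal L}^{r-1})$ from \eqref{k3}, I would locally write $\delta = \partial^r + \alpha_1\partial^{r-1} + \cdots$ and compare the Taylor expansion of $\widetilde{S}_\delta$ along $\Delta$ with the coefficients of $\delta$; a direct computation shows that $a_1$ is a nonzero multiple of $\alpha_1$, so the vanishing of the sub-leading term $\alpha_1$, which is the ${\rm SL}(r,{\mathbb C})$-oper condition from Proposition \ref{f3}, delivers the anti-invariance of $S_\delta|_{2\Delta}$.
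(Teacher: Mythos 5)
Your proposal is correct and follows essentially the same route as the paper's (very terse) proof: the symbol condition gives $S_\delta|_\Delta = 1$, the vanishing of the sub-leading term gives the anti-invariance on $2\Delta$, and the normalized $(r+1)$-th root exists and is unique on the nilpotent thickening $3\Delta$ by the truncated binomial expansion. You simply supply the local-coordinate details that the paper leaves implicit.
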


\begin{proof} This follows from the above analysis, by noting that the given condition that the symbol of
$\delta$ is the constant function $1$ is equivalent to the statement that the restriction of the section~$\widetilde{S}_\delta$ (constructed in \eqref{sd}) to $\Delta \subset X\times X$ is the section of
${\mathcal O}_\Delta$ given by the constant function $1$ (see~\eqref{k5}). The given condition that
the sub-leading term of $\delta$ vanishes identically is equivalent to the condition that the section
$\widetilde{S}_\delta\big\vert_{2\Delta}$ is anti-invariant under the involution of $2\Delta$ obtained by restricting
the involution of $X\times X$ defined by $(x_1, x_2) \longmapsto (x_2, x_1)$.
\end{proof}

We will show that the section of $\mathcal{J}_r$ for $r = 2$ given by
\begin{equation}\label{s1}
(S_\delta)^{\otimes 3} \in H^0(3\Delta, \mathcal{J}_2)
\end{equation}
naturally defines a projective structure on the Riemann surface $X$.

\begin{Proposition}
The section $(S_\delta)^{\otimes 3}$ in \eqref{s1} naturally defines a projective structure on
the Riemann surface $X$.
\end{Proposition}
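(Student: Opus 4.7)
The plan is to observe that for $r=2$ the section $(S_\delta)^{\otimes 3}$ carries exactly the same data as the ${\rm SL}(2,\mathbb{C})$-oper $\delta$ itself, and then to exhibit the projective structure through the developing-map construction associated to the second-order linear ODE defined by $\delta$.

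First I would note that when $r=2$ we have $(r+1)\Delta=3\Delta$, so the section $\widetilde{S}_\delta$ from \eqref{sd} already lives on $3\Delta$, and the identity $\widetilde{S}_\delta\big|_{3\Delta}=(S_\delta)^{\otimes(r+1)}$ from the preceding lemma becomes simply $\widetilde{S}_\delta=(S_\delta)^{\otimes 3}$. Via the identifications \eqref{k2}--\eqref{k3}, $\widetilde{S}_\delta$ is precisely the data of the differential operator $\delta$, while the two further conditions on $S_\delta$ recorded in the lemma (that it restricts to the constant function $1$ on $\Delta$ and that $S_\delta|_{2\Delta}$ is anti-invariant under the swap involution of $X\times X$) are exactly the symbol and sub-leading-term conditions that, by Proposition \ref{f3}, cut out ${\rm OSL}_X(2)$ inside the space of all order-$2$ differential operators. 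Hence giving $(S_\delta)^{\otimes 3}$ is equivalent to giving an ${\rm SL}(2,{\mathbb C})$-oper on $X$.

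To extract the projective structure, I would work locally. Around each point of $X$, choose a coordinate $z$ on an open set $U$ together with trivializations of ${\mathcal L}$ and ${\mathcal L}^{-3}$ under which $\delta$ takes the normal form $\frac{{\rm d}^2}{{\rm d}z^2}+q(z)$ for some holomorphic function $q$. On a simply connected $U$ the space of holomorphic solutions of $\delta f = 0$ is two-dimensional, and any basis $\{f_1,f_2\}$ has non-vanishing Wronskian because the symbol of $\delta$ equals $1$. The developing-type map
\[
\varphi_U\colon\ U\longrightarrow {\mathbb C}{\mathbb P}^1,\qquad z\longmapsto [f_1(z):f_2(z)],
\]
is then a holomorphic local embedding, and I would cover $X$ by such charts to assemble a coordinate atlas. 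The step requiring real verification is the compatibility condition in the definition of a projective structure, namely that on each connected component of a chart overlap the transition map $\varphi_{U_i}\circ\varphi_{U_j}^{-1}$ is the restriction of an element of ${\rm PGL}(2,{\mathbb C})$. This is, however, immediate, because two bases of the same two-dimensional solution space differ by an element of ${\rm GL}(2,{\mathbb C})$, which descends to ${\rm PGL}(2,{\mathbb C})$ on the projectivization. The resulting equivalence class of atlases is the projective structure canonically determined by $(S_\delta)^{\otimes 3}$, reproducing the identification of ${\rm OSL}_X(2)$ with the space of projective structures on $X$ mentioned in the introduction.
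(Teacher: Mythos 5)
There is a genuine gap: you have proved the proposition only in the case $r=2$, where it is essentially a tautology, and missed the actual content of the statement. In the paper's setup the operator $\delta$ in \eqref{d1} is an element of ${\rm OSL}_X(r)$ for \emph{arbitrary} $r\geq 2$; the phrase ``the section of $\mathcal{J}_r$ for $r=2$'' preceding \eqref{s1} refers to the subscript $d$ of the sheaf $\mathcal{J}_d$ in \eqref{abr} (with $A={\mathcal L}^{-3}$, $B={\mathcal L}$), not to the rank of the oper. Accordingly, $\widetilde{S}_\delta$ lives on $(r+1)\Delta$, its restriction to $3\Delta$ admits the $(r+1)$-th root $S_\delta$, and the point of the proposition is that the \emph{cube} $(S_\delta)^{\otimes 3}$, viewed via \eqref{k2}--\eqref{k3} as a \emph{second-order} operator $\delta_0\in H^0\bigl(X,{\rm Diff}^2_X\bigl({\mathcal L},{\mathcal L}^{-3}\bigr)\bigr)$, lies in ${\rm OSL}_X(2)$ even though $\delta$ itself has order $r$. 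Your identification ``$(S_\delta)^{\otimes 3}=\widetilde{S}_\delta=$ the data of $\delta$'' is valid only when $r=2$; for $r>2$ the section $(S_\delta)^{\otimes 3}$ carries strictly less information than $\delta$ (only its third-order germ along the diagonal, repackaged), and it is precisely this reduction that produces the first component of the map $\Psi$ in \eqref{d2}. Your argument never constructs $\delta_0$ from $\delta$ for general $r$, so the proposition is not established.

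The salvageable part is real: your observation that the two normalization conditions on $S_\delta$ (restriction to $\Delta$ equal to $1$, anti-invariance of $S_\delta\big\vert_{2\Delta}$ under the swap) translate into ``symbol $=1$'' and ``vanishing sub-leading term'' is exactly the paper's verification, and it applies verbatim to $\delta_0$ once $\delta_0$ has been defined as the operator corresponding to $(S_\delta)^{\otimes 3}$ under \eqref{k3} for $d=2$. Likewise your developing-map construction correctly spells out the identification of ${\rm OSL}_X(2)$ with projective structures, which the paper simply cites. To repair the proof, replace your first paragraph by: for any $r$, the section $(S_\delta)^{\otimes 3}\in H^0(3\Delta,\mathcal{J}_2)$ determines, via \eqref{k1}--\eqref{k3} applied with $d=2$, $A={\mathcal L}^{-3}$, $B={\mathcal L}$, a differential operator $\delta_0\in H^0\bigl(X,{\rm Diff}^2_X\bigl({\mathcal L},{\mathcal L}^{-3}\bigr)\bigr)$; then run your symbol and sub-leading-term checks on $\delta_0$ (noting that the restriction of $(S_\delta)^{\otimes 3}$ to $\Delta$ is $1^3=1$ and its restriction to $2\Delta$ inherits the anti-invariance from $S_\delta\big\vert_{2\Delta}$), and conclude with Proposition \ref{f3} and the developing map as you did.
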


\begin{proof}From \eqref{k2} and \eqref{k3}, we know that $(S_\delta)^{\otimes 3}$ in \eqref{s1} gives a differential operator
\begin{equation}\label{s3}
\delta_0 \in H^0\bigl(X, {\rm Diff}^2_X\bigl({\mathcal L}, {\mathcal L}^{-3}\bigr)\bigr).
\end{equation}
Since the restriction of $S_\delta$ to $\Delta \subset X\times X$ coincides with the section of
${\mathcal O}_\Delta$ given by the constant function $1$, we conclude that the symbol of the differential
operator $\delta_0$ is the constant function~$1$ (see \eqref{k5}). Since
$S_\delta\big\vert_{2\Delta}$ is anti-invariant under the involution of $2\Delta$ obtained by restricting
the involution of $X\times X$ defined by $(x_1, x_2) \longmapsto (x_2, x_1)$, we conclude that
the sub-leading term of the differential operator $\delta_0$ vanishes identically.
Therefore, we have
\[
\delta_0 \in {\rm OSL}_X(2)
\]
(see Proposition \ref{f3}). Since the space ${\rm OSL}_X(2)$ is identified with the space of projective structures on $X$,
the differential operator $\delta_0$ in \eqref{s3} defines a projective structure on
the Riemann surface $X$.
\end{proof}

Consider the flat connection $D_0$ given by the sheaf of solutions of the differential operator $\delta_0$ in~\eqref{s3}.
The underlying holomorphic vector bundle for it is $J^1(\mathcal L)$.
The holomorphic connection on $\text{Sym}^{r-1}\bigl(J^1(\mathcal L)\bigr)$ induced by $D_0$ will be denoted by
$\text{Sym}^{r-1}(D_0)$, and from \eqref{psn} it produces a holomorphic isomorphism
\[\text{Sym}^{r-1}\bigl(J^1(\mathcal L)\bigr) \longrightarrow J^{r-1}\bigl({\mathcal L}^{r-1}\bigr).
\]
The holomorphic connection on $J^{r-1}\bigl({\mathcal L}^{r-1}\bigr)$
given by $\text{Sym}^{r-1}(D_0)$ using this isomorphism will be denoted by $\text{Sym}^{r-1}(D_0)'$.

For $D$ denote the holomorphic connection on $J^{r-1}\bigl({\mathcal L}^{r-1}\bigr)$ given by the sheaf of solutions of
the differential operator $\delta$ in \eqref{d1}, we shall define
\[
\theta_\delta := D- \text{Sym}^{r-1}(D_0)' \in H^0\bigl(X, \text{End}\bigl(J^{r-1}\bigl({\mathcal L}^{r-1}\bigr)\bigr)
\otimes K_X\bigr).
\]
Taking the trace for all $1 \leq i \leq r$ given by
\begin{equation}\label{d3}
\text{trace}\bigl((\theta_\delta)^i\bigr) \in H^0\bigl(X, K^{\otimes i}_X\bigr),
\end{equation}
note that $\text{trace}(\theta_\delta) = 0 = \text{trace}\bigl((\theta_\delta)^2\bigr)$.
\subsection{Real slices of fibrations}
In search of an interesting parametrization of real slices of ${\rm SL}(r,\mathbb{C})$-opers similar to those one can define for Higgs bundles though Hitchin's integrable system \cite{Hi}, consider the map
\begin{equation}\label{d2}
\Psi\colon\ {\rm OSL}_X(r) \longrightarrow {\rm OSL}_X(2)\times \left(\bigoplus_{i=3}^r H^0\bigl(X, K^{\otimes i}_X\bigr)\right)
\end{equation}
 that sends any differential operator $\delta$ as defined in \eqref{f3} to
$\bigl(\delta_0, \bigoplus_{i=3}^r \text{trace}\bigl((\theta_\delta)^i\bigr)\bigr)$, where ${\delta_0 \in {\rm OSL}_X(2)}$
is constructed in \eqref{s3} and the traces $\text{trace}\bigl((\theta_\delta)^i\bigr)$ are the sections in \eqref{d3}.

Consider an anti-holomorphic involution $\sigma_X$ of $X$ as in Section \ref{realoper}, and the induced involution~$F$
from Lemma \ref{lemmae8}. From equation \eqref{ec}, it follows that $F$ restricts to an anti-holomorphic involution
\[
F_X\colon\ {\rm OSL}_X(2) \longrightarrow {\rm OSL}_X(2).
\]
From Theorem \ref{prop2}, in the case when $r = 2$ this $F_X$ coincides with $\mathcal B$
constructed in \eqref{f9}, and~$\beta$ constructed in \eqref{f7}. On the other hand, the differential
${\rm d}\sigma\colon T^{\mathbb R}X \longrightarrow \sigma^*T^{\mathbb R}X$ produces a~holomorphic involution $TX \longrightarrow \sigma^*\overline{TX}$. Through these maps, one can define an anti-holomorphic
linear automorphism of $\omega \longmapsto \sigma^*_X\overline{\omega}$ of $H^0\bigl(X, K^{\otimes i}_X\bigr)$.
Then there is a natural involution
\begin{align}
{\mathcal A}_1\colon\ {\rm OSL}_X(2)\times \left(\bigoplus_{i=3}^r H^0\bigl(X, K^{\otimes i}_X\bigr)\right)&
\longrightarrow {\rm OSL}_X(2)\times \left(\bigoplus_{i=3}^r H^0\bigl(X, K^{\otimes i}_X\bigr)\right),\nonumber\\
\left(\eta, \bigoplus_{i=3}^r \omega_i\right)&\longmapsto \left(F_X(\eta),
\bigoplus_{i=3}^r \sigma^*_X\overline{\omega}_i\right). \label{d4}
\end{align}
Through the above involution, Theorem \ref{prop2} leads to the following statement.

\begin{Proposition}\label{lem4}
Let ${\mathcal A} = \Psi^{-1}\circ{\mathcal A}_1\circ\Psi$ be the involution of
${\rm OSL}_X(r)$, where ${\mathcal A}_1$ and $\Psi$
are constructed in \eqref{d4} and \eqref{d2} respectively. Then ${\mathcal A}$ coincides
with the involution $\beta$ in \eqref{f7}. Also ${\mathcal A}$ coincides
with the restriction of $\mathcal B$ $($in \eqref{f9}$)$ to ${\rm OSL}_X(r)$.
\end{Proposition}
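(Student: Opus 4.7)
The plan is to leverage Theorem~\ref{prop2}, which already tells us that the restriction of $\mathcal{B}$ to ${\rm OSL}_X(r)$ coincides with $\beta$. Consequently, it suffices to show that $\mathcal{A}$ agrees with $\mathcal{B}\vert_{{\rm OSL}_X(r)}$, equivalently that $\Psi\circ\mathcal{B}=\mathcal{A}_1\circ\Psi$ on ${\rm OSL}_X(r)$. Since $\Psi$ records the SL$(2,\mathbb{C})$-oper $\delta_0$ in the first slot and the holomorphic differentials $\mathrm{tr}((\theta_\delta)^i)$ for $3\le i\le r$ in the remaining slots, this reduces to two coordinate-wise compatibilities.

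The first step is to reinterpret $\mathcal{B}$ geometrically on the $(r+1)\Delta$-supported section $\widetilde{S}_\delta$ attached to $\delta$ via \eqref{k3}. Unwinding the definition in Lemma~\ref{bigb}, one sees that $\mathcal{B}$ amounts to the operation $\widetilde{S}_\delta\longmapsto(\sigma_X\times\sigma_X)^*\overline{\widetilde{S}_\delta}$, transported by the appropriate tensor powers of the isomorphism $\widehat{\sigma}$ of \eqref{f4}. Because the section $S_\delta$ is uniquely characterized by $S_\delta^{\otimes(r+1)}=\widetilde{S}_\delta\vert_{3\Delta}$, the normalization $S_\delta\vert_\Delta=1$, and the anti-invariance on $2\Delta$, and because all three conditions are preserved by anti-holomorphic pullback, one deduces $S_{\mathcal{B}(\delta)}=(\sigma_X\times\sigma_X)^*\overline{S_\delta}$.

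From this identity the first coordinate is immediate: applying it with $r=2$ to the SL$(2,\mathbb{C})$-oper $\delta_0$ constructed from $(S_\delta)^{\otimes 3}\vert_{3\Delta}$ yields $(\mathcal{B}(\delta))_0=\mathcal{B}(\delta_0)$, and Theorem~\ref{prop2} in the case $r=2$ identifies this further with $F_X(\delta_0)$, matching the first component of $\mathcal{A}_1\circ\Psi$. For the trace coordinates, I would observe that $\mathcal{B}$ transports the holomorphic connection $D$ on $J^{r-1}(\mathcal{L}^{r-1})$ attached to $\delta$, as well as the connection $\mathrm{Sym}^{r-1}(D_0)'$ attached to $\delta_0$, to their $\sigma_X$-conjugates via $\widehat{\sigma}^{r-1}_{r-1}$. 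Hence the endomorphism-valued holomorphic $1$-form $\theta_{\mathcal{B}(\delta)}=D_{\mathcal{B}(\delta)}-\mathrm{Sym}^{r-1}(D_{F_X(\delta_0)})'$ identifies with $\sigma_X^*\overline{\theta_\delta}$, and since taking $i$-th powers and the trace is natural, we conclude $\mathrm{tr}((\theta_{\mathcal{B}(\delta)})^i)=\sigma_X^*\overline{\mathrm{tr}((\theta_\delta)^i)}$, as required.

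The main obstacle I anticipate is the careful bookkeeping of the isomorphisms $\widehat{\sigma}^m_k$ and their compatibility with the identification \eqref{k3} in the first step, i.e., checking that the formula $S_{\mathcal{B}(\delta)}=(\sigma_X\times\sigma_X)^*\overline{S_\delta}$ really holds on the nose rather than merely up to a scalar. In particular one must track the $\pm 1$ sign coming from \eqref{s2a} in the quaternionic case; however, as already exploited in the proof of Lemma~\ref{lem3}, this sign acts trivially on connections and on the normalized section $S_\delta$ (whose restriction to $\Delta$ is fixed to be $1$), so the identity descends unambiguously to ${\rm OSL}_X(r)$, independently of whether $\mathcal{L}$ is real or quaternionic.
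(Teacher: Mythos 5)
Your argument is correct and follows the route the paper intends: Proposition~\ref{lem4} is stated there as an immediate consequence of Theorem~\ref{prop2} with no written proof, and your reduction to the intertwining identity $\Psi\circ\mathcal{B}=\mathcal{A}_1\circ\Psi$, verified coordinate-wise via $S_{\mathcal{B}(\delta)}=(\sigma_X\times\sigma_X)^*\overline{S_\delta}$ and the naturality of the traces of $\theta_\delta$, supplies exactly the verification the paper leaves implicit. Your handling of the sign from \eqref{s2a} in the quaternionic case is likewise consistent with how the paper disposes of it in the proof of Lemma~\ref{lem3}.
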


Finally, Proposition \ref{lem4} has the following immediate consequence.

\begin{Corollary}\label{corpr3}
An element
\[
\left(\eta, \bigoplus_{i=3}^r \omega_i\right) \in
{\rm OSL}_X(2)\times \left(\bigoplus_{i=3}^r H^0\bigl(X, K^{\otimes i}_X\bigr)\right)
\]
is within the real slice of ${\rm SL}(r,{\mathbb C})$-opers if and only if
$F_X(\eta) = \eta$ and $\omega_i = \sigma^*_X\overline{\omega}_i$ for all $3 \leq i \leq r$.
\end{Corollary}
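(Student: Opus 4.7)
The plan is to derive the corollary as a direct consequence of Proposition \ref{lem4} by unpacking its content at the level of fixed points. By Definition \ref{de1}, an element $\delta \in {\rm OSL}_X(r)$ lies in the real slice of ${\rm SL}(r,{\mathbb C})$-opers precisely when $\beta(\delta) = \delta$. Proposition \ref{lem4} identifies $\beta$ with $\mathcal{A} = \Psi^{-1}\circ\mathcal{A}_1\circ\Psi$, and since $\Psi$ is a bijection, this immediately yields the equivalence
\[
\beta(\delta) = \delta \quad \Longleftrightarrow \quad \mathcal{A}_1(\Psi(\delta)) = \Psi(\delta).
\]
Thus the problem reduces to characterizing the fixed locus of $\mathcal{A}_1$ on the product ${\rm OSL}_X(2)\times \bigl(\bigoplus_{i=3}^r H^0\bigl(X, K^{\otimes i}_X\bigr)\bigr)$.

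The second step is simply to appeal to the explicit formula for $\mathcal{A}_1$ from \eqref{d4}. Since $\mathcal{A}_1$ is defined as the diagonal product of the involution $F_X$ on the ${\rm OSL}_X(2)$ factor with the conjugate-linear involutions $\omega \longmapsto \sigma^*_X\overline{\omega}$ on each summand $H^0\bigl(X, K^{\otimes i}_X\bigr)$, a point $\bigl(\eta, \bigoplus_{i=3}^r \omega_i\bigr)$ is fixed by $\mathcal{A}_1$ if and only if each factor is fixed separately. This gives exactly the two stated conditions: $F_X(\eta) = \eta$ and $\omega_i = \sigma^*_X\overline{\omega}_i$ for every $3 \leq i \leq r$.

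There is essentially no obstacle to overcome in the corollary itself; all the substantive content was absorbed into Proposition \ref{lem4}, which in turn rests on Theorem \ref{prop2} identifying $\mathcal B$ with $\beta$ on ${\rm OSL}_X(r)$ and on the verification that $\Psi$ intertwines the two involutions. Once those are in hand, the corollary is merely the translation of the fixed-point condition under the explicit parametrization of opers by a projective structure together with higher differentials, which is the whole point of introducing $\Psi$.
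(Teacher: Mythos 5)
Your argument is exactly the one the paper intends: the corollary is stated as an immediate consequence of Proposition \ref{lem4}, and your unpacking (real slice $=$ fixed locus of $\beta$, $\beta = \Psi^{-1}\circ\mathcal{A}_1\circ\Psi$, hence transport to the fixed locus of the diagonal involution $\mathcal{A}_1$ from \eqref{d4}) is the same reasoning the authors leave implicit. The proposal is correct and matches the paper's approach.
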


\section{Nondegenerate immersions into a projective space}

Let $X$ be a connected Riemann surface and consider a holomorphic immersion
\[
\phi\colon\ X \longrightarrow {\mathbb C}{\mathbb P}^d.
\]
Given a point $x \in X$ and a hyperplane
$H \subset {\mathbb C}{\mathbb P}^d$ such that $\phi(x) \in H$,
we say that the order of contact of $X$ with $H$ at $x$ is at least $n$
if the following condition holds: Every holomorphic function $h$ defined on some
neighborhood $U$ of $\phi(x) \in {\mathbb C}{\mathbb P}^d$ such that
$h\big\vert_{U\cap H} = 0$, the order of vanishing of $h\circ\phi$ at
$x$ is at least $n$.

\begin{Definition}The {\it order of contact} of $X$ with $H$ at $x$ is said to
be $n$ if
\begin{itemize}\itemsep=0pt
\item the order of contact of $X$ with $H$ at $x$ is at least $n$, and

\item the order of contact of $X$ with $H$ at $x$ is not at least $n+1$.
\end{itemize}
\end{Definition}
In other words, the order of contact of $X$ with $H$ at $x$ is $n$ if
\begin{itemize}\itemsep=0pt
\item the order of contact of $X$ with $H$ at $x$ is at least $n$, and

\item there is a holomorphic function $h$ defined on some
neighborhood $U$ of $\phi(x) \in {\mathbb C}{\mathbb P}^d$ such that
$h\big\vert_{U\cap H} = 0$ and the order of vanishing of $h\circ\phi$ at
$x$ is exactly $n$.
\end{itemize}

For any point $x \in X$, there is a hyperplane $H^x \subset {\mathbb C}{\mathbb
P}^d$ such that the order of contact of $X$ with $H^x$ at $x$ is at least $d-1$.

\begin{Definition}We say that the map $\phi$ is \textit{nondegenerate at $x$} if the order of contact of $X$,
at $x$, with every hyperplane $H \subset {\mathbb C}{\mathbb P}^d$ containing $\phi(x)$
is at most $d-1$. Equivalently, $\phi$ is nondegenerate at $x$ if there is
no hyperplane $H \subset {\mathbb C}{\mathbb P}^d$ such that the
order of contact of $X$ with~$H$ at $x$ is at least $d$.
The map $\phi$ is said to be \textit{nondegenerate} if it is nondegenerate at every~$x \in X$.
\end{Definition}

Consider now $X$ to be a {\it compact} connected Riemann surface and fix a point $x_0 \in X$.
Given the universal cover
$
\varpi\colon \widetilde{X} \longrightarrow X
$
 one has that $\pi_1(X, x_0)$ acts on
$\widetilde{X}$ as deck transformations. Consider all pairs of the form
$(\rho, \phi)$, where
$
\rho\colon \pi_1(X, x_0) \longrightarrow \text{SL}(r, {\mathbb C})
$
is a homomorphism and
\[
\phi\colon\ \widetilde{X} \longrightarrow {\mathbb C}{\mathbb P}^{r-1}
\]
is a holomorphic immersion such that
\begin{itemize}\itemsep=0pt
\item[(1)] $\phi$ is nondegenerate, and

\item[(2)] $\phi\circ\gamma = \rho(\gamma)\circ\phi$ for all $\gamma \in
\pi_1(X, x_0)$ (the standard action of $\text{SL}(r, {\mathbb C})$
on ${\mathbb C}{\mathbb P}^{r-1}$ is being used here).
\end{itemize}

\begin{Definition}\label{space}Two pairs $(\rho, \phi)$ and $(\rho_1, \phi_1)$ as above are
 called \textit{equivalent} if there is an element~$G \in
\text{SL}(r, {\mathbb C})$ such that
\begin{itemize}\itemsep=0pt
\item[(1)] $\rho_1(\gamma) = G^{-1}\rho(\gamma) G$ for all
$\gamma \in \pi_1(X, x_0)$, and

\item[(2)] $\phi_1 = G^{-1}\circ\phi$.
\end{itemize}
\end{Definition}

For any given $r \geq 2$, by ${\mathcal M}(r)$ we shall denote the space of all equivalence
classes of pairs~$(\rho, \phi)$ of the above type.
Recall that given another base point
$x'_0 \in X$, the fundamental group $\pi_1(X, x'_0)$ is identified with
$\pi_1(X, x_0)$ uniquely up to an inner automorphism. More precisely, a choice of
a homotopy class of paths between $x_0$ and $x'_0$ identifies $\pi_1(X, x_0)$
with $\pi_1(X, x'_0)$ and it also identifies $\widetilde{X}$ with the universal cover of $X$
corresponding to $x_0'$. From these it follows immediately
that ${\mathcal M}(r)$ does not depend on the choice of the base point $x_0$.
Moreover, it is known that ${\mathcal M}(r)$ is identified with the space of all ${\rm SL}(r,
{\mathbb C})$-opers on $X$. This identification is briefly described in the proof
of Proposition \ref{prop3}.

\subsection{Real structures} Consider as in previous sections an anti-holomorphic involution
\[
\sigma\colon\ X \longrightarrow X,
\]
and denote by
\begin{equation}\label{o0}
\sigma_*\colon\ \pi_1(X, x_0) \longrightarrow \pi_1(X, \sigma(x_0))
\end{equation}
 the homomorphism induced by $\sigma$. Using the homomorphism $(\sigma_*)^{-1}$, the
action of $\pi_1(X, x_0)$ on~$\widetilde{X}$ produces an action of
$\pi_1(X, \sigma(x_0))$ on $\widetilde{X}$. Let
\[
\varpi'\colon\ \widetilde{X}' \longrightarrow X
\]
be the universal covering corresponding to the point $\sigma(x_0)$. Note whilst that there
is no natural map between the universal covers $\widetilde{X}'$ and $\widetilde{X}$,
the self-map $\sigma$ of $X$ has a natural lift to the universal covering space
\begin{equation}\label{o-1}
\widetilde{\sigma}\colon\ \widetilde{X} \longrightarrow \widetilde{X}'.
\end{equation}
\begin{Remark}From the above constructions, one has the following:
\begin{itemize}\itemsep=0pt
\item[(1)] $\varpi'\circ \widetilde{\sigma} = \varpi$,

\item[(2)] $\widetilde{\sigma}$ takes the base point in $\varpi^{-1}(x_0)$ to the base point
in $(\varpi')^{-1}(\sigma(x_0))$,

\item[(3)] $\widetilde{\sigma}$ is an anti-holomorphic diffeomorphism, and

\item[(4)] $\widetilde{\sigma}$ is $\pi_1(X, \sigma(x_0))$-equivariant \big(recall that
$\pi_1(X, \sigma(x_0))$ acts on $\widetilde{X}$\big).
\end{itemize}
\end{Remark}
\begin{Lemma}\label{newinvo} The involution $\sigma$ on $X$ induces an involution $\Gamma$ on ${\mathcal M}(r)$ of all equivalence
classes of pairs $(\rho, \phi)$.
\end{Lemma}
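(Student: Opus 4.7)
The plan is to construct $\Gamma$ explicitly by combining the anti-holomorphic lift $\widetilde{\sigma}$ from \eqref{o-1} with the complex-conjugation involution $\sigma_0$ of ${\mathbb C}{\mathbb P}^{r-1}$ (the higher-dimensional analogue of the map in \eqref{e1}). Given a representative $(\rho,\phi)$ as in Definition \ref{space}, I would set
\[
\phi' := \sigma_0\circ\phi\circ\widetilde{\sigma}^{-1}\colon\ \widetilde{X}' \longrightarrow {\mathbb C}{\mathbb P}^{r-1},\qquad \rho'(\gamma) := \overline{\rho\bigl(\sigma_*^{-1}(\gamma)\bigr)}\in {\rm SL}(r,{\mathbb C}),
\]
for $\gamma \in \pi_1(X,\sigma(x_0))$. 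Both $\widetilde{\sigma}^{-1}$ and $\sigma_0$ are anti-holomorphic diffeomorphisms, so $\phi'$ is a holomorphic immersion; likewise, $\rho'$ is a group homomorphism because $\sigma_*$ (in \eqref{o0}) is an isomorphism and $A\mapsto\overline{A}$ is a group automorphism of ${\rm SL}(r,{\mathbb C})$. I would then set $\Gamma([(\rho,\phi)]) := [(\rho',\phi')]$, regarded as an element of $\mathcal{M}(r)$ via the base-point independence noted after Definition \ref{space}.

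The first step is to verify that $(\rho',\phi')$ lies in the class of pairs defining $\mathcal{M}(r)$. The $\pi_1(X,\sigma(x_0))$-equivariance of $\widetilde{\sigma}$ from the Remark preceding the lemma, together with the identity $\sigma_0\cdot A\cdot\sigma_0^{-1} = \overline{A}$ for the action on ${\mathbb C}{\mathbb P}^{r-1}$, turns the original relation $\phi\circ\sigma_*^{-1}(\gamma) = \rho(\sigma_*^{-1}(\gamma))\circ\phi$ into the desired $\phi'\circ\gamma = \rho'(\gamma)\circ\phi'$. Nondegeneracy is preserved because $\sigma_0$ sends hyperplanes bijectively to hyperplanes and $\widetilde{\sigma}^{-1}$ is a diffeomorphism, so the order of contact of the image with any hyperplane at any point is unchanged.

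Next I would check that the recipe descends to equivalence classes: if $(\rho_1,\phi_1) = (G^{-1}\rho G,\, G^{-1}\circ\phi)$ for some $G\in{\rm SL}(r,{\mathbb C})$, then substituting into the formulas and using $\sigma_0\circ G = \overline{G}\circ\sigma_0$ gives $(\rho_1',\phi_1') = \bigl(\overline{G}^{-1}\rho'\overline{G},\,\overline{G}^{-1}\circ\phi'\bigr)$, which is equivalent to $(\rho',\phi')$ via $\overline{G}$. For the involution property, I would iterate the recipe, letting $\widetilde{\sigma}'\colon \widetilde{X}'\to\widetilde{X}$ be the analogous base-point-preserving lift of $\sigma$ starting at $\sigma(x_0)$; the resulting pair is $\bigl(\overline{\overline{\rho\circ\sigma_*^{-2}(\cdot)}},\; \sigma_0^2\circ\phi\circ\widetilde{\sigma}^{-1}\circ(\widetilde{\sigma}')^{-1}\bigr)$, which reduces to $(\rho,\phi)$ because $\sigma_0^2 = {\rm Id}$, $\sigma_*^2 = {\rm Id}$, and $\widetilde{\sigma}'\circ\widetilde{\sigma}$ is the unique base-point-preserving lift of $\sigma\circ\sigma = {\rm Id}_X$ and hence equals ${\rm Id}_{\widetilde{X}}$. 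The main obstacle in all of this is the careful bookkeeping of the base-point swap $x_0\leftrightarrow\sigma(x_0)$ and of the two universal covers $\widetilde{X}$ and $\widetilde{X}'$ — the actual algebraic and geometric content is light, but one must invoke the base-point independence of $\mathcal{M}(r)$ cleanly at each stage so that the various pairs living over different base points are compared in the same moduli space.
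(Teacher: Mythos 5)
Your construction is exactly the paper's: the paper defines $\widehat{\rho}(\gamma)=\overline{\rho\bigl((\sigma_*)^{-1}(\gamma)\bigr)}$ and $\widehat{\phi}(y)=\overline{\phi\bigl(\widetilde{\sigma}^{-1}(y)\bigr)}$, which coincide with your $\rho'$ and $\phi'$ (your $\sigma_0$ being the coordinatewise conjugation implicit in the paper's overline), and declares the verifications "straightforward." Your proposal is correct and follows the same route, merely spelling out the equivariance, nondegeneracy, descent, and involution checks that the paper leaves to the reader.
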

\begin{proof}
In what follows we shall describe an involution on the space ${\mathcal M}(r)$ of all equivalence
classes of pairs $(\rho, \phi)$ constructed through Definition \ref{space}.
Given a pair
\[%\label{o1}
(\rho, \phi) \in {\mathcal M}(r),
\]
one can define a homomorphism
\begin{align}
\widehat{\rho}\colon\ \pi_1(X, \sigma(x_0)) &\longrightarrow \text{SL}(r, {\mathbb C}),\nonumber\\
\gamma &\mapsto \overline{\rho((\sigma_*)^{-1}(\gamma))},\label{o2}
\end{align}
where $\sigma_*$ is the homomorphism in
\eqref{o0}.
Moreover, one can also define the map
\begin{align}
\widehat{\phi}\colon\ \widetilde{X}' &\longrightarrow {\mathbb C}{\mathbb P}^{r-1},\nonumber\\
y&\mapsto\overline{\phi\bigl(\widetilde{\sigma}^{-1}(y)\bigr)}, \label{o3}
\end{align}
where $\widetilde{\sigma}$ is the map in \eqref{o-1}. Now it is straightforward to
check that the pair $\bigl(\widehat{\rho}, \widehat{\phi}\bigr)$ constructed in \eqref{o2}
and \eqref{o3} defines an element of ${\mathcal M}(r)$. Let
\begin{equation}\label{o4}
\Gamma\colon\ {\mathcal M}(r) \longrightarrow {\mathcal M}(r)
\end{equation}
be the map that sends any pair $(\rho, \phi)$ to $\bigl(\widehat{\rho}, \widehat{\phi}\bigr)$
constructed from it in \eqref{o2} and \eqref{o3}. It is evident that $\Gamma$ is an involution.\end{proof}

As mentioned before,
${\mathcal M}(r)$ is identified with the space of all
${\rm SL}(r, {\mathbb C})$-opers on $X$. Hence, the involution $\Gamma$ constructed in Lemma \ref{newinvo} via \eqref{o4}
is an involution of ${\rm OSL}_X(r)$ (see Definition~\ref{deo}) and it can be understood in terms of the involutions studied in previous sections.

\begin{Proposition}\label{prop3}
The involution $\Gamma$ of ${\rm OSL}_X(r)$ coincides with the involution
$\beta$ constructed in Lemma {\rm \ref{lem3}}.
\end{Proposition}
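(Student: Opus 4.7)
The plan is to check that the two involutions agree after explicitly identifying $\mathcal M(r)$ with ${\rm OSL}_X(r)$, and then tracking the behaviour of monodromy and developing map under the basic operation $D \longmapsto \sigma^*_X\overline{D}$ on flat connections.

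First, I would recall the identification $\mathcal M(r) \xrightarrow{\sim} {\rm OSL}_X(r)$ in the direction that is convenient here. Starting from a pair $(\rho, \phi) \in \mathcal M(r)$, form the flat holomorphic $\operatorname{SL}(r,\mathbb C)$-bundle $(E_\rho, \nabla_\rho) = \widetilde{X}\times_\rho \mathbb C^r$ on $X$. The equivariant nondegenerate immersion $\phi\colon \widetilde X \to \mathbb C\mathbb P^{r-1}$ descends to a holomorphic line subbundle $W_1 \subset E_\rho$, and its osculating filtration $W_1 \subset W_2 \subset \cdots \subset W_r = E_\rho$ (well-defined precisely because $\phi$ is nondegenerate) is the oper filtration. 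The $\operatorname{SL}(r,\mathbb C)$-equivalence of Definition \ref{space} corresponds to isomorphism of flat bundles with filtration, and the underlying holomorphic bundle is canonically isomorphic to ${\rm Sym}^{r-1}\bigl(J^1(\mathcal L)\bigr)$ by the standard identification fixed in Remark \ref{rem-th} (using the theta characteristic $\mathcal L$). In this way a point of $\mathcal M(r)$ is sent to a connection $D$ in $\mathcal C_X(r)$, and passing to $\operatorname{Aut}$-orbits gives the bijection with ${\rm OSL}_X(r)$.

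Second, I would check the naturality statement at the level of flat bundles. For a holomorphic connection $D$ on a holomorphic bundle $V$, the connection $\sigma^*_X\overline{D}$ on $\sigma^*_X\overline{V}$ has, by definition of the conjugate and pullback, sheaf of flat sections equal to the complex conjugate of the pullback under $\sigma_X$ of the sheaf of flat sections of $D$. Equivalently, on universal covers, if $\widetilde D$ is the trivialisation of $D$ given by a choice of basepoint and if $\phi_{D}$ denotes the associated equivariant developing map, then the developing map associated to $\sigma^*_X\overline{D}$ is exactly $\overline{\phi_D\circ \widetilde\sigma^{-1}}$, and the monodromy is $\overline{\rho\circ \sigma_*^{-1}}$. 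These are precisely the formulas $\widehat{\rho}$ and $\widehat{\phi}$ of \eqref{o2} and \eqref{o3}, so $\Gamma$ corresponds to the operation $D \longmapsto \sigma^*_X\overline{D}$ on flat $\operatorname{SL}(r,\mathbb C)$-bundles with their oper filtration (the filtration is sent to itself because $\phi$ being taken to $\widehat\phi$ preserves the osculating flag under complex conjugation).

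Third, I would compare this with $\beta$. By construction \eqref{f6}, $\gamma(D)$ is the pullback $\operatorname{Sym}^{r-1}(\widehat\sigma^1_1)^* \sigma^*_X\overline{D}$, and $\beta$ is its descent to ${\rm OSL}_X(r)$. The isomorphism $\operatorname{Sym}^{r-1}(\widehat\sigma^1_1)$ in \eqref{f5} is a holomorphic identification of the oper bundles $\operatorname{Sym}^{r-1}\bigl(J^1(\mathcal L)\bigr)$ and $\sigma^*_X\overline{\operatorname{Sym}^{r-1}\bigl(J^1(\mathcal L)\bigr)}$ that, by construction, respects the natural jet filtration (this is the content of \eqref{dj2} combined with the fact that $\widehat\sigma^1_1$ is obtained from $\widehat\sigma$). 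Consequently $\gamma(D)$ represents the same flat $\operatorname{SL}(r,\mathbb C)$-bundle with filtration as $\sigma^*_X\overline D$, only transported back to live on the fixed model bundle; passing to ${\rm OSL}_X(r)$ kills this choice of model and $\beta$ becomes exactly the operation ``take $(E,\nabla,\text{filtration})$ to $(\sigma^*_X\overline E, \sigma^*_X\overline{\nabla}, \sigma^*_X\overline{\text{filtration}})$''. Combining with the previous step yields $\Gamma = \beta$.

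The main obstacle is the third step: one must verify that $\widehat\sigma^1_1$ and more generally the isomorphisms $\widehat\sigma^m_k$ of \eqref{s2} are compatible with the jet filtrations and with the identification of the oper bundle with $J^{r-1}(\mathcal L^{r-1})$ used to construct the developing map via $\widetilde\psi$ in \eqref{psn}. This is essentially the statement $\widetilde\psi\circ \operatorname{Sym}^{r-1}(\widehat\sigma^1_1) = \widehat\sigma^{r-1}_{r-1}\circ \widetilde\psi$ already established in \eqref{a1}, so once this compatibility is invoked the identification of $\gamma(D)$ with $\sigma^*_X\overline D$ (after forgetting the trivialisation) is formal, and the match between $\Gamma$ and $\beta$ follows.
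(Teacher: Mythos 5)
Your plan follows essentially the same route as the paper: make the identification ${\rm OSL}_X(r)\cong{\mathcal M}(r)$ explicit via the developing map/monodromy construction, observe that $\Gamma$ corresponds to $D\longmapsto\sigma^*_X\overline{D}$ on flat filtered bundles, and use the filtration-compatibility of ${\rm Sym}^{r-1}\bigl(\widehat{\sigma}^1_1\bigr)$ (as in \eqref{a1}) to see that $\gamma$ is the same operation transported to the fixed model bundle. In fact you supply more detail than the paper, which after setting up the identification simply declares the coincidence ``straightforward to deduce''; your second and third steps are a correct fleshing-out of that deduction.
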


\begin{proof}
This is deduced by examining the identification between ${\rm OSL}_X(r)$ and
${\mathcal M}(r)$. The map ${\mathcal M}(r) \longrightarrow {\rm OSL}_X(r)$
is constructed as follows. Take any pair $(\rho, \phi)$ giving an element of
${\mathcal M}(r)$. Let
\[
E_0 = {\mathbb C}{\mathbb P}^{r-1}\times {\mathbb C}^r \longrightarrow
{\mathbb C}{\mathbb P}^{r-1}
\]
be the trivial holomorphic vector bundle equipped with the unique holomorphic
connection $D_0$ on $E_0$ (it is the trivial connection). The vector bundle
$E_0$ has a tautological holomorphic line subbundle
\begin{equation}\label{l1}
{\mathcal L} = {\mathcal O}_{{\mathbb C}{\mathbb P}^{r-1}}(-1) \subset E_0,
\end{equation}
whose fiber over any $z \in {\mathbb C}{\mathbb P}^{r-1}$ is the line in
${\mathbb C}^r$ represented by $z$.

The homomorphism $\rho\colon
\pi_1(X, x_0) \longrightarrow \text{SL}(r, {\mathbb C})$ and the standard
action of $\text{SL}(r, {\mathbb C})$ on ${\mathbb C}{\mathbb P}^{r-1}$ together
produce an action of $\pi_1(X, x_0)$ on ${\mathbb C}{\mathbb P}^{r-1}$. Consider
the diagonal action of $\pi_1(X, x_0)$ on the pulled back vector bundle
\[
\phi^*E_0 = \widetilde{X}\times {\mathbb C}^r \longrightarrow \widetilde{X}.
\]
This action of $\pi_1\!(X, x_0)$ on $\phi^*E_0$ preserves the holomorphic
line subbundle $\phi^*{\mathcal L} \!\subset \!\phi^*E_0$ (see~\eqref{l1}). Moreover,
the action of $\pi_1(X, x_0)$ on $\phi^*E_0$ preserves the pulled back
connection $\phi^* D_0$. Therefore, one can define the vector bundle
\begin{equation}\label{l2}
{\mathcal V} := (\phi^*E_0)/\pi_1(X, x_0) \longrightarrow
\widetilde{X}/\pi_1(X, x_0) = X,
\end{equation}
which is a holomorphic vector bundle equipped with
\begin{itemize}\itemsep=0pt
\item a holomorphic connection $D$ induced by $\phi^* D_0$, and

\item a holomorphic line subbundle
\[%\label{l3}
\widetilde{\mathcal L} := (\phi^*{\mathcal L})/\pi_1(X, x_0) \subset
(\phi^*E_0)/\pi_1(X, x_0) = {\mathcal V}.
\]
\end{itemize}
The holomorphic section of the line bundle
$\bigwedge^r (\phi^*E_0) = \widetilde{X}\times{\mathbb C} \longrightarrow
\widetilde{X}$ given by the constant function $1$ on $\widetilde{X}$ is preserved
by the action of $\pi_1(X, x_0)$ on $\bigwedge^r (\phi^*E_0)$ induced by the action of
$\pi_1(X, x_0)$ on $\phi^*E_0$. Furthermore, this constant section is preserved by the
connection on~$\bigwedge^r (\phi^*E_0)$ induced by the connection
$\phi^* D_0$ on $\phi^*E_0$. Consequently, the vector bundle ${\mathcal V}$ in~\eqref{l2} defines a
principal $\text{SL}(r, {\mathbb C})$-bundle on $X$, and $D$ is
a holomorphic connection on this principal $\text{SL}(r, {\mathbb C})$-bundle. From the
non-degeneracy condition of the map $\phi$ it can be deduced that the
line subbundle $\widetilde{\mathcal L}$ produces an $\text{SL}(r, {\mathbb C})$-oper
structure on this flat principal $\text{SL}(r, {\mathbb C})$-bundle. This way we obtain the
map ${\mathcal M}(r) \longrightarrow {\rm OSL}_X(r)$.

{\samepage The converse map ${\rm OSL}_X(r) \longrightarrow {\mathcal M}(r)$ is
fairly straightforward. Let $F$ be a holomorphic vector bundle of rank $r$ on $X$
such that the line bundle $\bigwedge^r F$ is holomorphically trivial, and let~$D$ be
a holomorphic connection on $F$ such that the induced connection on $\bigwedge^r F$
is the trivial one. Let ${\mathbb L} \subset F$ be a holomorphic line subbundle
such that the triple $(F, D, {\mathbb L})$ defines an~$\text{SL}(r, {\mathbb C})$-oper
on $X$. Let
\begin{equation}\label{l4}
\rho\colon\ \pi_1(X, x_0) \longrightarrow \text{SL}(F_{x_0})
\end{equation}
be the monodromy representation for $D$.}

Since $\widetilde X$ is simply connected, the pulled back connection
$(\varpi^*F, \varpi^*D)$ is trivializable, where~$\varpi$ is the projection of
$\widetilde X$ to $X$. Let $\widetilde{x}_0 \in \widetilde X$ be the base point
over the chosen base point $x_0 \in X$. So~$\varpi^*F$ is identified with the
trivial vector bundle ${\widetilde X}\times F_{x_0} \longrightarrow {\widetilde X}$
and $\varpi^*D$ is the trivial connection on it. The line subbundle
\[
\varpi^* {\mathbb L} \subset \varpi^* F = {\widetilde X}\times F_{x_0}
\]
produces a map
\[
\psi\colon\ {\widetilde X} \longrightarrow {\mathbb P}(F_{x_0});
\]
the map $\psi$ sends any $z \in {\widetilde X}$ to the line
$(\varpi^* {\mathbb L})_z \subset (\varpi^* F)_z = F_{x_0}$. Now the
pair $(\rho, \psi)$, where $\rho$ is the homomorphism in \eqref{l4}, defines an element of ${\mathcal M}(r)$.
This construction gives the opposite map ${\rm OSL}_X(r) \longrightarrow {\mathcal M}(r)$.

Using the above isomorphism ${\rm OSL}_X(r) \stackrel{\sim}{\longrightarrow} {\mathcal M}(r)$
it is straightforward to deduce that the involution $\Gamma$ of ${\rm OSL}_X(r)$
coincides with the involution $\beta$ constructed in Lemma \ref{lem3}.
\end{proof}

Proposition \ref{prop3}, Theorem \ref{prop2} and Proposition \ref{lem4}
together give the following.

\begin{Theorem}\label{cor3}
The four involutions $\Gamma$, $\beta$, $\mathcal B$ and ${\mathcal A}$ of
${\rm OSL}_X(r)$ coincide.
\end{Theorem}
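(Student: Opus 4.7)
The plan is very short: Theorem \ref{cor3} is essentially a bookkeeping statement that assembles three results proved earlier in the paper, and the strategy is to transport each of the four involutions to a common one (which I will take to be $\beta$) and then quote the relevant identifications.

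First I would observe that Theorem \ref{prop2} already identifies $\mathcal B\big\vert_{{\rm OSL}_X(r)}$ with $\beta$ by comparing the $\widetilde{\psi}$-construction of the differential operator attached to a connection on ${\rm Sym}^{r-1}\bigl(J^1(\mathcal L)\bigr)$ with the effect of $\sigma_X$-conjugation through the isomorphisms $\widehat{\sigma}^{m}_{k}$. Next, Proposition \ref{lem4} identifies $\mathcal A = \Psi^{-1}\circ \mathcal A_1 \circ \Psi$ with both $\beta$ and $\mathcal B\big\vert_{{\rm OSL}_X(r)}$, using the fact that $\Psi$ extracts the $\delta_0$-part and the traces $\operatorname{trace}\bigl((\theta_\delta)^i\bigr)$ and that $\sigma_X$ acts on each factor as expected. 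Finally, Proposition \ref{prop3} identifies $\Gamma$ with $\beta$ via the equivalence ${\rm OSL}_X(r)\simeq \mathcal M(r)$ and the compatibility of the monodromy and equivariant embedding descriptions with $\sigma_X$. Chaining these three identifications gives $\Gamma=\beta=\mathcal B\big\vert_{{\rm OSL}_X(r)}=\mathcal A$.

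Concretely, the proof itself will be a single short paragraph: ``By Theorem \ref{prop2}, the restriction of $\mathcal B$ to ${\rm OSL}_X(r)$ equals $\beta$. By Proposition \ref{lem4}, $\mathcal A$ also equals $\beta$ (and hence $\mathcal B\big\vert_{{\rm OSL}_X(r)}$). By Proposition \ref{prop3}, $\Gamma$ equals $\beta$. Combining these, the four involutions coincide.'' There is no real obstacle here, since all of the substantive content has been absorbed into the three preceding results; the only thing to be careful about is to make sure the identifications are compatible with the fixed choices (the theta characteristic $K_X^{1/2}$, the isomorphism $\widehat\sigma$ in \eqref{f4}, the lift $\widetilde\sigma$ in \eqref{o-1}, and the base point $x_0$) so that the composition of identifications makes sense on the nose rather than only up to isomorphism.

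The main (but still routine) verification to double-check is the compatibility at the level of the auxiliary data: the isomorphism ${\rm OSL}_X(r)\simeq \mathcal M(r)$ used in Proposition \ref{prop3} goes through the monodromy of the connection on ${\rm Sym}^{r-1}\bigl(J^1(\mathcal L)\bigr)$ together with the tautological line subbundle $\mathcal O_{\mathbb{CP}^{r-1}}(-1)$, while the identification used in Theorem \ref{prop2} and Proposition \ref{lem4} goes through the differential operator $\delta$ and the map $\widetilde\psi$ of \eqref{psn}. Both identifications intertwine the $\sigma_X$-action in the same way because $\widehat\sigma^{1}_{1}$ and $\widetilde\sigma$ are lifts of the same involution $\sigma_X$ of $X$, and this is precisely what was verified in the proofs of those earlier results. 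With that noted, Theorem \ref{cor3} follows immediately.
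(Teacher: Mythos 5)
Your proposal is correct and matches the paper exactly: the paper derives Theorem~\ref{cor3} by simply combining Proposition~\ref{prop3} ($\Gamma=\beta$), Theorem~\ref{prop2} ($\mathcal B\big\vert_{{\rm OSL}_X(r)}=\beta$), and Proposition~\ref{lem4} ($\mathcal A=\beta$), just as you do. Your extra remark about compatibility of the auxiliary choices is a reasonable point of care but is already absorbed into the proofs of those three earlier results.
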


\subsection*{Acknowledgements}

We thank the referees for their valuable comments.
The work of LPS is partially supported by a Simons Fellowship, NSF CAREER Award DMS 1749013 and NSF FRG Award 2152107. The material
presented here is partially based upon work supported by the National Science Foundation under Grant No. DMS-1928930 while LPS was in
residence at the Mathematical Sciences Research Institute in Berkeley, California, during the Fall 2022 semester.

\pdfbookmark[1]{References}{ref}
\LastPageEnding

\end{document}